\documentclass[12pt,a4paper]{article}
\usepackage{hyperref}
\usepackage{tikz}
\usetikzlibrary{calc}
\usetikzlibrary{arrows}
\usepackage{amsmath}
\usepackage{amssymb}
\usepackage{amsthm}
\usepackage{framed}
\usepackage{multicol}
\usepackage[OT2,T1]{fontenc}
\usepackage{graphicx}
\usepackage[all]{xy}
\usepackage{color}
\usepackage{mathrsfs}
\usepackage{mathtools}

\setlength\topmargin{-0.3in}
\addtolength{\oddsidemargin}{-.625in}
	\addtolength{\evensidemargin}{-.625in}
	\addtolength{\textwidth}{1.25in}
\setlength\footskip{1in}

\numberwithin{equation}{section}

\newcommand{\sm}{\wedge}
\newcommand{\cof}{\mathrm{cof}}
\newcommand{\im}{\mathrm{im}}

\newcommand{\Mat}{\mathrm{Mat}}
\newcommand{\colim}{\mathrm{colim}}
\newcommand{\Hom}{\mathrm{Hom}}

\newcommand{\Sp}{\mathrm{Sp}}

\newcommand{\F}{\mathbb{F}}
\newcommand{\Z}{\mathbb{Z}}
\newcommand{\R}{\mathbb{R}}
\newcommand{\T}{\mathcal{T}}

\newcommand{\GL}{\mathrm{GL}}

\newcommand{\BP}{\mathrm{BP}}

\newcommand{\Aff}{\mathrm{Aff}}

\newcommand{\Po}{\mathcal{P}}
\newcommand{\Br}{\mathbf{B}}
\newcommand{\St}{\mathrm{St}}
\newcommand{\cd}{\mathrm{cd}}
\newcommand{\Pri}{\mathrm{Pr}}

\newcommand{\Top}{\mathrm{Top}}

\newcommand{\hocolim}{\mathrm{hocolim}}
\newcommand{\rreg}{\overline{\rho}}

\newcommand{\fun}{\mathrm{Fun}}
\newcommand{\cyc}{\mathrm{Cyc}}
\newcommand{\proj}{\mathrm{proj}}

\usepackage[utf8]{inputenc}

\title{Symmetric Powers and Eilenberg Maclane Spectra}
\author{Krishanu Sankar}
\date{}
\begin{document}
\maketitle
\begin{abstract}
We filter the equivariant Eilenberg Maclane spectrum $H\underline{\F}_p$ using the mod $p$ symmetric powers of the equivariant sphere spectrum, $\Sp_{\Z/p}^{\infty}(\Sigma^{\infty G}S^0)$. When $G$ is a $p$-group, we show that the layers in the filtration are the Steinberg summands of the equivariant classifying spaces of $(\Z/p)^n$ for $n=0, 1, 2, \ldots$. We show that the layers of the filtration split after smashing with $H\underline{\F}_p$. Along the way, we produced a general computation of the geometric fixed points of $H\underline{\Z}$ and $H\underline{\F}_p$ by using symmetric powers.
\end{abstract}
\newtheorem{theorem}{Theorem}
\newtheorem{lemma}[theorem]{Lemma}
\newtheorem{proposition}[theorem]{Proposition}
\newtheorem{corollary}[theorem]{Corollary}
\newtheorem{conjecture}[theorem]{Conjecture}

\theoremstyle{definition}
\newtheorem{definition}[theorem]{Definition}
\newtheorem{question}[theorem]{Question}
\newtheorem{comment}[theorem]{Comment}
\newtheorem{example}[theorem]{Example}

\theoremstyle{theorem}
\newtheorem{exe}{Proposition}

\section{Introduction}

For any abelian group $A$ and a pointed space $X$ let $A\otimes X$ be the space of finite $A$-linear combinations of points on $X$, with addition given by concatenation and the basepoint treated as 0. For any connected, pointed space $X$, the infinite symmetric power $\Sp^{\infty}(X)$ is weakly equivalent to $\Z\otimes X$, and so we write $\Sp_{\Z/p}^{\infty}(X):= \Z/p\otimes X$ for the mod $p$ symmetric powers.

The Eilenberg--Maclane space $K(A,\ell)$ is given by the space $A\otimes S^{\ell}$ (\cite{MC}). Stabilizing in $\ell$ gives the Eilenberg-Maclane spectrum $HA$ as the spectrum of $A$-linear combinations of points on the sphere spectrum $\Sigma^{\infty}S^0$. When the abelian group $A$ is a ring, we obtain the ring structure on $HA$ by linearly extending the product map $S^i\sm S^j \simeq S^{i+j}$.

Let $p$ be a prime, and let us specialize to the case $A=\Z/p=\F_p$, thought of as a ring. Henceforth, all spectra are understood to be $p$-localized. Then as above there is an equivalence of spectra $\Sp_{\Z/p}^{\infty}(\Sigma^{\infty}S^0)\simeq H\F_p$. The mod $p$ infinite symmetric power model has a filtration by finite symmetric powers. Let $\Sp_{\Z/p}^n(\Sigma^{\infty}S^0)\subset \Sp_{\Z/p}^{\infty}(\Sigma^{\infty}S^0)$ be the subspectrum
$$\Sp_{\Z/p}^n(\Sigma^{\infty}S^0)=\{a_1x_1 + \ldots+a_mx_m: a_i\in \{1, \ldots, p-1\}, x_i\in \Sigma^{\infty}S^0, \; \sum\limits_{i=1}^{m}a_i\le p^n\}.$$
The product restricts as $\Sp_{\Z/p}^{p^i}(\Sigma^{\infty}S^0)\sm \Sp_{\Z/p}^{p^j}(\Sigma^{\infty}S^0) \rightarrow \Sp_{\Z/p}^{p^{i+j}}(\Sigma^{\infty}S^0)$, and we consider the filtration of spectra
$$\Sigma^{\infty}S^0=\Sp_{\Z/p}^1(\Sigma^{\infty}S^0) \subset \Sp_{\Z/p}^p(\Sigma^{\infty}S^0)\subset \Sp_{\Z/p}^{p^2}(\Sigma^{\infty}S^0)\subset\cdots\subset \Sp_{\Z/p}^{\infty}(\Sigma^{\infty}S^0)\simeq H\F_p.$$
The above filtration was studied by Mitchell--Priddy, and (\cite{MP}, Theorem A) identifies the $n$-th layer of the filtration as the $n$-th suspension of the Steinberg summand of the classifying space $B(\Z/p)^n$. That is, there is an equivalence of spectra
$$\Sigma^n e_nB(\Z/p)^n_+ \simeq \Sp_{\Z/p}^{p^n}(\Sigma^{\infty}S^0)/\Sp_{\Z/p}^{p^{n-1}}(\Sigma^{\infty}S^0),$$
where $e_n$ is the topological idempotent corresponding to the idempotent element $e_n$ in the group ring $\Z_{(p)}[\GL_n(\F_p)]$. The aim of the present two-part paper is to prove an equivariant generalization of the theorem of Mitchell--Priddy.
\begin{theorem}\label{thm:maintheorem}
Let $G$ be a finite $p$-group. For each positive integer $n$, let $\Sp_{\Z/p}^n(\Sigma^{\infty G}S^0)$ be the $n$-th mod $p$ symmetric power on the genuine $G$-spectrum $\Sigma^{\infty G}S^0$. Then there is an equivalence of genuine $G$-spectra
$$\Sigma^n \mathbf{e}_nB_G(\Z/p)^n_+ \simeq \Sp_{\Z/p}^{p^n}(\Sigma^{\infty G}S^0)/\Sp_{\Z/p}^{p^{n-1}}(\Sigma^{\infty G}S^0).$$
\end{theorem}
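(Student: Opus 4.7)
My plan is to construct a natural comparison map
\[
\Sigma^n \mathbf{e}_n B_G(\Z/p)^n_+ \longrightarrow \Sp_{\Z/p}^{p^n}(\Sigma^{\infty G}S^0)/\Sp_{\Z/p}^{p^{n-1}}(\Sigma^{\infty G}S^0)
\]
and then verify that it is an equivalence of genuine $G$-spectra by checking the induced map on $\Phi^H$ for every subgroup $H \le G$, reducing each such check to a tom~Dieck-style summand decomposition of both sides together with the non-equivariant theorem of Mitchell--Priddy.

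The map itself comes from the $E_\infty$-ring structure on $\Sigma^{\infty G}S^0$, which supplies an extended-power map $(\Sigma^{\infty G}S^0)^{\wedge p^n}_{h\Sigma_{p^n}} \to \Sp^{p^n}_{\Z/p}(\Sigma^{\infty G}S^0)$. Restricting the $\Sigma_{p^n}$-action along the regular embedding $(\Z/p)^n \hookrightarrow \Sigma_{p^n}$ and observing that any element whose $(\Z/p)^n$-orbit on $\{1,\ldots,p^n\}$ is non-free hits a formal sum of total weight $<p^n$ (and hence lies in $\Sp^{p^{n-1}}_{\Z/p}$), the composite with the quotient factors through a map out of the genuine $G$-spectrum $\Sigma^n B_G(\Z/p)^n_+$; the trivial suspension factor $\Sigma^n$ tracks the collapse of the fixed line in the regular permutation representation. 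Applying the topological idempotent $\mathbf{e}_n$, acting on the source through $\GL_n(\F_p)=\Aut((\Z/p)^n)$, produces the candidate map.

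To check that $\Phi^H$ of this map is an equivalence for every $H \le G$, one uses that $\Phi^H$ commutes with cofiber sequences and with $\Z/p$-linear symmetric powers, the latter through the formula expressing $\Phi^H\Sp^k_{\Z/p}$ as a wedge indexed by $H$-orbit types on the labelling set (this is where the geometric-fixed-point computation alluded to in the abstract enters). On the symmetric-power side the $\Phi^H$ of the cofiber then decomposes as a wedge indexed by conjugacy classes of homomorphisms $\rho:H\to(\Z/p)^n$, and on the classifying-space side there is a parallel tom~Dieck splitting of $\Phi^H B_G(\Z/p)^n_+$ labelled by the same set. Matching summand by summand and invoking non-equivariant Mitchell--Priddy identifies each piece as the Steinberg summand of the appropriate residual classifying space; induction on $n$ with the tautological base case $n=0$ closes the argument.

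The principal obstacle I anticipate is the Steinberg-idempotent bookkeeping in this summand matching: one must show that restriction of $\mathbf{e}_n$ along the parabolic in $\GL_n(\F_p)$ stabilising a given rank-$r$ homomorphism $\rho:H\to(\Z/p)^n$ reduces, up to a $p$-local unit, to the Steinberg idempotent of the corresponding Levi factor, so that non-equivariant Mitchell--Priddy applies off the shelf on each summand. Morally this is a statement about restriction along parabolic inclusions in the Tits building of $\GL_n(\F_p)$, but implementing it together with tracking the trivial suspension factor $\Sigma^n$ through the geometric-fixed-point decomposition is where the equivariant argument does genuinely new work beyond the non-equivariant one.
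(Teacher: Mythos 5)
Your overall architecture (build a comparison map, test it on geometric fixed points, decompose both sides into wedge summands, and feed in nonequivariant Mitchell--Priddy) is the same as the paper's, but two of your specific mechanisms do not work as stated. First, the suspension bookkeeping in your construction of the map is off: the fixed subspace of the regular permutation representation of $(\Z/p)^n$ on $\R^{p^n}$ is one-dimensional, so ``collapsing the fixed line'' accounts for a single trivial suspension, not $\Sigma^n$. In the paper the $n$ trivial suspensions arise only because the map is the $n$-fold product of the first-layer equivalence, each smash factor of $M_G(1)\simeq \mathbf{e}_1B_G(\Z/p)_+$ contributing one suspension; and that first layer is itself a nontrivial geometric theorem, $\Sp_{\Z/p}^p(\Sigma^{\infty G}S^0)/\Sp^1(\Sigma^{\infty G}S^0)\simeq \Sigma\,\Sigma^{\infty G}(B_G\Aff_1)_+$, proved via $U(\infty\rho_G\otimes\overline{\R^p})=E_G\mathcal{F}$ and the $p$-local equivalences $E_G\mathcal{F}/\Sigma_p\simeq B_G\Sigma_p\simeq B_G\Aff_1$ on \emph{all} fixed point spaces (a graph-subgroup analysis, Lemma \ref{lemma:fixedpointsofaquotient} and Proposition \ref{prop:affsigma}). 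Your ``tautological base case $n=0$'' supplies none of this; the indispensable base case is $n=1$, and equivariantly it is not formal.

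The more serious gap is in the verification step. The decomposition of $\Phi^G$ of the symmetric-power side is a wedge over subgroups $H$ with $G/H$ elementary abelian, with $H$-summand $M(n-d(H))\sm\Sigma^{1-d(H)}\Po(G)_{\supset H}^{\Diamond}\sm B(G/H)_+$ --- note the subgroup-complex (Tits building) factor, which your plan never accounts for but which is exactly what must match the building factor $\Br_{d(H)}^{\Diamond}$ appearing in the fixed points of the Steinberg summand. More importantly, the comparison map does \emph{not} respect these decompositions block-diagonally, so ``matching summand by summand and invoking Mitchell--Priddy off the shelf'' is not available. What actually has to be proved is (i) that the components of the product corresponding to \emph{nontransverse} pairs of subgroups vanish on $\F_p$-homology --- the delicate point handled by the cyclic-power construction and Lemma \ref{lemma:zeroonhomology} / Proposition \ref{prop:nontransversecase} --- and (ii) a linear-algebra statement (Proposition \ref{prop:Steinbergcomposition}) that the composite $e_nA\hookrightarrow A\xrightarrow{\proj_{\T(V)}}A\rightarrow e_nA$, over matrices with fixed nullspace, is injective; these two facts together are what force the resulting ``triangular'' map on homology to be an isomorphism. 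Your anticipated lemma --- that restricting $\mathbf{e}_n$ along the parabolic stabilizing $\rho$ reduces, up to a unit, to the Steinberg idempotent of the Levi --- is not the statement that is needed and gives no mechanism for killing or controlling the cross terms; without an argument of type (i) and (ii), the summand-matching step of your plan does not close.
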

The Steinberg summand construction $e_n$, and the $G$-equivariant classifying space construction $B_G\Z/p$ are both discussed in the companion paper, \cite{Sankar}. The $G$-equivariant classifying spaces $B_G\Z/p$ fit into a theory of equivariant principal bundles (\cite{GMM}), but in our situation they can be built explicitly as lens spaces (See \ref{example:equivariantlensspace}). The symmetric powers are topological in nature, while the Steinberg summand is algebraic, and Theorem \ref{thm:maintheorem} ties these two constructions together.

Note that by work of Lima-Filho and Dos Santos (\cite{LF}, \cite{DS}), there is an equivalence equivalence of $G$-spectra $\Sp_{\Z/p}^{\infty}(\Sigma^{\infty G}S^0) \simeq H\underline{\F}_p$. We also prove the following equivariant generalization of a well-known fact shown by Mitchell--Priddy.
\begin{theorem}\label{thm:eqsplitting}
Let $G$ be any finite $p$ group. The filtration $\{\Sp_{\Z/p}^{p^n}(\Sigma^{\infty G}S^0)\}_{n\ge 0}$ splits into its layers after smashing with $H\underline{\F}_p$. That is, there is an equivalence of $H\underline{\F}_p$-modules
$$H\underline{\F}_p\sm H\underline{\F}_p\simeq \bigvee\limits_{n\ge 0}H\underline{\F}_p\sm \Sigma^n \mathbf{e}_nB_G(\Z/p)^n_+.$$
\end{theorem}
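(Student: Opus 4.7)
The plan is to prove Theorem \ref{thm:eqsplitting} by induction on $n$, showing that at each stage the cofiber sequence coming from Theorem \ref{thm:maintheorem}
$$H\underline{\F}_p\wedge\Sp_{\Z/p}^{p^{n-1}}(\Sigma^{\infty G}S^0)\to H\underline{\F}_p\wedge\Sp_{\Z/p}^{p^n}(\Sigma^{\infty G}S^0)\to H\underline{\F}_p\wedge\Sigma^n\mathbf{e}_nB_G(\Z/p)^n_+$$
splits as $H\underline{\F}_p$-modules. Under the inductive hypothesis that $H\underline{\F}_p\wedge\Sp_{\Z/p}^{p^{n-1}}$ has already been identified with $\bigvee_{i<n}H\underline{\F}_p\wedge\Sigma^i\mathbf{e}_iB_G(\Z/p)^i_+$, splitting at stage $n$ is equivalent to the vanishing of the connecting map
$$\delta_n\colon\Sigma^{n-1}\mathbf{e}_nB_G(\Z/p)^n_+\longrightarrow\bigvee_{i<n}H\underline{\F}_p\wedge\Sigma^i\mathbf{e}_iB_G(\Z/p)^i_+,$$
which decomposes into components $\delta_{n,i}\in[\Sigma^{n-1-i}\mathbf{e}_nB_G(\Z/p)^n_+,\,H\underline{\F}_p\wedge\mathbf{e}_iB_G(\Z/p)^i_+]_G$ for $i<n$, each of which must be shown to be null.

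I would first try a cohomological approach, computing these mapping groups via the explicit lens-space model of $B_G\Z/p$ from \cite{Sankar} and the action of $\GL_n(\F_p)$ through the Steinberg idempotent. Non-equivariantly, the required vanishing is essentially the Mitchell--Priddy vanishing coming from the structure of the Dickson invariants, and equivariantly one expects the analogue to be visible through the $RO(G)$-graded Mackey-functor cohomology of the equivariant classifying spaces. A parallel, more direct strategy would exploit the multiplicative structure $\Sp_{\Z/p}^{p^i}\wedge\Sp_{\Z/p}^{p^j}\to\Sp_{\Z/p}^{p^{i+j}}$ on the filtration together with $\mathbf{e}_n$ and the unit map $S^0\to H\underline{\F}_p$ to build a retraction $\Sigma^n\mathbf{e}_nB_G(\Z/p)^n_+\to H\underline{\F}_p\wedge\Sp_{\Z/p}^{p^n}$ of the cofiber map directly, bypassing the vanishing computation altogether.

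The main obstacle will be controlling the attaching maps in the equivariant setting. Non-equivariantly, Mitchell--Priddy can rule out $\delta_n$ using the fact that the Steinberg summand $e_nB(\Z/p)^n_+$ has its lowest cell in dimension $\binom{n}{2}$, so that crude connectivity bounds force vanishing. Equivariantly, connectivity is $RO(G)$-graded and the Mackey functor structure of $\underline{\F}_p$ supports classes in virtual degrees well below what the underlying spectrum would predict, so the naive dimension count does not suffice. I expect the geometric fixed-point computation of $H\underline{\F}_p$ announced in the abstract to be decisive here: it should let one reduce the vanishing of $\delta_n$ to its underlying non-equivariant class (handled by Mitchell--Priddy) together with $\Phi^H\delta_n$ for each nontrivial subgroup $H\le G$, which can then be handled inductively on $|G|$ using Theorem \ref{thm:maintheorem} for smaller $p$-groups.
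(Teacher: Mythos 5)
Your primary strategy (computing the connecting maps $\delta_{n,i}$ and proving they vanish) has a genuine gap at the step where you propose to reduce the vanishing of $\delta_n$ to its underlying nonequivariant class plus $\Phi^H\delta_n$ for the nontrivial subgroups $H$. Geometric fixed points (together with the underlying spectrum) detect \emph{equivalences} of genuine $G$-spectra, but they do not detect whether a map is null: for example, the $RO(G)$-graded homotopy of $H\underline{\F}_p$ contains ``negative cone'' classes that are invisible both on underlying spectra and on all geometric fixed points, and exactly such classes are the potential values of your $\delta_{n,i}$ once you work in $H\underline{\F}_p$-modules. So even granting the Mitchell--Priddy vanishing nonequivariantly and an inductive handle on proper subgroups, you have not ruled out $\delta_n\neq 0$. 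As you yourself note, the nonequivariant connectivity bound also fails to transport, so strategy (a) as described does not close.

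Your one-sentence alternative is, in substance, the paper's actual proof, and it is the one you should develop. Concretely: the composite $\Sigma^{\infty G}S^0\rightarrow \Sp_{\Z/p}^p(\Sigma^{\infty G}S^0)\rightarrow \Sp_{\Z/p}^{\infty}(\Sigma^{\infty G}S^0)\simeq H\underline{\F}_p$ is the unit, so smashing with $H\underline{\F}_p$ and using the multiplication $H\underline{\F}_p\sm H\underline{\F}_p\rightarrow H\underline{\F}_p$ gives a retraction of $H\underline{\F}_p\sm\Sp^1\rightarrow H\underline{\F}_p\sm\Sp_{\Z/p}^p$, hence a section $t_1$ of the quotient onto the first layer. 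One then defines $t_n$ on $H\underline{\F}_p\sm\Sigma^n\mathbf{e}_nB_G(\Z/p)^n_+$ as the Steinberg inclusion into $(\Sigma\mathbf{e}_1B_G\Z/p_+)^{\sm n}\sm H\underline{\F}_p$, followed by $t_1^{\sm n}$ and the product map into $H\underline{\F}_p\sm\Sp_{\Z/p}^{p^n}$. The point you omit is how one verifies that $t_n$ is actually a section of the quotient map: this is exactly where Theorem \ref{thm:maintheorem} enters, since modulo the lower filtration the composite of Steinberg inclusion and product is the zigzag of Diagram \ref{eqn:maindiagramintro}, which the main theorem identifies as an equivalence. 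With that verification supplied, the splitting of each stage (and hence the wedge decomposition) follows with no obstruction computation at all, which is precisely what your phrase ``bypassing the vanishing computation'' was after.
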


\subsection{Intended application}
Our intended application is towards a computation of the mod $p$ equivariant dual Steenrod algebra $H\underline{\F}_p\sm H\underline{\F}_p$ when the group is $G=C_p$, the cyclic group of order $p$, and $p$ is an odd prime. Theorem \ref{thm:maintheorem} identifies the layers in the symmetric power filtration of the genuine Eilenberg-Maclane $G$-spectrum $H\underline{\F}_p$. Theorem \ref{thm:eqsplitting} tells us that after applying the functor $H\underline{\F}_p\sm (-)$, the filtration splits. One may then construct an equivariant cellular filtration of the $G$-space $B_G\Z/p$, and use this filtration to compute the summands $H\underline{\F}_p\sm \Sigma^n \mathbf{e}_nB_G(\Z/p)^n_+$ in an invariant-theoretic manner inspired by Mitchell--Priddy (\cite{MP}). This computation will appear in a joint paper with Dylan Wilson.

For the prime $p=2$, the $C_2$-equivariant dual Steenrod algebra has already been computed by Greenlees and later Hu--Kriz. Here, $\bigstar$ indicates a bigrading over the real representations of $C_2$, and $\sigma$ denotes the sign representation. Hu--Kriz produced generators in $\pi_{\bigstar}^{C_2}(H\underline{\F}_2\sm H\underline{\F}_2)$
\begin{itemize}
\item $\xi_1, \xi_2, \ldots$ of degree $|\xi_i|=(2^i-1)(1+\sigma)$, where $\xi_i$ comes from the $2^i(1+\sigma)$-cell of the $C_2$-space $\mathbf{CP}^{\infty}\simeq B_{C_2}S^{\sigma}$.
\item $\tau_0, \tau_1, \ldots$ of degree $|\tau_i|=2^i+(2^i-1)\sigma$, where $\tau_i$ comes from the $2^i(1+\sigma)$-cell of the $C_2$-space $B_{C_2}\Z/2$.
\end{itemize}
They then proved that
$$\pi_{\star}^{C_2}(H\underline{\F}_2\sm H\underline{\F}_2)\simeq (H\underline{\F}_2)_{\bigstar}[\xi_i, \tau_i]/\langle\tau_0a_{\sigma}=u_{\sigma}+\eta_R(u_{\sigma}), \tau_i^2=\tau_{i+1}a_{\sigma}+\xi_{i+1}\eta_R(u_{\sigma})\rangle$$
An exposition of this computation is given in \cite{LSWX}. It is curious that this result bears a similarity to the classical odd primary dual Steenrod algebra computed by Milnor. Hu--Kriz (\cite{HK}) used the above computations to analyze the Adams-Novikov spectral sequence for $\BP_{\mathbb{R}}$, which converges to the $2$-local stable homotopy groups of spheres.

\subsection{Historical significance}

The study of the mod $p$ cohomology of the symmetric power filtration traces back to Nakaoka (\cite{Na}), who studied the symmetric power filtration for $H\Z$. The $n$-th layer in this filtration is given a name,
$$\Sp^{p^n}(\Sigma^{\infty}S^0)/\Sp^{p^{n-1}}(\Sigma^{\infty}S^0)=:\Sigma^nL(n).$$
It is classically known that $L(n)\simeq e_n(B(\Z/p)^n)^{\rreg_n}$, i.e. that $L(n)$ is the Steinberg summand of the Thom spectrum on $B(\Z/p)^n$ corresponding to the reduced regular representation $\rreg_n:(\Z/p)^n \rightarrow O(p^n-1)$. See the paper of Arone-Dwyer-Lesh \cite{ADL} for a proof of this fact. The closely related spectra $M(n)\simeq e_nB(\Z/p)^n_+$ which (\cite{MP}) appear as the layers in the mod $p$ symmetric power were used by Mitchell provide a short proof of the Conner-Floyd conjecture (\cite{MitCF}). It is also a result of Welcher (\cite{Wel}) that the spectrum $M(n)$ has chromatic type $n$.

Let $H^*(-)$ denote mod $p$ cohomology. The mod $p$ symmetric power filtration of $H\F_p$ closely reflects the structure of the Steenrod algebra. Mitchell--Priddy showed in \cite{MP} that $H^*(\Sp_{\Z/p}^{p^n}(\Sigma^{\infty}S^0))$ has a basis given by the classes $\theta^I(u_n)$ where $u_n\in H^0(\Sp_{\Z/p}^{p^n}(\Sigma^{\infty}S^0))$ is a generator, and $I$ varies over admissible sequences of length at most $n$. Within this vector space, $H^*(\Sigma^n M(n))$ is the span of the classes $\theta^I(u_n)$ with length $\ell(I)=n$. A paper of Mitchell (\cite{Mit}) demonstrates the algebra and invariant theory involved.

The Whitehead conjecture, proven by Kuhn (\cite{KMP}, \cite{Ku}), states that the attaching maps yield a long exact sequence on homotopy groups
$$\xymatrix{\cdots\ar[r] & L(3)\ar[r] & L(2)\ar[r] & L(1)\ar[r] & S^0\ar[r] & H\Z\ar[r] & 0}$$
This is a resolution of the spectrum $H\Z$ by spacelike spectra, i.e. spectra which are stable summands of spaces (\cite{Ku3}). Kuhn's orignial and also his modern proof (\cite{Ku2}) utilizes the fact that there are almost no $\mathcal{A}$-module maps between the $H^*(L(n))$'s, which is a kind of rigidity result. It is an observation due to Mitchell--Priddy that $M(n) \simeq L(n)\vee L(n-1)$, and there is a mod $p$ version of the Whitehead conjecture. The main theorems of the present paper suggest that the above story may have an equivariant analogue.

\subsection{Outline of the paper}

Our proof of Theorem \ref{thm:maintheorem} is inspired by Mitchell--Priddy's proof of the nonequivariant analogue, which is by induction on $n$. The $n=1$ case, namely that $\Sigma e_1(B\Z/p)_+ \simeq \Sigma (B\Sigma_p)_+\simeq (H\F_p)_p/(H\F_p)_1$, is done by an explicit geometric argument. Now let $n\ge 2$. Both the Steinberg summands and the mod $p$ symmetric powers are equipped with product maps. But the product maps on Steinberg summands have \emph{retractions}. One has an intermediate inclusion of the Steinberg summand $e_nB(\Z/p)^n_+ \subset (e_1\boxtimes \cdots \boxtimes e_1)B(\Z/p)^n_+$. Now construct the commutative diagram
\begin{equation}\label{eqn:maindiagramintro}
\xymatrix{\Sigma^ne_nB(\Z/p)_+^n\ar[r]_{\subset}\ar@/^2ex/@{-->}[rr]^{\mathrm{id}} & \Sigma^n(e_1B\Z/p_+)^{\sm n}\ar[r]_{\mathrm{product}}\ar@{=}[d] & \Sigma^ne_nB(\Z/p)_+^n\\
& (\Sp_{\Z/p}^p(\Sigma^{\infty}S^0)/\Sp_{\Z/p}^1(\Sigma^{\infty}S^0))^{\sm n}\ar[r]_{\mathrm{product}}&\Sp_{\Z/p}^{p^n}(\Sigma^{\infty}S^0)/\Sp_{\Z/p}^{p^{n-1}}(\Sigma^{\infty}S^0)}.
\end{equation}

Mitchell--Priddy prove that the zigzag composition $\xymatrix{\Sigma^ne_nB(\Z/p)^n_+ \ar[r]& (H\F_p)_{p^n}/(H\F_p)_{p^{n-1}}}$ from the top left to the bottom right of this diagram is an isomorphism on mod $p$ cohomology, and therefore is a $p$-local equivalence. The proof is by induction on $n$ and requires a particular lemma about the interaction of the $e_n$ with the action of $\mathcal{A}$ in cohomology (\cite{MP}, Section 5).

Our proof of Theorem \ref{thm:maintheorem} follows a similar strategy. The $n=1$ case is proven in Section \ref{sec:firstcofiber}. For $n\ge 2$, we construct the analogous commutative diagram, and must prove that the zigzag composition $\Sigma^n \mathbf{e}_nB_G(\Z/p)_+^n \rightarrow \Sp_{\Z/p}^{p^n}(\Sigma^{\infty G}S^0)/\Sp_{\Z/p}^{p^{n-1}}(\Sigma^{\infty G}S^0)$ is a $p$-local equivalence of genuine $G$-spectra. By induction on the group $G$, it suffices to prove the zigzag composition is a mod $p$ homology isomorphism on geometric fixed point spectra. Most of the hard supporting work in the proof is in computing the geometric fixed point spectra $\Phi^G(\mathbf{e}_nB_G(\Z/p)_+^n)$ (done in \cite{Sankar}) and $\Phi^G(\Sp_{\Z/p}^{p^n}(\Sigma^{\infty G}S^0)/\Sp_{\Z/p}^{p^{n-1}}(\Sigma^{\infty G}S^0)$, and computing the effect of the maps in the above diagram.

Here is an outline of this paper. In Section \ref{sec:symmetricpowers} we define the notion of the \emph{primitives of a $G$-space $X$}, and prove a wedge sum decomposition of the geometric fixed points of the mod $p$ symmetric powers of a suspension $G$-spectrum $\Sigma^{\infty G}X$ whose summands depend on the fixed point space $X^G$ and the primitives of the $G$-space $S^{\infty\rreg_G}$. In Section \ref{sec:stableprimitives}, we prove that the primitives of $S^{\infty\rreg_G}$ are the homotopy orbit space of a certain subgroup complex. In Section \ref{sec:productsonprimitives}, we compute the effect of the product maps relating the geometric fixed point spectra $\{\Phi^G\Sp_{\Z/p}^n(\Sigma^{\infty G}S^0)\}_{n\ge 0}$ as they pertain to our decomposition of the geometric fixed points.

In Section \ref{sec:firstcofiber}, we prove that there is an equivalence of $G$-spectra
$$\Sp_{\Z/p}^p(\Sigma^{\infty G}S^0)/\Sp_{\Z/p}^1(\Sigma^{\infty G}S^0) \simeq \Sigma^{\infty G}(B_G\Aff_1)_+$$
where $\Aff_1$ is the group of $p(p-1)$ affine permutations of the one-dimensional $\F_p$-line. This is the $n=1$ case of Theorem \ref{thm:maintheorem}. In Section \ref{sec:modpsymmetricpowersandSteinbergsummands}, we combine the cumulative results of \cite{Sankar} and Sections \ref{sec:symmetricpowers}, \ref{sec:stableprimitives}, \ref{sec:productsonprimitives}, to describe the zigzag composition of Diagram \ref{eqn:maindiagramintro} on the homology of the geometric fixed points
$$f:H_*(\Phi^G(\Sigma^n \mathbf{e}_nB_G(\Z/p)_+^n);\F_p) \rightarrow H_*(\Phi^G(\Sp_{\Z/p}^{p^n}(\Sigma^{\infty G}S^0)/\Sp_{\Z/p}^{p^{n-1}}(\Sigma^{\infty G}S^0));\F_p).$$
We then use matrix algebra to prove that this map is an isomorphism, completing the proof of Theorem \ref{thm:maintheorem}. Finally in Section \ref{sec:splittingofthefiltration} we prove Theorem \ref{thm:eqsplitting} using a short and straightforward argument.

\section{Fixed Points in Symmetric Powers}
\label{sec:symmetricpowers}
Let $G$ be a finite group and let $X$ be a pointed $G$-space. We decompose the $G$-fixed points of the infinite symmetric power $\Sp^{\infty}(X)$ by using a tower of fibrations. Specifically, we produce in Proposition \ref{prop:symmetricpowersfixedpoints} a finite sequence of topological abelian monoids
$$\xymatrix{\Sp^{\infty}(X^G)=F_0\ar[r] & F_1\ar[r] & F_2\ar[r] & \cdots\ar[r] & F_m=\Sp^{\infty}(X)^G}$$
and identify $F_{i-1} \rightarrow F_i$ as the homotopy fiber of a map from $F_i$ to the infinite symmetric power of what we call a \emph{primitive} of the $G$ action on $X$, or $\Pri^{G/H}(X)$. There is a single primitive functor, and hence a single layer, for each conjugacy class of subgroups of $G$.

In Proposition \ref{prop:modpsymmetricpowersfixedpoints}, we generalize this decomposition to the free $\Z/p$--module generated by $X$, i.e. the mod--$p$ symmetric powers, the case of Proposition \ref{prop:symmetricpowersfixedpoints} being $p=0$. The main content of Proposition \ref{prop:modpsymmetricpowersfixedpoints} is that this decomposition splits if $G$ is a $p$-group.

In Proposition \ref{prop:stablefinitesymmetricpowers}, we extend this decomposition to the case of the geometric fixed points $\Phi^G$ of the infinite symmetric power of a suspension $G$-spectrum $\Sigma^{\infty G}X$. In the context of spectra, we may exploit two further phenomena: fiber and cofiber sequences coincide, and the natural (nonequivariant) map
$$X\sm \Sp^n(\Sigma^{\infty}S^0) \rightarrow \Sp^n(\Sigma^{\infty}X)$$
is an equivalence. This allows us to decompose the geometric fixed points of the finite symmetric power
$$\Phi^G\Sp^n(\Sigma^{\infty G}X)$$
in a similar manner to our decomposition for $\Sp^{\infty}$. This is Proposition \ref{prop:stablefinitesymmetricpowers}. Combining Proposition \ref{prop:stablefinitesymmetricpowers} with Proposition \ref{prop:modpsymmetricpowersfixedpoints}, we obtain a decomposition of
$$\Phi^G\Sp_{\Z/p}^n(\Sigma^{\infty G}X)$$
when $G$ is a $p$-group. This is Proposition \ref{prop:modpstablefinitesymmetricpowers}. The infinite symmetric power of the genuine $G$-spectrum $\Sigma^{\infty G}S^0$ is a model for the equivariant Eilenberg-Maclane spectrum $H\underline{\Z}$, and therefore the finite symmetric powers are a filtration for this genuine $G$ spectrum (Corollary \ref{cor:equivariantEilenbergMacLane}). We thus deduce decompositions of the geometric fixed point spectra $\Phi^G(H\underline{\Z})$ and $\Phi^G(H\underline{\F}_p)$ in Propositions \ref{prop:geometricfixedpointsofHZ} and \ref{prop:geometricfixedpointsofHF_p}.

\subsection{Symmetric Powers with Coefficients}
\label{subsec:symmetricpowerswithcoefficients}
\begin{definition}\label{definition:symmetricpowers}
Let $X$ be a pointed $G$-space, and let $n\ge 1$ be a positive integer. The \emph{$n$-th symmetric power} of $X$ is the pointed $G$-space
$$\Sp^n(X) = X^{\times n}/\Sigma_n.$$
There are inclusions $\Sp^{n-1}(X) \hookrightarrow \Sp^n(X)$ given by adding a copy of the basepoint of $X$. The quotient of this inclusion is given by
$$\Sp^{n-1}(X) \rightarrow \Sp^n(X) \rightarrow X^{\sm n}/\Sigma_n.$$
The \emph{infinite symmetric power} of $X$ is the colimit
$$\Sp^{\infty}X:=\colim_{n\to\infty}\Sp^n(X)=\{(x_1+x_2+\ldots+x_n): x_1, \ldots, x_n\in X\}.$$
The space $\Sp^{\infty}(X)$ is the free topological abelian monoid on $X$, and we write its elements as formal sums as above.
\end{definition}
It is easily seen that $\Sp^n(-)$ is a functor from pointed $G$-spaces to pointed $G$-spaces enjoying the following properties.
\begin{enumerate}
\item There is an \emph{addition} map given by formally summing,
$$\Sp^m(X) \times \Sp^n(X) \rightarrow \Sp^{m+n}(X).$$
\item Because the addition map is commutative, there is an \emph{amalgamation} map,
$$\Sp^m(\Sp^n(X)) \rightarrow \Sp^{mn}(X).$$
\item There is a \emph{multiplication} map given by formally multiplying,
$$\Sp^m(X) \sm \Sp^n(Y) \rightarrow \Sp^{mn}(X\sm Y).$$
\end{enumerate}

More generally if $M$ is any $G$-module, one may define the free topological $M$-module over $X$, denoted by $M\otimes X$ (see definition 2.1 in \cite{DS} for the precise definition).
$$M\otimes X=\{m_1x_1+\ldots+m_nx_n:m_i\in M, x_i\in X\}$$

The group $G$ acts on the space $M\otimes X$ by acting on both coordinates simultaneously. When $M$ is the group $\Z$ with trivial $G$-action, then the inclusion $\Sp^{\infty}(X) \hookrightarrow \Z\otimes X$ is a weak equivalence as long as $X^H$ is connected for every subgroup $H$.

When $M=\Z/p$ with trivial action, we given a special name to this functor.
\begin{definition}\label{definition:modpsymmetricpowers}
For any pointed $G$-space $X$, define the \emph{infinite mod $p$ symmetric power} of $X$ as the topological $\Z/p$-module,
$$\Sp^{\infty}_{\Z/p}:=\Z/p\otimes X=\{(a_1x_1+\ldots+a_nx_n:a_i\in \Z/p, x_i\in X\}.$$
There is an obvious surjection $\Sp^{\infty}(X) \rightarrow \Sp_{\Z/p}^{\infty}(X)$. We define the \emph{$n$-th mod $p$ symmetric power}, denoted $\Sp_{\Z/p}^n(X)$, as the image of $\Sp^n(X)$ under this surjection. Note that $\Sp^n(X)=\Sp_{\Z/p}^n(X)$ for $1\le n\le p-1$.
\end{definition}

It is easily seen that for any $G$-module $A$, the functor $A\otimes (-)$ takes cofiber sequences of pointed $G$-spaces to fiber sequences of unpointed $G$-spaces.

\subsection{The Primitives of a $G$-space}
\label{subsec:theprimitivesofaGspace}
Let $G$ be a finite group, and let $X$ be a $G$-space. For any two subgroups $H\subseteq K \subseteq G$, there is an inclusion of fixed point spaces $X^K \subseteq X^H$. Thus,
\begin{definition}
The fixed point spaces $\{X^H\}_{H\subseteq G}$ define a filtration of $X$ indexed over the (opposite) poset of subgroups of $G$. We call this filtration the \emph{fixed point filtration} of $X$.
\end{definition}

The space $X^H$ carries an action of the normalizer subgroup $N_G(H)=\{g\in G: gHg^{-1}=H\}$. Since $H$ acts trivially on $X^H$, we obtain a residual action of the \emph{Weyl group}, $W_G(H)=N_G(H)/H$ upon $X^H$.
\begin{definition}\label{definition:primitives}
We let $X(H)$ denote the $H$-th layer in the fixed point filtration, i.e.
$$X(H):=X^H/(\bigcup\limits_{K\supsetneq H}X^K).$$
By definition, each point $x\in X(H)$ has isotropy group $\{g\in G:gx=x\}=H$. Therefore, the Weyl group $W_G(H)$ acts freely on the pointed space $X(H)$, and we define the quotient of this action as the \emph{$G/H$-primitives of $X$}
$$\Pri^{G/H}(X):= X(H)/W_G(H).$$
\end{definition}
\begin{example}
The first stage of the fixed point filtration is $\Pri^{G/G}(X)=X^G$.
\end{example}
For each subgroup $H$, $\Pri^{G/H}(-)$ is a functor from pointed $G$-spaces to pointed spaces enjoying the following properties.
\begin{enumerate}
\item Suppose that $H, H'\subseteq G$ are conjugate, that is there is some $g\in G$ such that $H'=gHg^{-1}$. Left action by $g$ induces a homeomorphism
$$g:X(H) \rightarrow X(H').$$
and therefore, a homeomorphism $\xymatrix{\Pri^{G/H}(X) \ar[r]^{\cong}& \Pri^{G/H'}(X)}$.
\item Let $[H]$ denote the set of subgroups of $G$ which are conjugate to $H$. Then $G$ acts on the wedge sum $\bigvee\limits_{H'\in [H]}X(H')$, and the isotropy group of $x\in X(H')$ is $H'$. One may just as well define $\Pri^{G/H}(X)$ as the strict quotient $\Pri^{G/H}(X)= (\bigvee\limits_{H'\in [H]}X(H'))/G$.

\item (Product) For $i=1, 2$, let $G_i$ be a finite group, $H_i\subseteq G_i$ a subgroup, and $X_i$ a pointed $G_i$-space. Then there is a \emph{product} map, which is a homeomorphism
$$\Pri^{G_1/H_1}(X_1)\sm \Pri^{G_2/H_2}(X) \cong \Pri^{\frac{G_1\times G_2}{H_1\times H_2}}(X_1\sm X_2).$$
\end{enumerate}

\subsection{Subgroup Orderings}
\label{subsec:subgrouporderings}
We would like to convert the fixed point filtration of $X$ into a true filtration, and this motivates the following definition.

\begin{definition}\label{definition:subgroupordering}
A \emph{subgroup ordering for $G$} is a sequence of subgroups $H_0, H_1, \ldots, H_m\subseteq G$ satisfying two properties.
\begin{enumerate}
\item Every subgroup of $G$ is conjugate to exactly one of the $H_i$.
\item If $i<j$, then no conjugate of $H_i$ is a subgroup of $H_j$.
\end{enumerate}
These two properties imply that $H_0=G$ and $H_m=\{1\}$.
\end{definition}
Given a subgroup ordering $H_0, \ldots, H_m$, one may construct an honest filtration of $X$,
\begin{equation}\label{equation:isotropyfiltration}
X^G=X^{H_0}\subseteq \bigcup\limits_{H'\in [H_1]}X^{H'}\subseteq \bigcup\limits_{H'\in [H_2]}X^{H'}\subseteq \cdots \subseteq \bigcup\limits_{H'\in [H_{m-1}]}X^{H'} \subseteq X^{H_m}=X.
\end{equation}
It is clear that when $G$ is a finite group, a subgroup ordering exists, and henceforth we will simply pick one. For our purposes, it does not matter which one.

\subsection{Fixed Points and Primitives}
\label{subsec:fixedpointsandprimitives}
Recall that $\Sp^{\infty}(X)$ denotes the free abelian monoid on the pointed $G$-space $X$. The $G$-fixed point space $\Sp^{\infty}(X)^G \subseteq \Sp^{\infty}(X)$ is the space of sums $(x_1+\ldots+x_n)$ such that $G$ permutes the points $x_1, \ldots, x_n$ in some fashion. In general, the topological abelian monoid $\Sp^{\infty}(X)^G$ is not a free one, i.e. there is no pointed space $Y$ such that $\Sp^{\infty}(X)^G\cong \Sp^{\infty}(Y)$.

We will use the fixed point filtration of $X$ to product a resolution of $\Sp^{\infty}(X)^G$ by the free abelian monoids $\Sp^{\infty}(\Pri^{G/H_i}(X))$ (Proposition \ref{prop:symmetricpowersfixedpoints}). We will also show that the analogous resolution of $\Sp_{\Z/p}^{\infty}(X)^G$ splits as a Cartesian product when $G$ is a $p$-group (Proposition \ref{prop:modpsymmetricpowersfixedpoints}). Our proofs will rely on the following lemma.

\begin{lemma}\label{lemma:oneisotropygroup}
Let $H\subseteq G$ be a subgroup, and suppose that every point $x\in X$ has isotropy group conjugate to $H$. Let $[G:H]=|G|/|H|$ denote the index of the subgroup $H$. Then
\begin{enumerate}
\item The fixed point space $\Sp^{\infty}(X)^G$ is the free abelian monoid $\Sp^{\infty}(X)^G \cong \Sp^{\infty}(\Sp^{[G:H]}(X)^G)$, with the homeomorphism given by amalgamation.
\item There is a homeomorphism $\Sp^{[G:H]}(X)^G \cong \Pri^{G/H}(X)$.
\end{enumerate}

\end{lemma}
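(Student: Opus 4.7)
The plan is to exploit the strong hypothesis that every non-basepoint of $X$ has isotropy conjugate to $H$. This forces the orbit structure to be uniform: every non-basepoint $G$-orbit in $X$ has size exactly $[G:H]$. Both parts of the lemma then reduce to a direct orbit-decomposition of $G$-fixed multisets in $X$.

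I would begin with part (2). First, observe that the hypothesis implies $X(H) = X^H$: if $x \in X^H$, then the isotropy $G_x$ contains $H$ and is conjugate to $H$, so by cardinality $G_x = H$; in particular $X^K = \{\ast\}$ for every $K \supsetneq H$, and therefore $\Pri^{G/H}(X) = X^H/W_G(H)$. Next, construct the orbit-sum map
\begin{equation*}
\s : X^H \to \Sp^{[G:H]}(X)^G, \qquad x \mapsto \sum_{gH \in G/H} gx,
\end{equation*}
which is well-defined since $x \in X^H$, and invariant under the $W_G(H)$-action because right multiplication by $n \in N_G(H)$ permutes the cosets of $H$ in $G$ (using $nH = Hn$). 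So $\s$ descends to a continuous map $\bar\s : \Pri^{G/H}(X) \to \Sp^{[G:H]}(X)^G$.

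The heart of (2) is now a short combinatorial computation: any $G$-fixed element of $\Sp^{[G:H]}(X)$ is a $G$-invariant multiset of non-basepoint elements of total size at most $[G:H]$, and since each non-basepoint $G$-orbit has size exactly $[G:H]$, this multiset is either empty or a single orbit with multiplicity one. This yields a set-theoretic inverse to $\bar\s$; two elements of $X^H$ yield the same orbit if and only if they lie in the same $N_G(H)$-orbit, completing the bijection. For part (1), the same orbit decomposition applies to any $G$-invariant \emph{finite} multiset in $X$: it is uniquely a formal sum with multiplicities of $G$-orbits, each of which by (2) is an element of $\Sp^{[G:H]}(X)^G$. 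The amalgamation map restricted to $\Sp^\infty(\Sp^{[G:H]}(X)^G) \subseteq \Sp^\infty(\Sp^{[G:H]}(X))$ sends a formal sum of orbits to the concatenated multiset, and the orbit decomposition produces its inverse.

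The main obstacle is not conceptual but point-set: promoting the evident set-theoretic bijections to homeomorphisms. Working in compactly generated weakly Hausdorff spaces with $W_G(H)$ finite, this reduces to verifying that the orbit-sum maps are closed surjections on each bounded-degree stratum $\Sp^n(X)^G$, after which the statement for $\Sp^\infty$ follows by passing to the filtered colimit along closed cofibrations. This is routine, but it is the only place where real care is needed.
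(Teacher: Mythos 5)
Your proposal is correct and follows essentially the same route as the paper: the orbit-sum (transfer) map $[x]\mapsto \sum_{gH\in G/H} gx$ from $X^H/W_G(H)$ with inverse given by selecting a point of isotropy exactly $H$ in the unique orbit, together with the decomposition of any $G$-fixed finite sum into orbits of type $G/H$ for part (1). Your added remarks (that the hypothesis forces $X(H)=X^H$, and the point-set care in upgrading the bijections to homeomorphisms) are consistent refinements of the same argument rather than a different approach.
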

\begin{proof}
(1) is immediate from the observation that any sum $(x_1+x_2+\ldots+x_n)\in \Sp^{\infty}(X)^G$ decomposes as a sum of $G$-orbits isomorphic to $G/H$.

Now we prove (2). Let $a=[N_G(H):H]$, and let $b=[G:N_G(H)]$, so that $[G:H]=ab$. By definition, $\Pri^{G/H}(X)= X^H/W_G(H)$. The transfer map
$$X^H/W_G(H) \rightarrow \Sp^{[G:H]}(X)^G$$
$$[x] \mapsto \sum\limits_{g \in G/H}gx$$
has an inverse given as follows. If $(x_1+\ldots+x_{[G:H]})$ is a $G$-fixed point of $\Sp^{[G:H]}(X)$, then there are $a$ points $x_j$ with isotropy group $H$ --- pick one. The image of $x_j$ in the quotient space $X^H/W_G(H)$ does not depend on which point we pick, and
$$\sum\limits_{g \in G/H}gx_j=(x_1+\ldots+x_{[G:H]}).$$
\end{proof}

\begin{proposition}\label{prop:symmetricpowersfixedpoints}
Let $H_0, H_1, \ldots, H_m$ be a subgroup ordering (Definition \ref{definition:subgroupordering}) for $G$.  Then the functor $(\Sp^{\infty}(-))^G$ has a filtration by functors valued in topological abelian monoids
$$\xymatrix{\Sp^{\infty}((-)^G)=F_0\ar[r] & F_1\ar[r] & F_2\ar[r] & \cdots\ar[r] & F_{m-1}\ar[r] & F_m= (\Sp^{\infty}(-))^G}$$
where for each $1\le i\le m$, the map $F_{i-1} \rightarrow F_i$ is the homotopy fiber of a fibration $F_i \rightarrow \Sp^{\infty}(\Pri^{G/H_i}(-))$.

\end{proposition}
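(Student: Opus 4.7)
The plan is to turn the isotropy filtration determined by the subgroup ordering into a tower of functors by applying $\Sp^{\infty}(-)^G$. Set $Y_i(X) := \bigcup_{j \le i} \bigcup_{H' \in [H_j]} X^{H'}$, so that $Y_0(X), \ldots, Y_m(X)$ is an ascending chain of $G$-invariant closed subspaces of $X$ with $Y_0(X) = X^G$ and $Y_m(X) = X$ --- this is the cumulative form of the isotropy filtration \ref{equation:isotropyfiltration}. Define $F_i(X) := \Sp^{\infty}(Y_i(X))^G$. This is functorial in $X$, lands in topological abelian monoids, and realizes the prescribed endpoints $F_0 = \Sp^{\infty}((-)^G)$ and $F_m = \Sp^{\infty}(-)^G$.

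For each $i$, the cofiber sequence of pointed $G$-spaces $Y_{i-1} \hookrightarrow Y_i \twoheadrightarrow Y_i/Y_{i-1}$ is converted by $\Sp^{\infty}(-)$ into a fiber sequence of topological abelian monoids, using the observation recorded in Section \ref{subsec:symmetricpowerswithcoefficients} that $A \otimes (-)$ sends cofiber sequences of pointed $G$-spaces to fiber sequences of $G$-spaces. Taking $G$-fixed points preserves fiber sequences, so this yields
$$F_{i-1} \longrightarrow F_i \longrightarrow \Sp^{\infty}(Y_i/Y_{i-1})^G.$$

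The substantive step is to identify the base with $\Sp^{\infty}(\Pri^{G/H_i}(X))$, and the main obstacle lies entirely in this isotropy bookkeeping. The key observation --- which uses property (2) of the subgroup ordering in an essential way --- is that every non-basepoint of $Y_i/Y_{i-1}$ has isotropy group in the single conjugacy class $[H_i]$: if $x$ is such a point with $G_x \in [H_j]$, then $x \in Y_i \setminus Y_{i-1}$ forces $G_x$ to contain a conjugate of $H_i$, whence a conjugate of $H_i$ is contained in $H_j$ and $i \ge j$ by property (2); on the other hand $x \notin Y_{i-1}$ forces $j \ge i$. Consequently
$$Y_i/Y_{i-1} \;\cong\; \bigvee_{H' \in [H_i]} X(H')$$
as pointed $G$-spaces, and Lemma \ref{lemma:oneisotropygroup} applies to give homeomorphisms
$$\Sp^{\infty}(Y_i/Y_{i-1})^G \cong \Sp^{\infty}\bigl(\Sp^{[G:H_i]}(Y_i/Y_{i-1})^G\bigr) \cong \Sp^{\infty}(\Pri^{G/H_i}(X)),$$
where the final step uses the equivalent description $\Pri^{G/H_i}(X) = (\bigvee_{H' \in [H_i]} X(H'))/G$ recalled in Section \ref{subsec:theprimitivesofaGspace}. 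With this identification the fiber sequence above becomes the desired fibration $F_i \to \Sp^{\infty}(\Pri^{G/H_i}(X))$ with fiber $F_{i-1}$, completing the construction.
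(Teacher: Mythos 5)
Your argument is correct and follows essentially the same route as the paper: apply $\Sp^{\infty}(-)^G$ to the isotropy filtration, use that $A\otimes(-)$ turns cofiber sequences into fiber sequences and that $(-)^G$ preserves them, and identify the base via Lemma \ref{lemma:oneisotropygroup} and the wedge description of $\Pri^{G/H_i}$. Your only deviation is cosmetic but welcome: you use the cumulative unions $Y_i=\bigcup_{j\le i}\bigcup_{H'\in[H_j]}X^{H'}$ (and spell out the isotropy bookkeeping that the paper leaves as ``easily seen''), which is the precise form of the filtration in Equation \ref{equation:isotropyfiltration} that the paper's $C_i$ notation implicitly intends.
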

\begin{proof}
Fix a pointed $G$-space $X$. For $0\le i\le n$, let $C_i$ denote the subspace $C_i=\bigcup\limits_{H'\in [H_i]}X^{H'}$. Then as in Equation \ref{equation:isotropyfiltration}, one has the following filtration of $X$:
$$X^G=C_0 \subseteq C_1 \subseteq \cdots\subseteq C_{m-1}\subseteq C_m=X.$$
For each $0\le i\le n$, let $F_i:=\Sp^{\infty}(C_i)^G$. The functor $\Sp^{\infty}(-)$ (Definition \ref{definition:symmetricpowers}) takes cofiber sequences to fiber sequences, and the functor $(-)^G$ preserves fiber sequences. Thus, for each $1\le i \le n$, one has a fiber sequence
$$F_{i-1} \rightarrow F_i \rightarrow \Sp^{\infty}(C_i/C_{i-1})^G.$$
Every isotropy group of the cofiber $C_i/C_{i-1}$ is conjugate to $H_i$. Thus, by Lemma \ref{lemma:oneisotropygroup}, there is a homeomorphism
$$\Sp^{\infty}(C_i/C_{i-1})^G \cong \Sp^{\infty}(\Pri^{G/H_i}(C_i/C_{i-1})).$$
It is easily seen that $\Pri^{G/H_i}(C_i/C_{i-1})=\Pri^{G/H_i}(X)$, by the definition of the primitive functor (\ref{definition:primitives}). Thus, the proof is complete.
\end{proof}

\begin{center}
\begin{tikzpicture}

\draw (0,0) ellipse (6cm and 3cm);
\node [above] at (0,3) {\Large $X$};

\node[draw,circle] (1) at (-4,1.5) {$x_1$};
\node[draw,circle] (2) at (-5,0.5) {$x_2$};
\node[draw,circle] (3) at (-3.75,-1) {$x_3$};
\node[draw,circle] (4) at (-2.75,0) {$x_4$};

\node[draw,circle] (5) at (-1,2) {$x_5$};
\node[draw,circle] (6) at (1,2) {$x_6$};

\node[draw,circle] (7) at (-0,-1.5) {$x_7$};
\node[draw,circle] (8) at (1.5,-2) {$x_8$};
\node[draw,circle] (9) at (3,-1) {$x_9$};

\draw [->] (1) to [out=180,in=90] (2);
\draw [->] (2) to [out=270,in=180] (3);
\draw [->] (3) to [out=0,in=270] (4);
\draw [->] (4) to [out=90,in=0] (1);

\draw [->] (5) to [out=45,in=135] (6);
\draw [->] (6) to [out=225,in=315] (5);

\draw [->] (7) to [out=315,in=180] (8);
\draw [->] (8) to [out=0,in=240] (9);
\draw [->] (9) to [out=150,in=60] (7);

\end{tikzpicture}
\vskip 0.1in
Pictured: An element of $(\Sp^9(X))^G$ with three orbits.
\end{center}

\begin{proposition}\label{prop:modpsymmetricpowersfixedpoints}
Let $G$ be a $p$-group. Let $H_0, H_1, \ldots, H_m$ be a subgroup ordering (\ref{definition:subgroupordering}) for $G$.  Then the functor $(\Sp_{\Z/p}^{\infty}(-))^G$ has a filtration by functors valued in topological abelian monoids
$$\xymatrix{\Sp_{\Z/p}^{\infty}((-)^G)=F_0\ar[r] & F_1\ar[r] & F_2\ar[r] & \cdots\ar[r] & F_{m-1}\ar[r] & F_m= (\Sp_{\Z/p}^{\infty}(-))^G}$$
where for each $1\le i\le m$, the map $F_{i-1} \rightarrow F_i$ is the homotopy fiber of a split fibration
$$\xymatrix{F_{i-1}\ar[r] & F_i\ar[r]& \Sp_{\Z/p}^{\infty}(\Pri^{G/H_i}(-))\ar@/_2ex/@{-->}[l]}.$$
Thus, the functor $(\Sp_{\Z/p}^{\infty}(-))^G$ splits as a Cartesian product
$$(\Sp_{\Z/p}^{\infty}(-))^G\cong \prod\limits_{i=0}^{m}\Sp_{\Z/p}^{\infty}(\Pri^{G/H_i}(-)).$$
\end{proposition}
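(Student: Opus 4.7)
The plan is to mirror the proof of Proposition~\ref{prop:symmetricpowersfixedpoints}, adapting each step to the mod-$p$ setting and then constructing explicit splittings. Set $F_i := \Sp_{\Z/p}^{\infty}(C_i)^G$ with $C_i = \bigcup_{H' \in [H_i]} X^{H'}$ as before. Since $\Sp_{\Z/p}^{\infty}(-) = \Z/p \otimes (-)$ sends cofiber sequences of pointed $G$-spaces to fiber sequences (Subsection~\ref{subsec:symmetricpowerswithcoefficients}), and fixed points preserve fiber sequences, applying this to $C_{i-1} \hookrightarrow C_i \to C_i/C_{i-1}$ produces the fiber sequence $F_{i-1} \to F_i \to \Sp_{\Z/p}^{\infty}(C_i/C_{i-1})^G$.

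Next I would establish the mod-$p$ analogue of Lemma~\ref{lemma:oneisotropygroup}: if $Y$ is a pointed $G$-space in which every non-basepoint has isotropy conjugate to $H$, then the transfer $[y] \mapsto \sum_{g \in G/H} gy$ defines a homeomorphism $\Sp_{\Z/p}^{\infty}(\Pri^{G/H}(Y)) \xrightarrow{\cong} \Sp_{\Z/p}^{\infty}(Y)^G$. The inverse is direct: a $G$-fixed element of $\Sp_{\Z/p}^{\infty}(Y)$ is uniquely a $\Z/p$-linear combination of $G$-orbit sums, and each orbit corresponds canonically to a point of $\Pri^{G/H}(Y)$. Applied to $Y = C_i/C_{i-1}$, using $\Pri^{G/H_i}(C_i/C_{i-1}) = \Pri^{G/H_i}(X)$ as in the proof of Proposition~\ref{prop:symmetricpowersfixedpoints}, this identifies the third term of the fiber sequence with $\Sp_{\Z/p}^{\infty}(\Pri^{G/H_i}(X))$.

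For the splitting, I would define $s_i : \Sp_{\Z/p}^{\infty}(\Pri^{G/H_i}(X)) \to F_i$ by extending the same transfer formula $[x] \mapsto \sum_{g \in G/H_i} gx$ $\Z/p$-linearly. Well-definedness follows because the sum depends only on the $G$-orbit of $x$ (hence only on $[x]$); the image lies in $F_i$ because each $gx$ has isotropy conjugate to $H_i$ and therefore lies in $C_i$; and the composition with the projection $F_i \to \Sp_{\Z/p}^{\infty}(\Pri^{G/H_i}(X))$ is the identity by construction. Splitting each fiber sequence and iterating on $i$ then yields $F_m \cong \prod_{i=0}^{m} \Sp_{\Z/p}^{\infty}(\Pri^{G/H_i}(X))$.

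The main obstacle I anticipate is verifying that the mod-$p$ analogue of Lemma~\ref{lemma:oneisotropygroup} really gives a homeomorphism of topological $\Z/p$-modules (not merely a set-theoretic bijection), and that the transfer section is continuous and natural in $X$. The $p$-group hypothesis enters here by ensuring that all indices $[G:H_i]$ for proper $H_i \subsetneq G$ are positive powers of $p$, which meshes cleanly with the $\Z/p$-coefficient structure and makes the split fiber sequences at each stage of the filtration assemble consistently into the global product decomposition.
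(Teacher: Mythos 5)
Your outline follows the same route as the paper: filter by $F_i=\Sp_{\Z/p}^{\infty}(C_i)^G$, identify the base of each fibration with $\Sp_{\Z/p}^{\infty}(\Pri^{G/H_i}(-))$ via a mod-$p$ version of Lemma \ref{lemma:oneisotropygroup}, and split by the transfer $[x]\mapsto\sum_{g\in G/H_i}gx$. However, there is a genuine gap at the decisive step, the well-definedness of the section $s_i$. The space $\Pri^{G/H_i}(X)$ is the quotient of $X^{H_i}$ in which the entire subspace $\bigcup_{K\supsetneq H_i}X^K$ is collapsed to the basepoint, and then the Weyl group is divided out. Your justification (``the sum depends only on the $G$-orbit of $x$, hence only on $[x]$'') only handles the Weyl-group identification: two representatives of the same non-basepoint class do lie in one $N_G(H_i)$-orbit, but the many points sent to the basepoint class --- all $x\in X^{H_i}$ whose isotropy strictly contains $H_i$ --- lie in different $G$-orbits, and for the transfer formula to descend to a continuous pointed map on $\Pri^{G/H_i}(X)$ (and hence extend $\Z/p$-linearly to a section) you must check that it sends every such point to $0$. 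This is exactly where the paper uses the hypotheses: if $x$ has isotropy $K\supsetneq H_i$, then $\sum_{g\in G/H_i}gx=[K:H_i]\cdot\sum_{g\in G/K}gx$, and $[K:H_i]$ is a positive power of $p$ because $G$ is a $p$-group, so the sum vanishes with $\Z/p$-coefficients. Without this verification the map simply does not factor through $\Pri^{G/H_i}(X)$; with $\Z$-coefficients, or for $G$ not a $p$-group, it genuinely fails, which is why Proposition \ref{prop:symmetricpowersfixedpoints} does not split while this one does.

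Relatedly, your closing remark locates the role of the $p$-group hypothesis in the indices $[G:H_i]$ being powers of $p$; the relevant indices are the $[K:H_i]$ for isotropy groups $K\supsetneq H_i$, as above, and ``meshes cleanly with the $\Z/p$-coefficient structure'' is not an argument. The remaining ingredients of your outline --- the fiber sequences obtained from $C_{i-1}\to C_i\to C_i/C_{i-1}$, the mod-$p$ analogue of Lemma \ref{lemma:oneisotropygroup} for spaces with a single conjugacy class of isotropy (where no higher-isotropy points occur, so the transfer is unproblematic there), and the observation that the composite of the section with the projection is the identity --- agree with the paper and are fine at the level of detail given.
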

\begin{proof}
Fix a pointed $G$-space $X$. For $0\le i\le n$, let $C_i$ denote the subspace $C_i=\bigcup\limits_{H'\in [H_i]}X^{H'}$ as in Proposition \ref{prop:symmetricpowersfixedpoints}. Let $F_i=(\Sp_{\Z/p}^{\infty}(C_i))^G$. Consider the continuous map of pointed spaces
$$f:X^{H_i} \rightarrow F_i$$
defined by $f(x)=\sum\limits_{g\in G/H_i}gx$. Observe the following two properties of $f$.
\begin{itemize}
\item If the isotropy group of $x$ is some $K\supsetneq H_i$, then the index $[K:H_i]$ is a power of $p$ (because $G$ is a $p$-group), and it follows that $f(x)=0$. Therefore, the map $f$ factors through the quotient $X^{H_i}/\bigcup\limits_{K\supsetneq H_i}X^K$.
\item It is easily checked that for any $w$ in the Weyl group $W_G(H_i)$, we have $f(wx)=f(x)$.
\end{itemize}
So $f$ descends to a map
$$\tilde{f}:\Pri^{G/H_i}(X)=(X^{H_i}/\bigcup\limits_{K\supsetneq H_i}X^K)/W \rightarrow (\Z/p\otimes \bigcup\limits_{H\sim H_i}X^H)^G$$
Extend this map $\Z/p$-linearly to obtain the desired section map $(\Sp_{\Z/p}^{\infty}(\Pri^{G/H_i}(X))) \rightarrow F_i$. It is easily seen that this map is the inverse to the fibration $F_i \rightarrow (\Sp_{\Z/p}^{\infty}( \Pri^{G/H_i}(X)))$.
\end{proof}

As a corollary, we deduce the existence of another structure map for the primitives functor when $G$ is a finite $p$-group. Let $G'\subseteq G$ be any subgroup, and let $H\unlhd G$ be a normal subgroup. Let $H'=G'\cap H$ denote the intersection. Then the $G$-set $G/H$ decomposes as a $G'$-set into the disjoint union of $p^m=\frac{|G||H'|}{|G'||H|}$ copies of $G'/H'$, i.e.
$$\mathrm{res}_{G'}^G(G/H) = (G'/H')\sqcup (G'/H')\sqcup\cdots \sqcup (G'/H').$$
The reason for assuming that $H\unlhd G$ is to avoid subtlety about how $gHg^{-1}\cap G'$ may vary in size for different conjugate subgroups $gHg^{-1}$.

For any $G$-space $X$, consider the inclusion of fixed points
$$\iota: \Sp_{\Z/p}^{\infty}(X)^G \hookrightarrow \Sp_{\Z/p}^{\infty}(X)^{G'}.$$
Using the Cartesian product decomposition of Proposition \ref{prop:modpsymmetricpowersfixedpoints}, we obtain a $\Z/p$-module map $\Sp_{\Z/p}^{\infty}(\Pri^{G/H}(X)) \rightarrow \Sp_{\Z/p}^{\infty}(\Pri^{G'/H'}(X))$ which is the free $\Z/p$-module on a map
$$\Pri^{G/H}(X) \rightarrow \Sp_{\Z/p}^{p^m}(\Pri^{G'/H'}(X)).$$
\begin{definition}\label{definition:primitivesrestriction}
The map $\Pri^{G/H}(X) \rightarrow \Sp_{\Z/p}^{p^m}(\Pri^{G'/H'}(X))$ described above is called the \emph{restriction map} for primitives.
\end{definition}

\subsection{Passing to Spectra}
\label{subsec:passingtospectra}
For our purposes, a \emph{spectrum} $\mathbf{X}$ is a sequence of pointed spaces $X_0, X_1, X_2, \ldots$ with specified structure maps $\Sigma X_n \rightarrow X_{n+1}$. A map between spectra $f:\mathbf{X} \rightarrow \mathbf{Y}$ is a collection of maps $f_n:X_n \rightarrow Y_n$ compatible with the structure maps. Such a map $f$ is an \emph{equivalence} if $f_n$ is an equivalence on homotopy groups up to dimension $n+\ell(n)$ for every $n$, where $\ell:\Z_{\ge 0} \rightarrow \Z_{\ge 0}$ is some function such that $\lim\limits_{n\to \infty}\ell(n)=\infty$. Intuitively, this means that as $n$ grows, the connectivity of $f_n$ grows at a rate faster than $n$.

When $G$ is a finite or compact Lie group, there are two types of $G$-spectra to consider. See \cite{HHR} for a detailed working guide to equivariant spectra.
\begin{definition}
A \emph{na{\"i}ve $G$-spectrum} $\mathbf{X}$ is a sequence of pointed $G$-spaces $X_0, X_1, X_2, \ldots$ with specified $G$-equivariant maps $\Sigma X_n \rightarrow X_{n+1}$. For example, if $X$ is a pointed $G$-space, then its na{\"i}ve suspension spectrum is the na{\"i}ve $G$-spectrum
$$\Sigma^{\infty}X=\{S^n\sm X\}_{n\ge 0}.$$
For every subgroup $H\subseteq G$, one has the \emph{fixed point spectrum} $\mathbf{X}^H=\{X_n^H\}_{n\ge 0}$. A map $f:\mathbf{X} \rightarrow \mathbf{Y}$ of na{\"i}ve $G$-spectra is said to be an \emph{equivalence} if $f^H:\mathbf{X}^H \rightarrow \mathbf{Y}^H$ is an equivalence of spectra for every subgroup $H\subseteq G$.
\end{definition}

\begin{definition}
Let $RO(G)$ denote the orthogonal representation ring of $G$, and let $\rho_G$ (resp. $\rreg_G$) denote the regular (resp. reduced regular) real representation of $G$. A \emph{genuine $G$-spectrum} $\mathbf{X}$ is a collection of pointed $G$-spaces $\{X_V\}$ for every orthogonal $G$-representation $V$, with specified $G$-equivariant maps
$$S^{W-V}\sm X_V \rightarrow X_W$$
whenever $V$ is a subrepresentation of $W$. For example, if $X$ is a pointed $G$-space, then its \emph{genuine suspension spectrum} is the genuine $G$-spectrum
$$\Sigma^{\infty G}X=\{S^V\sm X\}_{V\in RO(G)}.$$
The \emph{geometric fixed point spectrum}, $\Phi^G\mathbf{X}$, is the spectrum whose $n$-th space is defined by
$$(\Phi^G\mathbf{X})_n=X_{n\rho_G}^G.$$
For each subgroup $H\subseteq G$, let $i_H^*\mathbf{X}$ denote the genuine $H$-spectrum given by restricting the action, and define $\Phi^H\mathbf{X}:= \Phi^Gi_H^*\mathbf{X}$. A map $f:\mathbf{X} \rightarrow \mathbf{Y}$ of genuine $G$-spectra is said to be an \emph{equivalence} if $\Phi^H i_H^* f:\Phi^H i_H^*\mathbf{X} \rightarrow \Phi^H i_H^*\mathbf{Y}$ is an equivalence of spectra for every subgroup $H\subseteq G$. Note that of $X$ is a pointed $G$-space, then $\Phi^G(\Sigma^{\infty G}X)=\Sigma^{\infty}(X^G)$.
\end{definition}
\begin{definition}\label{definition:promotingGspectra}
Any na{\"i}ve $G$-spectrum $\mathbf{X}=\{X_n\}_{n\ge 0}$ may be promoted to a genuine $G$-spectrum $\{X_V\}_{V\in RO(G)}$ as follows. For any orthogonal representation $V$, let $V^G$ denote the fixed points and $\overline{V}\subset V$ denote an orthogonal complement to $V^G$. Note that $\overline{V}^G=0$. Then we define pointed $G$-spaces $X_V$ indexed over the orthogonal representations $V$ by
$$X_V:= S^{\overline{V}}\sm X_{V^G}.$$
This construction defines a functor $i_*$ from na{\"i}ve $G$-spectra to genuine $G$-spectra, and this functor can be viewed as inverting the representation sphere $S^{\rreg_G}$. It is easily checked that
$$\Phi^G(i_*\mathbf{X})\simeq \mathbf{X}^G.$$

\end{definition}

Let $n$ be a positive integer. The $n$-th symmetric power functor $\Sp^n(-)$ may be applied to a na{\"i}ve or genuine $G$-spectrum by application to each component space. We decompose the geometric fixed points of the $n$-th symmetric power of a genuine suspension spectrum of a pointed $G$-space $X$ via a filtration of spectra
$$\xymatrix{\Sp^n(\Sigma^{\infty}X^G)=F_0\ar[r] & F_1\ar[r] & \cdots\ar[r] & F_m=\Phi^G(\Sp^n(\Sigma^{\infty G}X))}$$
where the cofiber of the map $F_{i-1} \rightarrow F_i$ is given by a smash product of three factors
$$\cof(F_{i-1} \rightarrow F_i) \simeq X^G\sm \Sp^{\lfloor n/c_i \rfloor}(\Sigma^{\infty}S^0)\sm \Pri^{G/H_i}(S^{\infty\rreg_G})$$
where $c_i$ is the index of the subgroup $H_i$ in $G$. This is Proposition \ref{prop:stablefinitesymmetricpowers}. Our proof relies on Lemmas \ref{lemma:finitesymmetricpowersstablygenuine}, \ref{lemma:finitesymmetricpowersstablynaive}, and \ref{lemma:genuineandnaivefixedpoints} about finite symmetric powers in the stable setting, stated below and proven at the end of this section. These are equivariant analogues to several lemmas in (\cite{AD}, Section 7).

\begin{lemma}\label{lemma:finitesymmetricpowersstablygenuine}
The natural transformation
$$(-)\sm \Sp^n(\Sigma^{\infty G}S^0) \rightarrow \Sp^n(\Sigma^{\infty G}(-))$$
is an equivalence of functors from pointed $G$-spaces to genuine $G$-spectra. In particular, this functor preserves cofiber sequences.
\end{lemma}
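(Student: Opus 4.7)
The plan is to induct on $n$. For $n=1$, $\Sp^1$ is the identity functor and the map in question is the canonical isomorphism $Y\sm \Sigma^{\infty G}S^0\cong \Sigma^{\infty G}Y$. For $n\ge 2$, I would compare cofiber sequences of genuine $G$-spectra obtained from the pointed-space cofiber sequence $\Sp^{n-1}(Z)\hookrightarrow \Sp^n(Z)\to Z^{\sm n}/\Sigma_n$ (valid for every pointed $G$-space $Z$):
$$\xymatrix{Y\sm \Sp^{n-1}(\Sigma^{\infty G}S^0)\ar[r]\ar[d] & Y\sm \Sp^n(\Sigma^{\infty G}S^0)\ar[r]\ar[d] & Y\sm Q_n\ar[d]\\ \Sp^{n-1}(\Sigma^{\infty G}Y)\ar[r] & \Sp^n(\Sigma^{\infty G}Y)\ar[r] & Q_n(Y)}$$
where $Q_n$ and $Q_n(Y)$ denote the genuine $G$-spectra with level-$V$ spaces $(S^V)^{\sm n}/\Sigma_n$ and $(S^V\sm Y)^{\sm n}/\Sigma_n$ respectively. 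Since smashing with $Y$ is exact and the inclusions $\Sp^{n-1}\hookrightarrow \Sp^n$ are levelwise cofibrations, both rows are cofiber sequences of genuine $G$-spectra. The leftmost column is an equivalence by the inductive hypothesis, so the five-lemma reduces the claim to showing that the rightmost map $Y\sm Q_n\to Q_n(Y)$ is an equivalence.

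Unpacking at level $V$, this map is the $\Sigma_n$-quotient of the identity on $S^{nV}$ smashed with the $n$-fold diagonal $Y\hookrightarrow Y^{\sm n}$, after writing $(S^V\sm Y)^{\sm n}=S^{nV}\sm Y^{\sm n}$ with its diagonal $\Sigma_n$-action. The diagonal itself is not a stable equivalence, but $\Sigma_n$ acts freely on the complement of the fat diagonal in $S^{nV}$, while the fat diagonal has equivariant codimension growing linearly with $V$; so taking $\Sigma_n$-quotients produces a map whose cofiber becomes arbitrarily highly connected as $V$ grows through multiples of $\rho_G$.

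The main obstacle is making this connectivity estimate precise in the genuine $G$-equivariant setting. For each subgroup $H\le G$ one must verify that the $H$-fixed-point map at level $V=k\rho_G$ has connectivity growing linearly in $k$. Since $\rho_G^H\ne 0$ for every $H\le G$, the fixed subspace $(S^{nV})^H=S^{nV^H}$ does grow with $k$, and stratifying $(S^V\sm Y)^{\sm n}/\Sigma_n$ by $\Sigma_n$-isotropy type reduces the claim to a finite family of nonequivariant connectivity estimates along each stratum, each handled along the lines of \cite{AD}, Section 7. Summing these estimates shows the cofiber of $Y\sm Q_n\to Q_n(Y)$ is stably trivial as a genuine $G$-spectrum, completing the induction.
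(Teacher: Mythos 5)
Your overall architecture is the same as the paper's: induct on $n$, use the cofiber sequence $\Sp^{n-1}(Z)\hookrightarrow\Sp^n(Z)\to Z^{\sm n}/\Sigma_n$ to reduce to the top subquotient, and then prove a connectivity estimate at level $k\rho_G$ that grows with $k$. The gap is in the one step where all the content lives. The justification you offer for the connectivity of $Y\sm Q_n\to Q_n(Y)$ --- that $\Sigma_n$ acts freely off the fat diagonal of $S^{nV}$ and the fat diagonal has growing codimension --- is not a proof and does not reflect the actual mechanism. The free locus is not negligible in the stable range: already for $Y=S^0$ it accounts for unboundedly much mod $p$ homology of the subquotient (this is where all of $\Sp^p(\Sigma^{\infty}S^0)/\Sp^{p-1}(\Sigma^{\infty}S^0)$ comes from), so no argument that discards it on codimension grounds can be correct. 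Moreover, on the isotropy strata you propose to work along, the comparison map is induced by the diagonal $Y\to Y^{\sm n}$, which is far from an equivalence; the estimate only emerges from how the strata interact (the transitive-isotropy locus is where the map is an isomorphism, and the rest is controlled by suspension/connectivity bounds), so ``summing nonequivariant estimates along each stratum'' as stated would fail. The mechanism in \cite{AD} and in the paper is a fixed-point dichotomy applied \emph{before} passing to $\Sigma_n$-orbits: for every subgroup $\Gamma\subseteq G\times\Sigma_n$, either $\Gamma\cap(1\times\Sigma_n)$ is transitive, in which case the map of $\Gamma$-fixed points is the identity, or it is nontransitive, in which case both sides are (roughly) $2\ell$-fold suspensions at level $\ell\rho_G$; connectivity on all $\Gamma$-fixed points then passes to the $\Sigma_n$-quotient.

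This points to the second, more specific gap: the equivariant input you cite is insufficient. You use only that $\rho_G^H\neq 0$, i.e.\ that $(S^{nV})^H=S^{nV^H}$ grows with $k$; this controls product subgroups $H\times\Psi\subseteq G\times\Sigma_n$ only. But the $H$-fixed points of a $\Sigma_n$-quotient (equivalently, the pre-quotient fixed-point argument) are governed by twisted/graph subgroups $\Gamma\subseteq G\times\Sigma_n$ projecting onto $H$, and the statement actually needed is that $(\rho_G\otimes\overline{\R^n})^{\Gamma}\neq 0$ whenever $\Gamma\cap(1\times\Sigma_n)$ is nontransitive. This is precisely the paper's Lemma \ref{lemma:representationfixedpoints}, and it requires a genuine argument (choose $v\in\rho_G$ with $\{gv\}_{g\in G}$ independent, a $\Gamma'$-fixed $w\in\overline{\R^n}$, and average $v\otimes w$ over $\Gamma$). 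Without it, your claim that the $H$-fixed-point map at level $k\rho_G$ has connectivity growing linearly in $k$ is unsupported. Your stratification route can be completed, but only after this fixed-point lemma (or an equivalent analysis of graph subgroups) is supplied; with it in hand, the cleaner path is the paper's: verify $(2\ell-1)$-connectivity of $X\sm(S^{\ell\rho_G})^{\sm n}\to X^{\sm n}\sm(S^{\ell\rho_G})^{\sm n}$ on all $\Gamma$-fixed points and then quotient.
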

\begin{lemma}\label{lemma:finitesymmetricpowersstablynaive}
The functor $\Sp^n(\Sigma^{\infty}(-))$ from pointed $G$-spaces to na{\"i}ve $G$-spectra preserves cofiber sequences.
\end{lemma}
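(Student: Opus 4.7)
The plan is to reduce the naive statement to the genuine version, Lemma \ref{lemma:finitesymmetricpowersstablygenuine}, together with the exactness of restriction-of-universe from genuine to naive $G$-spectra.

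First, I would identify the naive $G$-spectrum $\Sp^n(\Sigma^{\infty}X)$ with the underlying naive $G$-spectrum of the genuine $G$-spectrum $\Sp^n(\Sigma^{\infty G}X)$. Both have $k$-th space $\Sp^n(S^k\wedge X)$ with matching structure maps, since by Definition \ref{definition:promotingGspectra} the promotion of $\Sigma^{\infty}X$ to the genuine setting is $\Sigma^{\infty G}X$, and $\Sp^n$ is computed levelwise.

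Next, Lemma \ref{lemma:finitesymmetricpowersstablygenuine} identifies the functor $\Sp^n(\Sigma^{\infty G}(-))$ with $(-)\wedge \Sp^n(\Sigma^{\infty G}S^0)$ as functors to genuine $G$-spectra. Since smashing with a fixed spectrum preserves cofiber sequences, $\Sp^n(\Sigma^{\infty G}(-))$ sends cofiber sequences of pointed $G$-spaces to cofiber sequences of genuine $G$-spectra. Now forgetting to naive $G$-spectra also preserves cofiber sequences: at the level of pointed $G$-spaces, $(-)^H$ commutes with pushouts along $G$-cofibrations (using $G$-CW structure), so applying $(-)^H$ levelwise takes cofiber sequences of genuine $G$-spectra to cofiber sequences of spectra, which is precisely the condition defining cofiber sequences of naive $G$-spectra. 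Composing, $\Sp^n(\Sigma^{\infty}(-))$ preserves cofiber sequences of pointed $G$-spaces.

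The main subtlety is the identification in the first step, which is essentially bookkeeping. An alternative, self-contained route would mimic the classical nonequivariant argument: filter $\Sp^n(X)$ by subspaces parametrizing formal sums with at most $j$ summands outside $A$, identify the layers as $\Sp^{n-j}(A)_+\wedge ((X/A)^{\wedge j}/\Sigma_j)$, and use an equivariant connectivity estimate that grows with the suspension coordinate $k$ to verify that the natural map to the analogous filtration of $\Sp^n(X/A)$ becomes a naive stable equivalence on each layer. But the reduction via Lemma \ref{lemma:finitesymmetricpowersstablygenuine} is considerably cleaner.
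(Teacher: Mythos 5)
There is a genuine gap, and it sits exactly at your step ``forgetting to na{\"i}ve $G$-spectra also preserves cofiber sequences.'' In this paper's framework a map of na{\"i}ve $G$-spectra is an equivalence when it is one on \emph{spacewise} fixed points $\{X_n^H\}$ at integer levels, while a map of genuine $G$-spectra is an equivalence when it is one on \emph{geometric} fixed points $\{X_{n\rho_G}^G\}$; the point-set restriction to the trivial levels does not take genuine equivalences to na{\"i}ve equivalences. Your reduction needs precisely this: the sequence $\Sp^n(\Sigma^{\infty G}X)\to\Sp^n(\Sigma^{\infty G}Y)\to\Sp^n(\Sigma^{\infty G}Z)$ is a cofiber sequence of genuine spectra only up to genuine equivalence (via Lemma \ref{lemma:finitesymmetricpowersstablygenuine}), not a levelwise strict one, so to restrict it you must transport that equivalence along the forgetful functor. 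Taking the degenerate case $\ast\to B\to C$ of your principle with $B=X\sm\Sp^n(\Sigma^{\infty G}S^0)$ and $C=\Sp^n(\Sigma^{\infty G}X)$ would yield that $X\sm\Sp^n(\Sigma^{\infty}S^0)\to\Sp^n(\Sigma^{\infty}X)$ is a na{\"i}ve equivalence, which is exactly what the Note following Lemma \ref{lemma:finitesymmetricpowersstablynaive} says is false (e.g.\ when $X^G$ is contractible the two sides have different $G$-fixed points). If instead you interpret the underlying na{\"i}ve spectrum homotopically (so that restriction does preserve equivalences), then its fixed points are no longer the spacewise fixed points $\Sp^n(S^k\sm X)^H$ that the lemma and its later uses (Lemma \ref{lemma:genuineandnaivefixedpoints}, Proposition \ref{prop:stablefinitesymmetricpowers}) require, so the identification in your first step breaks instead. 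Either way the genuine statement cannot be made to do the work.

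The paper therefore argues directly: by induction on $n$ one reduces to the top layer $(-)^{\sm n}/\Sigma_n$, and checks connectivity of $((S^{\ell})^{\sm n}\sm Y^{\sm n}/X^{\sm n})^{\Gamma}\to((S^{\ell})^{\sm n}\sm Z^{\sm n})^{\Gamma}$ for subgroups $\Gamma\subset G\times\Sigma_n$. When the image of $\Gamma$ in $\Sigma_n$ is nontransitive, the fixed sphere $(S^{\ell n})^{\Gamma}$ is at least a $2\ell$-fold suspension, giving the estimate; when it is transitive, $(A^{\sm n})^{\Gamma}\cong A^H$ identifies the map with $Y^H/X^H\to Z^H$, which is an honest homotopy equivalence --- these diagonal contributions are \emph{not} highly connected and must be matched exactly, which is the real content. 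Your fallback sketch (filtering by the number of summands outside $A$ plus a connectivity estimate) is in the right spirit and close to this, but without the transitive/nontransitive case analysis a blanket ``connectivity grows with the suspension coordinate'' claim is not true of the relevant fixed-point spaces, so that route also needs the missing argument supplied.
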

\emph{Note:} It is NOT true that $X\sm \Sp^n(\Sigma^{\infty}S^0) \rightarrow \Sp^n(\Sigma^{\infty}X)$ is an equivalence! For example, if $X$ is a $G$-space whose fixed points are contractible, then the two na{\"i}ve $G$-spectra above have different $G$-fixed points.

\begin{lemma}\label{lemma:genuineandnaivefixedpoints}
For any nonnegative integer $\ell$, there is an inclusion of $G$-representation spheres $S^{\ell\rho_G} \hookrightarrow S^{\ell}\sm S^{\infty\rreg_G}$. The resulting natural transformation of functors from pointed $G$-spaces to spectra is an equivalence.
$$\Phi^G(\Sp^n(\Sigma^{\infty G}(-))) \rightarrow \Sp^n(\Sigma^{\infty}(S^{\infty\rreg_G}\sm (-)))^G$$
\end{lemma}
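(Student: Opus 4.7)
The plan is to reduce both sides of the claimed equivalence to $X^G \sm \Phi^G \Sp^n(\Sigma^{\infty G} S^0)$, using Lemma \ref{lemma:finitesymmetricpowersstablygenuine} together with the strong monoidality of $\Phi^G$. Using the splitting $\rho_G = \mathbf{1} + \rreg_G$, the inclusion $S^{\ell \rho_G} = S^\ell \sm S^{\ell \rreg_G} \hookrightarrow S^\ell \sm S^{\infty \rreg_G}$ is just the $S^\ell$-suspension of $S^{\ell \rreg_G} \hookrightarrow S^{\infty \rreg_G}$; applying $\Sp^n(-\sm X)^G$ levelwise and checking compatibility with structure maps produces the natural transformation in the statement.

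First I would identify the codomain $\Sp^n(\Sigma^{\infty}(S^{\infty \rreg_G} \sm X))^G$ with the geometric fixed point spectrum $\Phi^G \Sp^n(\Sigma^{\infty G}(S^{\infty \rreg_G} \sm X))$. At level $\ell$ the former has space $\Sp^n(S^\ell \sm S^{\infty \rreg_G} \sm X)^G$ and the latter has $\Sp^n(S^{\ell \rho_G} \sm S^{\infty \rreg_G} \sm X)^G$; these coincide via the absorption $S^{\ell \rreg_G} \sm S^{\infty \rreg_G} \cong S^{\infty \rreg_G}$, and the same absorption converts the genuine structure map (smashing with $S^{\rho_G} = S^1 \sm S^{\rreg_G}$) into naive suspension by $S^1$. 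Under this identification, the natural transformation of the lemma is precisely the map $\Phi^G \Sp^n(\Sigma^{\infty G} X) \to \Phi^G \Sp^n(\Sigma^{\infty G}(S^{\infty \rreg_G} \sm X))$ induced by the $G$-equivariant inclusion $X = S^0 \sm X \hookrightarrow S^{\infty \rreg_G} \sm X$.

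Then I would strip off $\Sp^n$ using Lemma \ref{lemma:finitesymmetricpowersstablygenuine}, which supplies natural equivalences $\Sp^n(\Sigma^{\infty G} Z) \simeq Z \sm \Sp^n(\Sigma^{\infty G} S^0)$. Applying the strong symmetric monoidal functor $\Phi^G$, together with $\Phi^G \Sigma^{\infty G} A \simeq \Sigma^\infty A^G$, both sides assume the form
\begin{align*}
\Phi^G \Sp^n(\Sigma^{\infty G} X) &\simeq X^G \sm \Phi^G \Sp^n(\Sigma^{\infty G} S^0), \\
\Phi^G \Sp^n(\Sigma^{\infty G}(S^{\infty \rreg_G} \sm X)) &\simeq (S^{\infty \rreg_G} \sm X)^G \sm \Phi^G \Sp^n(\Sigma^{\infty G} S^0).
\end{align*}
Since $\rreg_G$ has zero fixed subspace, $(S^{\infty \rreg_G})^G = S^0$, so $(S^{\infty \rreg_G} \sm X)^G = X^G$ naturally, and the basepoint inclusion $S^0 \hookrightarrow S^{\infty \rreg_G}$ induces the identity on $G$-fixed points. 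By naturality of Lemma \ref{lemma:finitesymmetricpowersstablygenuine}, the natural transformation of the lemma becomes the identity on $X^G \sm \Phi^G \Sp^n(\Sigma^{\infty G} S^0)$ after these identifications, hence is an equivalence.

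The main obstacle is the bookkeeping in the first step: verifying that the absorption $S^{\ell \rreg_G} \sm S^{\infty \rreg_G} \cong S^{\infty \rreg_G}$ is compatible with the structure maps of both spectra, and that under this absorption the inclusion $S^{\ell \rreg_G} \hookrightarrow S^{\infty \rreg_G}$ (which defines the natural transformation on the source) corresponds to the basepoint inclusion $S^0 \hookrightarrow S^{\infty \rreg_G}$ applied to the target. This is a direct unwinding of the identity $\ell \rreg_G + \infty \rreg_G = \infty \rreg_G$, but it is the one place where genuine care is required; thereafter the argument proceeds essentially formally.
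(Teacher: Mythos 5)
Your strategy is a genuinely different route from the paper's. The paper proves this lemma by the same hands-on technique as Lemmas \ref{lemma:finitesymmetricpowersstablygenuine} and \ref{lemma:finitesymmetricpowersstablynaive}: induction on $n$ via the layers $\Sp^n/\Sp^{n-1}=(-)^{\sm n}/\Sigma_n$, reducing to a levelwise connectivity estimate for the map $((S^{\ell}\sm S^{\ell\rreg_G})^{\sm n})^{\Gamma}\to ((S^{\ell}\sm S^{\infty\rreg_G})^{\sm n})^{\Gamma}$, handled by a case split on whether $(\rreg_G\otimes\R^n)^{\Gamma}$ vanishes. Your plan instead funnels everything through Lemma \ref{lemma:finitesymmetricpowersstablygenuine} and the strong monoidality of $\Phi^G$; the endgame (both sides become $X^G\sm\Phi^G\Sp^n(\Sigma^{\infty G}S^0)$ and the map becomes the identity because $(S^{\infty\rreg_G})^G=S^0$) is clean and correct.

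The problem is in your first step, and it is not a matter of bookkeeping that can be resolved by ``direct unwinding.'' You assert that the levelwise homeomorphisms
$$\phi_\ell:\Sp^n(S^{\ell\rho_G}\sm S^{\infty\rreg_G}\sm X)^G\;\cong\;\Sp^n(S^{\ell}\sm S^{\infty\rreg_G}\sm X)^G,$$
induced by absorptions $S^{\ell\rreg_G}\sm S^{\infty\rreg_G}\cong S^{\infty\rreg_G}$, assemble into an equivalence of spectra $\Phi^G\Sp^n(\Sigma^{\infty G}(S^{\infty\rreg_G}\sm X))\simeq\Sp^n(\Sigma^{\infty}(S^{\infty\rreg_G}\sm X))^G$. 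Compatibility with the structure maps would require $\phi_{\ell+1}\circ(\text{insert }0\in\rreg_G) = \phi_\ell$, i.e.\ each absorption isomorphism $(\ell+1)\rreg_G\oplus\infty\rreg_G\cong\infty\rreg_G$ would have to restrict, on the proper subspace $\ell\rreg_G\oplus\infty\rreg_G$ where the new $\rreg_G$-coordinate is set to zero, to the previous absorption isomorphism. But an isomorphism restricted to a proper subspace is never again an isomorphism, so no strict family $\{\phi_\ell\}$ exists. Your levelwise homeomorphisms therefore do \emph{not} define a map of spectra; they form a merely homotopy-coherent family, and making that coherence precise (e.g.\ via contractibility of the space of absorptions, or a colimit/cofinality argument comparing $\colim_m S^{m\rreg_G}$ along shifted inclusions) is the actual content here. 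This is close in difficulty to the statement you are trying to prove — which is likely why the paper sidesteps it entirely and argues by connectivity estimates at each level, never trying to identify $\Sp^n(\Sigma^{\infty}(S^{\infty\rreg_G}\sm X))^G$ with a geometric fixed point spectrum.

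To salvage your approach you would need to either (a) carry out the absorption at the level of colimit presentations $S^{\infty\rreg_G}=\colim_k S^{k\rreg_G}$ and produce a genuine zigzag of levelwise equivalences of spectra, or (b) replace step~1 by a comparison to $i_*\Sp^n(\Sigma^\infty(S^{\infty\rreg_G}\sm X))$ using Definition \ref{definition:promotingGspectra} and the formula $\Phi^G i_*\mathbf{X}\simeq\mathbf{X}^G$, after first constructing a map of genuine $G$-spectra $\Sp^n(\Sigma^{\infty G}X)\to i_*\Sp^n(\Sigma^\infty(S^{\infty\rreg_G}\sm X))$ and then verifying it is a $\Phi^G$-equivalence — but that verification is essentially the paper's connectivity argument over again.
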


Given these three lemmas, we state and prove the main proposition of this section.

\begin{proposition}\label{prop:stablefinitesymmetricpowers}
Let $H_0, \ldots, H_m$ be a subgroup ordering of $G$, and for each $i$ let $c_i=[G:H_i]$. Then the functor $\Phi^G(\Sp^n(\Sigma^{\infty G}(-)))$ from pointed $G$-spaces to spectra has a filtration by functors
$$\xymatrix{\Sp^n(\Sigma^{\infty}(-)^G)=F_0\ar[r] & F_1\ar[r] & \cdots\ar[r] & F_m= \Phi^G(\Sp^n(\Sigma^{\infty G}(-)))}$$
where $F_{i-1} \rightarrow F_i$ is the homotopy fiber of a fibration
$$F_i \rightarrow (-)^G\sm \Sp^{\lfloor n/c_i\rfloor}(\Sigma^{\infty}S^0)\sm \Pri^{G/H_i}(S^{\infty\rreg_G}).$$
\end{proposition}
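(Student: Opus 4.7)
The plan is to mirror the proof of Proposition \ref{prop:symmetricpowersfixedpoints} in the stable setting, using the three preparatory lemmas to pass freely between genuine $G$-spectra, naive $G$-spectra, and ordinary spectra. First I would use Lemma \ref{lemma:finitesymmetricpowersstablygenuine} together with the symmetric monoidality of $\Phi^G$ and the identity $\Phi^G(\Sigma^{\infty G}X) \simeq \Sigma^{\infty}(X^G)$ to obtain
$$\Phi^G(\Sp^n(\Sigma^{\infty G}X)) \simeq \Sigma^{\infty}(X^G) \sm \Phi^G(\Sp^n(\Sigma^{\infty G}S^0)).$$
This reduces the proposition to constructing a suitable filtration of $\Phi^G(\Sp^n(\Sigma^{\infty G}S^0))$; smashing such a filtration with $\Sigma^{\infty}(X^G)$ inserts the $(-)^G$ factor into the cofibers, and the base term becomes $\Sigma^{\infty}(X^G) \sm \Sp^n(\Sigma^{\infty}S^0) \simeq \Sp^n(\Sigma^{\infty}X^G)$ by the nonequivariant case of Lemma \ref{lemma:finitesymmetricpowersstablygenuine}.

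Next, using Lemma \ref{lemma:genuineandnaivefixedpoints}, I would rewrite $\Phi^G(\Sp^n(\Sigma^{\infty G}S^0))$ as $\Sp^n(\Sigma^{\infty}S^{\infty\rreg_G})^G$, and filter the $G$-space $S^{\infty\rreg_G}$ by the subgroup ordering: set $Z_i = \bigcup_{j\le i,\, H'\in [H_j]} (S^{\infty\rreg_G})^{H'}$, so that $Z_0 = (S^{\infty\rreg_G})^G = S^0$, $Z_m = S^{\infty\rreg_G}$, and the subgroup-ordering axioms guarantee that each subquotient $Z_i/Z_{i-1}$ has all non-basepoint isotropy conjugate to $H_i$. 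Defining $F_i = \Sp^n(\Sigma^{\infty}Z_i)^G$, the cofiber-preservation of $\Sp^n(\Sigma^{\infty}(-))$ on naive $G$-spectra (Lemma \ref{lemma:finitesymmetricpowersstablynaive}) together with the stable exactness of $(-)^G$ assembles into cofiber sequences $F_{i-1} \to F_i \to \Sp^n(\Sigma^{\infty}(Z_i/Z_{i-1}))^G$.

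The main work lies in identifying these cofibers, and the key input is a stable, finite-$n$ refinement of Lemma \ref{lemma:oneisotropygroup}: for a pointed $G$-space $W$ with all non-basepoint isotropy conjugate to $H$ and $c = [G:H]$, every $G$-invariant element of $\Sp^n(S^k \sm W)$ decomposes uniquely into at most $\lfloor n/c \rfloor$ orbits of size $c$, producing a homeomorphism
$$\Sp^n(S^k \sm W)^G \cong \Sp^{\lfloor n/c \rfloor}\bigl(S^k \sm \Pri^{G/H}(W)\bigr)$$
natural in $k$, and hence a spectrum-level equivalence $\Sp^n(\Sigma^{\infty}W)^G \simeq \Sp^{\lfloor n/c \rfloor}(\Sigma^{\infty}\Pri^{G/H}(W))$; here I use that $\Pri^{G/H}$ commutes with smashing by a trivial sphere. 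Applying this with $W = Z_i/Z_{i-1}$, together with the identification $\Pri^{G/H_i}(Z_i/Z_{i-1}) = \Pri^{G/H_i}(S^{\infty\rreg_G})$---which follows from the subgroup-ordering axiom that no conjugate of $H_j$ lies in $H_i$ for $j<i$, so the troublesome fixed subsets of $(S^{\infty\rreg_G})^{H_i}$ are exactly killed in passing to $Z_i/Z_{i-1}$---and finally the nonequivariant case of Lemma \ref{lemma:finitesymmetricpowersstablygenuine}, identifies the $i$-th cofiber as
$$\Sp^{\lfloor n/c_i \rfloor}(\Sigma^{\infty}S^0) \sm \Pri^{G/H_i}(S^{\infty\rreg_G}),$$
as desired. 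The hardest step will be this stable one-isotropy identification: one must check that the floor $\lfloor n/c \rfloor$ behaves correctly level-by-level and packages naturally into a spectrum-level equivalence compatible with the filtration.
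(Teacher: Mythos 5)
Your proposal is correct and follows essentially the same route as the paper: reduce to $X=S^0$ via Lemma \ref{lemma:finitesymmetricpowersstablygenuine}, replace $\Phi^G$ by naive fixed points of $\Sp^n(\Sigma^{\infty}S^{\infty\rreg_G})$ via Lemma \ref{lemma:genuineandnaivefixedpoints}, filter $S^{\infty\rreg_G}$ by isotropy using the subgroup ordering, and identify the cofibers using Lemma \ref{lemma:finitesymmetricpowersstablynaive} together with a one-isotropy-group argument. The only difference is cosmetic but welcome: you spell out the finite-$n$, level-by-level analogue of Lemma \ref{lemma:oneisotropygroup} giving $\Sp^n(\Sigma^{\infty}W)^G\simeq \Sp^{\lfloor n/c\rfloor}(\Sigma^{\infty}\Pri^{G/H}(W))$, a step the paper asserts without detail, and you correctly read the isotropy filtration as the cumulative union over $j\le i$.
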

\begin{proof}
Let $X$ be any pointed $G$-space. By Lemma \ref{lemma:finitesymmetricpowersstablygenuine}, the natural inclusion
$$X^G\sm \Phi^G(\Sp^n(\Sigma^{\infty G}S^0)) \rightarrow \Phi^G(\Sp^n(\Sigma^{\infty G}X)$$
is an equivalence of spectra. It therefore suffices to prove the proposition when $X=S^0$. By Lemma \ref{lemma:genuineandnaivefixedpoints},
$$\Phi^G(\Sp^n(\Sigma^{\infty G}S^0))\simeq \Sp^n(\Sigma^{\infty}(S^{\infty\rreg_G}))^G$$
so we instead compute $\Sp^n(\Sigma^{\infty}(S^{\infty\rreg_G}))^G$. Consider the isotropy filtration of $S^{\infty\rreg_G}$.
$$\xymatrix{S^0\ar[r] & C_1\ar[r] & C_2\ar[r] & \cdots\ar[r] & C_m\ar[r] & S^{\infty\rreg_G}} \hskip 0.3in ; \hskip 0.3in C_i=\bigcup\limits_{H'\in [H_i]}S^{\infty\rreg_G^{H'}} $$
where the cofiber of the inclusion $C_{i-1} \rightarrow C_i$ is denoted $S^{\infty\rreg_G}(H_i)$. Applying the functor $\Sp^n(\Sigma^{\infty}(-))^G$ to this diagram, one obtains a filtration
$$\xymatrix{\Sp^n(\Sigma^{\infty}S^0)\ar[r] & \Sp^n(\Sigma^{\infty}C_1)^G\ar[r] & \cdots\ar[r] & \Sp^n(\Sigma^{\infty}C_m)^G\ar[r] & \Sp^n(\Sigma^{\infty}S^{\infty\rreg_G})^G}$$
By Proposition \ref{lemma:finitesymmetricpowersstablynaive}, the cofiber of the inclusion $\Sp^n(\Sigma^{\infty}C_{i-1})^G \rightarrow \Sp^n(\Sigma^{\infty}C_i)^G$ is
$$\Sp^n(\Sigma^{\infty}S^{\infty\rreg_G}(H_i))^G\simeq \Sp^{\lfloor n/c_i\rfloor}(\Sigma^{\infty}\Pri^{G/H_i}(S^{\infty\rreg_G}))$$
which completes the proof.
\end{proof}
Since the associated fibration sequences in the mod $p$ symmetric powers split (via Proposition \ref{prop:modpsymmetricpowersfixedpoints}), we deduce the following result in a manner similar to the proof of Proposition \ref{prop:stablefinitesymmetricpowers} above.
\begin{proposition}\label{prop:modpstablefinitesymmetricpowers}
Let $H_0, \ldots, H_m$ be a subgroup ordering of $G$, and for each $i$ let $c_i=[G:H_i]$. There is an equivalence between the two functors from pointed $G$-spaces to nonequivariant spectra,
$$\Phi^G\Sp^n_{\Z/p}(\Sigma^{\infty G}(-)) \simeq (-)^G \sm \bigvee\limits_{i=0}^{m}(\Sp^{\lfloor n/c_i\rfloor}_{\Z/p}(\Sigma^{\infty}S^0)\sm \Pri^{G/H_i}(S^{\infty\rreg_G})).$$
\end{proposition}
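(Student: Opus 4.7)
The plan is to mimic the proof of Proposition \ref{prop:stablefinitesymmetricpowers} with $\Sp^n_{\Z/p}$ replacing $\Sp^n$, and then invoke Proposition \ref{prop:modpsymmetricpowersfixedpoints} (which requires $G$ to be a $p$-group) to promote the resulting filtration into a wedge-sum decomposition. I would first observe that the three technical lemmas \ref{lemma:finitesymmetricpowersstablygenuine}, \ref{lemma:finitesymmetricpowersstablynaive}, and \ref{lemma:genuineandnaivefixedpoints} carry over verbatim to the mod $p$ setting: since $\Sp^n_{\Z/p}$ is a quotient of $\Sp^n$ and fits into cofiber sequences built from the various $\Sp^k$, the proofs of these lemmas apply with only notational changes.

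With mod $p$ analogues in hand, I reduce to the case $X = S^0$ via the mod $p$ version of Lemma \ref{lemma:finitesymmetricpowersstablygenuine}, and then use the mod $p$ version of Lemma \ref{lemma:genuineandnaivefixedpoints} to identify $\Phi^G\Sp^n_{\Z/p}(\Sigma^{\infty G}S^0)$ with $\Sp^n_{\Z/p}(\Sigma^{\infty}S^{\infty\rreg_G})^G$. Filter $S^{\infty\rreg_G}$ by its isotropy filtration $C_0\subseteq C_1\subseteq\cdots\subseteq C_m$ whose cofibers have uniform isotropy of type $[H_i]$, apply $\Sp^n_{\Z/p}(\Sigma^{\infty}(-))^G$, and use the mod $p$ version of Lemma \ref{lemma:finitesymmetricpowersstablynaive} to identify the successive cofibers as $\Sp_{\Z/p}^{\lfloor n/c_i\rfloor}(\Sigma^{\infty}\Pri^{G/H_i}(S^{\infty\rreg_G}))$. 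Combining this with the nonequivariant mod $p$ version of Lemma \ref{lemma:finitesymmetricpowersstablygenuine} applied to the space $\Pri^{G/H_i}(S^{\infty\rreg_G})$, each cofiber becomes the desired smash product $\Sp_{\Z/p}^{\lfloor n/c_i\rfloor}(\Sigma^{\infty}S^0)\sm \Pri^{G/H_i}(S^{\infty\rreg_G})$. The floor $\lfloor n/c_i\rfloor$ appears because a single $G$-orbit of primitives of type $G/H_i$ accounts for $c_i = [G:H_i]$ summands.

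The main step that goes beyond Proposition \ref{prop:stablefinitesymmetricpowers} is the promotion of this filtration to a splitting. Here I invoke Proposition \ref{prop:modpsymmetricpowersfixedpoints}, which for $G$ a $p$-group provides an explicit transfer-style section $\Sp_{\Z/p}^{\infty}(\Pri^{G/H_i}(X))\to (\Sp_{\Z/p}^{\infty}(X))^G$ at the space level, splitting $(\Sp_{\Z/p}^{\infty}(-))^G$ as a Cartesian product over $i$. I anticipate the main obstacle to be verifying that this space-level splitting restricts cleanly to finite symmetric powers $\Sp_{\Z/p}^n$ and threads through the equivalence of Lemma \ref{lemma:genuineandnaivefixedpoints}, so that the sections in each filtration stage become compatible retractions on the spectrum side. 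Once this bookkeeping is done, the Cartesian product decomposition of spaces passes stably to the desired wedge-sum decomposition of spectra, completing the proof.
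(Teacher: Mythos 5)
Your proposal matches the paper's intended argument, which is given in a single sentence before the proposition: ``Since the associated fibration sequences in the mod $p$ symmetric powers split (via Proposition~\ref{prop:modpsymmetricpowersfixedpoints}), we deduce the following result in a manner similar to the proof of Proposition~\ref{prop:stablefinitesymmetricpowers} above.'' Your elaboration --- that the three supporting lemmas extend to $\Sp^n_{\Z/p}$ because its layers sit in cofiber sequences to which the same fixed-point connectivity estimates apply, and that the section of Proposition~\ref{prop:modpsymmetricpowersfixedpoints} built from the orbit-sum $x \mapsto \sum_{g\in G/H_i}gx$ restricts compatibly to the finite stages --- fills in exactly the bookkeeping the paper leaves implicit.
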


We now prove Lemmas \ref{lemma:finitesymmetricpowersstablygenuine}, \ref{lemma:finitesymmetricpowersstablynaive}, and \ref{lemma:genuineandnaivefixedpoints}.

\begin{lemma}\label{lemma:representationfixedpoints}
Let $\overline{\R^n}$ denote the reduced standard $(n-1)$-dimensional representation of $\Sigma_n$. Let $\Gamma$ be a subgroup of the Cartesian product $G\times \Sigma_n$ such that $\Gamma\cap (1\times \Sigma_n)$ is nontransitive. Then the $\Gamma$-fixed points of the representation $\rho_G\otimes \overline{\R^n}$ are nonzero.
\end{lemma}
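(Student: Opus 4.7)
The plan is to model everything as permutation representations and count orbits. The regular representation $\rho_G = \R[G]$ is the permutation representation of $G$ on itself by left translation, and $\R^n$ is the permutation representation of $\Sigma_n$ on $\{1,\ldots,n\}$. The tensor product $\rho_G \otimes \R^n$ is therefore the permutation representation of $G \times \Sigma_n$ on the product set $G \times \{1,\ldots,n\}$, and the dimension of its $\Gamma$-fixed points equals the number of $\Gamma$-orbits on this set. Using the splitting $\R^n \cong \R \oplus \overline{\R^n}$ as $\Sigma_n$-representations, I have
\[
\dim(\rho_G \otimes \overline{\R^n})^\Gamma = \dim(\rho_G \otimes \R^n)^\Gamma - \dim(\rho_G)^\Gamma,
\]
so it suffices to show that the first term strictly exceeds the second.

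Let $\pi:\Gamma \to G$ and $\pi':\Gamma \to \Sigma_n$ be the projections, set $\Gamma_G = \pi(\Gamma)$, and let $N = \ker\pi = \Gamma \cap (1 \times \Sigma_n)$, viewed as a subgroup of $\Sigma_n$. Since $\Gamma_G$ acts on $G$ freely by left translation, the number of $\Gamma_G$-orbits on $G$ is exactly $|G|/|\Gamma_G|$, so $\dim (\rho_G)^\Gamma = |G|/|\Gamma_G|$.

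Next I would analyze the $\Gamma$-orbits on $G \times \{1,\ldots,n\}$ via the equivariant projection $\pi_G:G\times\{1,\ldots,n\} \to G$. Since $\Gamma_G$ acts freely on $G$, the stabilizer in $\Gamma$ of any element $g_0 \in G$ under the first-coordinate action is exactly $N$. By the standard orbit-fiber correspondence, the $\Gamma$-orbits sitting over a given $\Gamma_G$-orbit $\Gamma_G \cdot g_0 \subset G$ biject with the $N$-orbits on the fiber $\{g_0\}\times\{1,\ldots,n\} \cong \{1,\ldots,n\}$. Thus the total count is
\[
\dim(\rho_G \otimes \R^n)^\Gamma \;=\; \frac{|G|}{|\Gamma_G|}\cdot (\#\text{$N$-orbits on }\{1,\ldots,n\}).
\]

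The hypothesis that $N$ is nontransitive on $\{1,\ldots,n\}$ means the number of $N$-orbits is at least $2$. Substituting,
\[
\dim(\rho_G \otimes \overline{\R^n})^\Gamma \;\geq\; \frac{2|G|}{|\Gamma_G|} - \frac{|G|}{|\Gamma_G|} \;=\; \frac{|G|}{|\Gamma_G|} \;>\; 0,
\]
which finishes the proof. There is no real obstacle here; the only thing to be careful about is the orbit-fiber correspondence over a non-free base action, but since $\Gamma_G$ acts freely on $G$ this is straightforward.
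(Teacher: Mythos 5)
Your proof is correct, and it takes a genuinely different route from the paper's. The paper argues constructively: it picks a vector $w\in\overline{\R^n}$ fixed by $\Gamma'=\Gamma\cap(1\times\Sigma_n)$ (which exists because $\Gamma'$ is nontransitive), picks $v\in\rho_G$ whose $G$-translates are linearly independent, averages $v\otimes w$ over $\Gamma$, and shows the resulting $\Gamma$-invariant vector is nonzero by rewriting the sum as $\sum_{h\in H}hv\otimes(|\Gamma'|f(h)w)$ and invoking the linear independence of $\{hv\}_{h\in H}$. You instead treat $\rho_G\otimes\R^n$ as the permutation representation of $G\times\Sigma_n$ on $G\times\{1,\ldots,n\}$ and count orbits: $\dim(\rho_G\otimes\R^n)^\Gamma$ is the number of $\Gamma$-orbits, which by the fibering over $G$ (with free $\Gamma_G$-action on the base) equals $\frac{|G|}{|\Gamma_G|}\cdot(\#N\text{-orbits on }\{1,\dots,n\})$, and then subtract $\dim\rho_G^\Gamma=\frac{|G|}{|\Gamma_G|}$. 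Both proofs lean on the same two ingredients — nontransitivity of $N$ and freeness of $G$ on itself — but your version is a global dimension count while the paper's is a local construction of a single fixed vector. Your approach actually yields the exact formula $\dim(\rho_G\otimes\overline{\R^n})^\Gamma = \frac{|G|}{|\Gamma_G|}\bigl((\#N\text{-orbits})-1\bigr)$, a slightly stronger, quantitative statement; the paper's averaging argument is more explicit and would adapt more readily to settings where one needs an actual invariant vector rather than just a dimension bound.
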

\begin{proof}
Let $\Gamma'=\Gamma\cap (1\times \Sigma_n)$. Since $\Gamma'$ is nontransitive, there is a vector $w\in \overline{\R^n}$ which is fixed under $\Gamma'$. Pick any vector $v\in \rho_G$ such that the images $\{gv\}_{g\in G}$ are linearly indepdent, and consider the vector
$$u:= \sum\limits_{\gamma\in \Gamma}\gamma(v\otimes w)$$
Clearly $u$ is fixed by $\Gamma$, so it suffices to prove that $u$ is nonzero. Let $H$ denote the projection of $\Gamma$ onto $G$, and let $\Psi$ denote the projection of $\Gamma$ onto $\Sigma_n$. It is easily checked that $\Gamma'$ is a normal subgroup of $\Psi$, and so $\Gamma$ may be thought of as the graph of a homomorphism $f:H \rightarrow \Psi/\Gamma'$. Then
$$u=\sum\limits_{\gamma\in\Gamma}\gamma(v\otimes w)=\sum\limits_{h\in H}hv\otimes (|\Gamma'|\cdot f(h)w)$$
which is clearly nonzero because the images $\{hv\}_{h\in H}$ are linearly independent vectors in the representation $\rho_G$.
\end{proof}

\begin{proof}[Proof of Lemma \ref{lemma:finitesymmetricpowersstablygenuine}]
We use induction on $n$. The case $n=1$ is a tautology. Now consider general $n$. We will show that for any positive integer $\ell$ and any subgroup $H\subset G$, the natural map
$$(X\sm \Sp^n(S^{\ell\rho_G}))^H \rightarrow (\Sp^n(X\sm S^{\ell\rho_G}))^H$$
is an isomorphism on homotopy groups up through dimension $(2\ell-1)$. Note that for any pointed $G$-space $Y$, $\Sp^n(Y)/\Sp^{n-1}(Y)\simeq Y^{\sm n}/\Sigma_n$, and so it suffices for us to show that the natural map
$$X\sm (S^{\ell\rho_G})^{\sm n}/\Sigma_n \rightarrow (X\sm S^{\ell\rho_G})^{\sm n}/\Sigma_n$$
is $(2\ell-1)$-connected. To prove this, it suffices to show that for any subgroup $\Gamma\subset G\times \Sigma_n$, the map of fixed point spaces
$$(X\sm (S^{\ell\rho_G})^{\sm n})^{\Gamma} \rightarrow (X^{\sm n}\sm (S^{\ell\rho_G})^{\sm n})^{\Gamma}$$
is $(2\ell-1)$-connected. Let $H$ denote the projection of $\Gamma$ onto $G$. The two terms above can be rewritten as follows.
$$X^H\sm (S^{\ell\rho_G})^H\sm (S^{\ell(\rho_G\otimes \overline{\R^n})})^{\Gamma} \rightarrow (X^{\sm n})^{\Gamma}\sm (S^{\ell\rho_G})^H\sm (S^{\ell(\rho_G\otimes \overline{\R^n})})^{\Gamma}$$
If $\Gamma\cap (1\times \Sigma_n)$ is transitive, then $(X^{\sm n})^{\Gamma}=X^H$ and the map above is the identity map, so we are done. Suppose instead that $\Gamma\cap (1\times \Sigma_n)$ is nontransitive. Then $(S^{\ell\rho_G})^H$ has dimension at least $i$, and by Lemma \ref{lemma:representationfixedpoints}, $(S^{\ell(\rho_G\otimes \overline{\R^n})})^{\Gamma}$ has dimension at least $i$. Thus, the two spaces above are each at least a $2\ell$-fold suspension, and therefore the map shown is automatically $(2\ell-1)$-connected.
\end{proof}

\begin{proof}[Proof of Lemma \ref{lemma:finitesymmetricpowersstablynaive}]
We use induction on $n$. The base case $n=1$ is a tautology. Now consider general $n$. Let $X \rightarrow Y \rightarrow Z$ be a cofiber sequence of pointed $G$-spaces. By an argument analogous to the previous lemma, it suffices to show that for any subgroup $\Gamma\subset G\times \Sigma_n$, the map of fixed point spaces
$$((S^{\ell})^{\sm n}\sm Y^{\sm n}/X^{\sm n})^{\Gamma} \rightarrow ((S^{\ell})^{\sm n} \sm Z^{\sm n})^{\Gamma}$$
is $(2\ell-1)$-connected. Define $\Psi$ to be the projection of $\Gamma$ onto $\Sigma_n$. If $\Psi$ is nontransitive, then $(S^{\ell n})^{\Gamma}=S^{\ell\cdot o(\Psi)}$ where $o(\Psi)$ is the number of orbits of $\{1, \ldots, n\}$ under the action of $\Psi$. Then the two spaces above are each at least a $2\ell$-fold suspension, and the map above is automatically $(2\ell-1)$-connected.

Suppose that $\Psi$ is transitive. Let $H=\Gamma\cap (G\times 1)$. Then for any $G$-space $A$, the map
$$f:(A^{\sm n})^{\Gamma} \rightarrow A^H,\hskip 0.6in f(a_1, \ldots, a_n)=a_1$$
is a homeomorphism. This is true because $a_1$ can be freely chosen to be any $H$-fixed point of $A$, and each $a_i$ is uniquely determined by the point $a_1$. Therefore, $(Y^{\sm n}/X^{\sm n})^{\Gamma}\simeq Y^H/X^H$ and $(Z^{\sm n})^{\Gamma}\simeq Z^H$. The pointed spaces $Y^H/X^H$ and $Z^H$ are homotopy equivalent, and therefore the map of $\Gamma$-fixed points is an equivalence.
\end{proof}

\begin{proof}[Proof of Lemma \ref{lemma:genuineandnaivefixedpoints}]
By induction on $n$, it suffices to prove that the map of pointed spaces
$$((S^{\ell}\sm S^{\ell\rreg_G})^{\sm n})^{\Gamma} \rightarrow ((S^{\ell}\sm S^{\infty\rreg_G})^{\sm n})^{\Gamma}$$
is a $(2\ell-1)$-equivalence. If the $\Gamma$-fixed points of the representation $(\rreg_G\otimes \mathbb{R}^n)$ are zero, then the above map is an equivalence. If the $\Gamma$-fixed points of the representation $(\rreg_G\otimes \mathbb{R}^n)$ are nonzero, then both of the spaces above are $2\ell$-fold suspensions, and the map is a $(2\ell-1)$-equivalence.
\end{proof}


\subsection{Equivariant Eilenberg-Maclane Spectra}
\label{subsec:equivarianteilenbergmaclanespectra}

Let $X$ be a based $G$-CW complex and let $M$ be a $G$-module. Recall, either from Section \ref{subsec:symmetricpowerswithcoefficients} or from (\cite{DS}, Definition 2.1) that $M\otimes X$ denotes the free topological $M$-module over $X$ where $G$ acts diagonally. When $M=\Z$, one obtains a $G$-space weakly equivalent to the infinite symmetric power of $X$, and when $M=\Z/p$, one obtains the infinite mod $p$ symmetric power of $X$. These topological modules were studied in the papers \cite{LF} and \cite{DS}, where it was shown that the equivariant homotopy groups of $M\otimes X$ encode the equivariant homology groups of $X$.

\begin{definition}
Let $G$ be a finite group, and let $\underline{M}$ be a Mackey functor for the group $G$. Then there is a genuine $G$-spectrum $H\underline{M}$, called the \emph{Eilenberg-Maclane spectrum} for $\underline{M}$, which represents the functor for Bredon (co)homology with coefficients in $\underline{M}$.
\end{definition}

We will not use equivariant homotopy groups, equivariant homology groups, or Mackey functors in this paper, and so we will not define them --- the reader may consult \cite{MaG} for a reference. However, our main result is motivated by a desire to decompose the equivariant Eilenberg-Maclane spectrum $H\underline{\F}_p$, which is the genuine $G$-spectrum representing Bredon cohomology with coefficients in the trivial $G$-module $\F_p$. Thus, we illuminate the connection to mod $p$ symmetric powers.

The primary result we are interested in is (\cite{DS}, Theorem 1.1), which states that $M\otimes X$ is an equivariant infinite loop space and there is an equivalence natural in both the $G$-space $X$ and the $G$-module $M$,
$$\pi_V^G(M\otimes X) \cong \tilde{H}_V^G(X;\underline{M}).$$
When $G$ is the trivial group, the above result reduces to the classical Dold-Thom theorem. If we let $X$ denote the representation sphere $S^V$, then one obtains an equivalence of equivariant infinite loop spaces
$$M\otimes S^V \simeq K(\underline{M},V).$$
Using naturality, we obtain an equivalence of genuine $G$-spectra
$$M\otimes \Sigma^{\infty G}S^0 \simeq H\underline{M}.$$
If $M$ is a ring with a unit $1\in M$, then $H\underline{M}$ is a ring spectrum, and the inclusion $\Sigma^{\infty G}S^0 \hookrightarrow M\otimes \Sigma^{\infty G}S^0 \simeq H\underline{M}$ is the unit of the genuine $G$-spectrum $H\underline{M}$. The following corollary is immediate.
\begin{corollary}\label{cor:equivariantEilenbergMacLane}
There are filtrations of genuine $G$-spectra
$$\Sigma^{\infty G}S^0 \simeq \Sp^1(\Sigma^{\infty G}S^0) \subseteq \Sp^2(\Sigma^{\infty G}S^0) \subseteq \Sp^3(\Sigma^{\infty G}S^0) \subseteq \cdots\subseteq \Sp^{\infty}(\Sigma^{\infty G}S^0)\simeq H\underline{\Z}$$
$$\Sigma^{\infty G}S^0 \simeq \Sp_{\Z/p}^1(\Sigma^{\infty G}S^0) \subseteq \Sp_{\Z/p}^2(\Sigma^{\infty G}S^0) \subseteq \Sp_{\Z/p}^3(\Sigma^{\infty G}S^0) \subseteq \cdots\subseteq \Sp_{\Z/p}^{\infty}(\Sigma^{\infty G}S^0)\simeq H\underline{\F}_p.$$
The multiplication map on finite symmetric powers (resp. finite mod $p$ symmetric powers) in Definition \ref{definition:symmetricpowers} filters the ring structure of $H\underline{\Z}$ (resp. $H\underline{\F}_p$),
$$\Sp^m(\Sigma^{\infty G}S^0)\sm \Sp^n(\Sigma^{\infty G}S^0) \rightarrow \Sp^{mn}(\Sigma^{\infty G}S^0)$$
$$ \Sp_{\Z/p}^m(\Sigma^{\infty G}S^0)\sm \Sp_{\Z/p}^n(\Sigma^{\infty G}S^0) \rightarrow \Sp_{\Z/p}^{mn}(\Sigma^{\infty G}S^0).$$
\end{corollary}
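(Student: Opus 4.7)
The plan is to package together the Dos Santos--Lima-Filho identification $M \otimes \Sigma^{\infty G}S^0 \simeq H\underline{M}$ discussed just above the statement with the elementary observation that finite symmetric powers assemble into a levelwise filtration of the infinite symmetric power. The corollary is essentially a bookkeeping statement once these two ingredients are in hand.

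First I would specialise the equivalence $M\otimes \Sigma^{\infty G}S^0\simeq H\underline{M}$ to $M=\Z$ and $M=\Z/p$ to obtain the right-hand endpoints of the two filtrations. To pass from $\Z\otimes \Sigma^{\infty G}S^0$ back to $\Sp^{\infty}(\Sigma^{\infty G}S^0)$, I would invoke the remark in Subsection \ref{subsec:symmetricpowerswithcoefficients}: for each representation sphere $S^V$ every fixed-point space $(S^V)^H$ is connected, so the canonical inclusion $\Sp^{\infty}(S^V)\hookrightarrow \Z\otimes S^V$ is a $G$-weak equivalence. These equivalences are compatible with the structure maps $S^{W-V}\sm (-) \rightarrow (-)$ because those structure maps act coordinatewise on formal sums, so they assemble into an equivalence of genuine $G$-spectra.

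Next I would verify that each $\Sp^n(\Sigma^{\infty G}S^0)$ sits as a genuine $G$-subspectrum of $\Sp^{n+1}(\Sigma^{\infty G}S^0)$. At the representation level $V$, the basepoint-extension inclusion $\Sp^{n-1}(S^V) \hookrightarrow \Sp^n(S^V)$ from Definition \ref{definition:symmetricpowers} is a $G$-equivariant closed inclusion, and the structure map $S^W \sm \Sp^n(S^V) \rightarrow \Sp^n(S^{V+W})$ commutes with $n\mapsto n+1$ because smashing with $S^W$ distributes over the formal sum defining $\Sp^n$. Taking the levelwise quotient by the $\Z/p$ relation yields the analogous filtration for $H\underline{\F}_p$ and preserves the inclusions.

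Finally, for the multiplicative assertion I would apply property (3) after Definition \ref{definition:symmetricpowers}, the distributivity map $\Sp^m(X) \sm \Sp^n(Y) \rightarrow \Sp^{mn}(X\sm Y)$, specialised to $X=S^V$ and $Y=S^W$ with $S^V\sm S^W \simeq S^{V+W}$. This gives a filtered pairing of genuine $G$-spectra $\Sp^m(\Sigma^{\infty G}S^0)\sm \Sp^n(\Sigma^{\infty G}S^0) \rightarrow \Sp^{mn}(\Sigma^{\infty G}S^0)$, and passes to the mod $p$ quotient. In the colimit it agrees with the ring multiplication on $H\underline{\Z}$ (resp.\ $H\underline{\F}_p$) transported across the Dos Santos model, since the unit $\Sigma^{\infty G}S^0\hookrightarrow H\underline{\Z}$ factors tautologically through $\Sp^1(\Sigma^{\infty G}S^0)$. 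There is no genuinely hard step here; the only point requiring care is checking that the Dos Santos identification is natural enough in $M$ to carry the multiplicative structure, which is immediate from the functoriality of $M\otimes (-)$ in the module $M$.
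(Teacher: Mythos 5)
Your proposal is correct and follows essentially the same route as the paper, which deduces the corollary directly from the Dos Santos identification $M\otimes \Sigma^{\infty G}S^0\simeq H\underline{M}$ together with the evident filtration of $M\otimes(-)$ by finite symmetric powers and the multiplication map of Definition \ref{definition:symmetricpowers}. The extra details you supply (levelwise closed inclusions, compatibility with structure maps, connectivity of fixed points of representation spheres) are exactly the bookkeeping the paper leaves implicit when it calls the corollary immediate.
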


Thus, our work in this section yields computations of the geometric fixed point spectra $\Phi^G(H\underline{\Z})$ and $\Phi^G(H\underline{\F}_p)$.
\begin{proposition}\label{prop:geometricfixedpointsofHZ}
Let $H_0, \ldots, H_m$ be a subgroup ordering of $G$. The geometric fixed point spectrum $\Phi^G(H\underline{\Z})$ has a filtration by $H\Z$-modules,
$$\xymatrix{H\Z=F_0\ar[r] & F_1\ar[r] & \cdots\ar[r] & F_m= \Phi^G(H\underline{\Z})}$$
where $F_{i-1} \rightarrow F_i$ is the homotopy fiber of a fibration of $H\Z$-modules
$$F_i \rightarrow H\Z\sm \Pri^{G/H_i}(S^{\infty\rreg_G}).$$
\end{proposition}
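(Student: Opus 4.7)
The plan is to derive the proposition by passing to the colimit in the finite symmetric power filtration described in Proposition \ref{prop:stablefinitesymmetricpowers}. By Corollary \ref{cor:equivariantEilenbergMacLane}, we have $H\underline{\Z}\simeq \colim_n \Sp^n(\Sigma^{\infty G}S^0)$, and since the geometric fixed point functor $\Phi^G$ commutes with sequential colimits along cofibrations, it suffices to exhibit a compatible sequence of filtrations on each $\Phi^G(\Sp^n(\Sigma^{\infty G}S^0))$ and to take a colimit in $n$.

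First, I would apply Proposition \ref{prop:stablefinitesymmetricpowers} with $X=S^0$ to each finite stage $\Sp^n(\Sigma^{\infty G}S^0)$. For each $n$, this gives a length-$m$ filtration
\[
\xymatrix{\Sp^n(\Sigma^{\infty}S^0)=F_0^{(n)}\ar[r] & F_1^{(n)}\ar[r] & \cdots\ar[r] & F_m^{(n)}=\Phi^G(\Sp^n(\Sigma^{\infty G}S^0))}
\]
where $F_{i-1}^{(n)}\to F_i^{(n)}$ is the homotopy fiber of a fibration $F_i^{(n)} \to \Sp^{\lfloor n/c_i\rfloor}(\Sigma^{\infty}S^0)\sm \Pri^{G/H_i}(S^{\infty\rreg_G})$. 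The inclusions $\Sp^{n}\hookrightarrow \Sp^{n+1}$ are compatible with the constructions in Proposition \ref{prop:stablefinitesymmetricpowers}, producing a sequential diagram of length-$m$ filtrations.

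Next I would take the colimit in $n$. Setting $F_i := \colim_n F_i^{(n)}$, one gets $F_m=\Phi^G(H\underline{\Z})$ by commuting $\Phi^G$ with the sequential colimit, and $F_0=\colim_n \Sp^n(\Sigma^{\infty}S^0)\simeq H\Z$. Because $\lfloor n/c_i\rfloor\to\infty$ as $n\to\infty$, the target of each fibration stabilizes to
\[
\colim_n \Sp^{\lfloor n/c_i\rfloor}(\Sigma^{\infty}S^0)\sm \Pri^{G/H_i}(S^{\infty\rreg_G}) \;\simeq\; H\Z\sm \Pri^{G/H_i}(S^{\infty\rreg_G}).
\]
Since homotopy fiber sequences are preserved under filtered colimits of spectra, each $F_{i-1}\to F_i$ remains the homotopy fiber of $F_i\to H\Z\sm \Pri^{G/H_i}(S^{\infty\rreg_G})$, yielding the claimed filtration.

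Finally, to upgrade the filtration to one of $H\Z$-modules I would invoke the multiplicative structure. The multiplication maps on symmetric powers from Corollary \ref{cor:equivariantEilenbergMacLane} make the filtration $\{\Sp^n(\Sigma^{\infty G}S^0)\}$ into a filtered ring spectrum, with $\Sp^1(\Sigma^{\infty G}S^0)=\Sigma^{\infty G}S^0$ as the unit; after applying $\Phi^G$ and stabilizing, the bottom of the filtration becomes the unit $H\Z\to \Phi^G(H\underline{\Z})$, and the resulting $H\Z$-module structure on $\Phi^G(H\underline{\Z})$ restricts to compatible $H\Z$-module structures on each $F_i$ and on each target $H\Z\sm \Pri^{G/H_i}(S^{\infty\rreg_G})$ (where the module action is the evident one on the left factor). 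The main subtlety to verify is that the fibration maps constructed in Proposition \ref{prop:stablefinitesymmetricpowers} are $H\Z$-linear in the colimit, which follows by tracking the multiplication map through the construction in Lemma \ref{lemma:finitesymmetricpowersstablygenuine}, since the isomorphism $X\sm \Sp^n(\Sigma^{\infty G}S^0)\simeq \Sp^n(\Sigma^{\infty G}X)$ is induced by the action of $\Sp^n(\Sigma^{\infty G}S^0)$ on itself.
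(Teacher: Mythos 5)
Your proposal is correct and follows essentially the same route as the paper, whose entire proof is ``Apply Proposition \ref{prop:stablefinitesymmetricpowers}''; you have simply spelled out the colimit over $n$ (using $\lfloor n/c_i\rfloor\to\infty$ and preservation of (co)fiber sequences under filtered colimits) and the $H\Z$-module structure that the paper leaves implicit. No gaps to report.
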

\begin{proof}
Apply Proposition \ref{prop:stablefinitesymmetricpowers}.
\end{proof}
\begin{proposition}\label{prop:geometricfixedpointsofHF_p}
Let $H_0, \ldots, H_m$ be a subgroup ordering of $G$. There is an equivalence of $H\F_p$-modules
$$\Phi^G(H\underline{\F}_p)\simeq \bigvee\limits_{i=0}^{m}(H\F_p\sm \Pri^{G/H_i}(S^{\infty\rreg_G})).$$
\end{proposition}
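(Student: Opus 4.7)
The strategy is direct: assemble the finite-level decomposition of Proposition \ref{prop:modpstablefinitesymmetricpowers} and pass to the colimit, using Corollary \ref{cor:equivariantEilenbergMacLane} to identify the result with $\Phi^G(H\underline{\F}_p)$.

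First, I would apply Proposition \ref{prop:modpstablefinitesymmetricpowers} to the pointed $G$-space $X = S^0$. Since $(S^0)^G = S^0$ and smashing with $S^0$ is the identity, this gives for each $n\ge 1$ an equivalence
$$\Phi^G\Sp_{\Z/p}^n(\Sigma^{\infty G}S^0) \simeq \bigvee_{i=0}^{m}\bigl(\Sp_{\Z/p}^{\lfloor n/c_i\rfloor}(\Sigma^{\infty}S^0)\sm \Pri^{G/H_i}(S^{\infty\rreg_G})\bigr).$$
Crucially, the proof of Proposition \ref{prop:modpstablefinitesymmetricpowers} constructs these splittings naturally and compatibly with the filtration inclusions $\Sp_{\Z/p}^n\hookrightarrow \Sp_{\Z/p}^{n+1}$: on the right hand side, the inclusion is the wedge of the evident inclusions $\Sp_{\Z/p}^{\lfloor n/c_i\rfloor}\hookrightarrow \Sp_{\Z/p}^{\lfloor (n+1)/c_i\rfloor}$ (which is either the identity or the standard inclusion, depending on divisibility).

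Next, I would pass to the colimit in $n$. The geometric fixed points functor $\Phi^G$ commutes with filtered colimits of genuine $G$-spectra, and Corollary \ref{cor:equivariantEilenbergMacLane} identifies $\colim_n \Sp_{\Z/p}^n(\Sigma^{\infty G}S^0) \simeq H\underline{\F}_p$. For each fixed $i$, the subcolimit $\colim_n \Sp_{\Z/p}^{\lfloor n/c_i\rfloor}(\Sigma^{\infty}S^0) \simeq \Sp_{\Z/p}^{\infty}(\Sigma^{\infty}S^0)\simeq H\F_p$ by cofinality. Smashing a sequential colimit with the fixed spectrum $\Pri^{G/H_i}(S^{\infty\rreg_G})$ commutes with the colimit, and a finite wedge commutes with colimits as well. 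This yields the desired wedge decomposition
$$\Phi^G(H\underline{\F}_p) \simeq \bigvee_{i=0}^{m}\bigl(H\F_p\sm \Pri^{G/H_i}(S^{\infty\rreg_G})\bigr).$$

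For the $H\F_p$-module structure, I would note that the unit map $\Sigma^{\infty G}S^0 \to H\underline{\F}_p$ induces, on geometric $G$-fixed points, a map $\Sigma^\infty S^0 \to \Phi^G(H\underline{\F}_p)$ which factors through the $i=0$ summand (corresponding to $H_0=G$, where $\Pri^{G/G}(S^{\infty\rreg_G})=(S^{\infty\rreg_G})^G=S^0$), exhibiting $\Phi^G(H\underline{\F}_p)$ as a ring spectrum under $H\F_p$. The section maps constructed in Proposition \ref{prop:modpsymmetricpowersfixedpoints} are obtained by $\Z/p$-linear extension, so they intertwine the addition/scalar multiplication that gives the $H\F_p$-module structure.

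I do not anticipate significant obstacles: the main content is already packaged in Proposition \ref{prop:modpstablefinitesymmetricpowers}, and the remaining work is to verify naturality in $n$ of the splitting (so that the colimit decomposes termwise) and to track the module structure through the colimit. The one point requiring a small sanity check is that the shift in the filtration index by the factor $c_i = [G:H_i]$ does not obstruct cofinality when passing to $n=\infty$; since each $c_i$ is finite and fixed, this is automatic.
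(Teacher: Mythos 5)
Your proposal is correct and follows essentially the same route as the paper, whose proof is simply ``Apply Proposition \ref{prop:modpstablefinitesymmetricpowers}''; you have merely spelled out the implicit steps (naturality in $n$, passage to the colimit, cofinality, and the $H\F_p$-module structure) that the paper leaves unstated.
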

\begin{proof}
Apply Proposition \ref{prop:modpstablefinitesymmetricpowers}.
\end{proof}
In the next section, we compute an explicit expression for the space of primitives $\Pri^{G/H}(S^{\infty\rreg_G})$.

\section{The Primitives of an infinite representation sphere}

\label{sec:stableprimitives}
Let $G$ be a finite group, and let $X$ be a pointed $G$-space. We compute the primitives of the $G$-equivariant suspension spectrum of $X$, which is an equivalent task to describing the space $\Pri^{G/H}(S^{\infty\rreg_G}\sm X)$ (Proposition \ref{prop:stableprimitives}). To give the explicit description, we must give two definitions.
\begin{definition}\label{definition:diamond}
For any unpointed space $Y$, we use the notation $Y^{\Diamond}$ to denote the unreduced suspension of $Y$. The space $Y^{\Diamond}$ will be considered based, with the image of $Y\times \{0\} \hookrightarrow Y^{\Diamond}$ as the basepoint. In this way, even if $Y$ is not a based space, $Y^{\Diamond}$ is.
\end{definition}

\begin{definition}\label{definition:subgroupcomplex}
For any subgroup $H\subseteq G$, we write $\Po(G)_{\supset H}$ to be the poset of proper subgroups of $G$ which strictly contain $H$.
\end{definition}
Let $N_G(H):=\{g\in G:gHg^{-1}=H\}$ denote the normalizer subgroup of $H$, and let $W_G(H):=N_G(H)/H$ denote the Weyl group. Then Proposition \ref{prop:stableprimitives} states that
$$\Pri^{G/H}(S^{\infty\rreg_G}\sm X)\simeq \Sigma (\Po(G)_{\supset H}^{\Diamond})_{hW_G(H)}\sm X^G.$$
The main idea of the proof is that one may cover $\bigcup\limits_{K\supsetneq H}S^{\infty\rreg_G^K}$ by the contractible subspaces $S^{\infty\rreg_G^K}$ where $K$ varies over the poset $\Po(G)_{\supset H}\cup \{G\}$. With this covering one may prove, using Lemma \ref{lemma:coveringhomotopytype}, that $\bigcup\limits_{K\supsetneq H}S^{\infty\rreg_G^K}\simeq \Po(G)_{\supset H}^{\Diamond}$.

When we restrict our attention to $p$-groups $G$, a further simplification occurs. Any $p$-group $G$ has a maximal elementary abelian quotient $G \rightarrow G/F$, where $F$ denotes the kernel of the quotient map (Definition \ref{definition:Frattini}). If $H$ does contain the subgroup $F$, then we show the poset $\hat{\Po}(G)_{\supset H}$ is contractible (Lemma \ref{lemma:frattinilemma}). This allows us to simplify the expression above, and this simplification is stated in Proposition \ref{prop:stableprimitivespgroup}. Proposition \ref{prop:decompositionafterstableprimitives} combines all of the results of this section into a form we will use later.
\begin{definition}\label{definition:theposetC}
Let $G$ be a finite $p$-group. We let $\mathcal{C}$ be the set
$$\mathcal{C}:=\{H\unlhd G: G/H \text{ is an elementary abelian \emph{p}-group}\}.$$
The set $\mathcal{C}$ is closed under taking intersections, and its minimal element $F$ is the Frattini subgroup of $G$ (Definition \ref{definition:Frattini}). A subgroup $H$ is contained in $\mathcal{C}$ if and only if $F\subseteq H$.
\end{definition}
\begin{proposition}\label{prop:decompositionafterstableprimitives}
Let $G$ be a $p$-group. Then there is an equivalence of $\Z/p$-modules,
$$\Sp_{\Z/p}^{\infty}(S^{\infty\rreg_G})^G\simeq \prod\limits_{H\in\mathcal{C}}\Sp_{\Z/p}^{\infty}(\Sigma\Po(G)_{\supset H}^{\Diamond}\sm B(G/H)_+).$$
\end{proposition}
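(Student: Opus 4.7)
My plan is to assemble three ingredients already developed in this section: the Cartesian product decomposition of $p$-group fixed points of $\Sp_{\Z/p}^\infty$ (Proposition~\ref{prop:modpsymmetricpowersfixedpoints}), the identification of the primitives of $S^{\infty\rreg_G}$ via the subgroup complex (Proposition~\ref{prop:stableprimitives}), and the Frattini contractibility result (Lemma~\ref{lemma:frattinilemma}). The argument runs forward from here.

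First, I apply Proposition~\ref{prop:modpsymmetricpowersfixedpoints} with $X = S^{\infty\rreg_G}$ and a choice of subgroup ordering $H_0, \dots, H_m$ of $G$. This gives a Cartesian product
$$\Sp_{\Z/p}^\infty(S^{\infty\rreg_G})^G \;\cong\; \prod_{i=0}^m \Sp_{\Z/p}^\infty\bigl(\Pri^{G/H_i}(S^{\infty\rreg_G})\bigr)$$
indexed over the conjugacy classes $[H_i]$ of subgroups of $G$. I then feed each factor into Proposition~\ref{prop:stableprimitives} (with $X = S^0$, so $X^G = S^0$) to rewrite it as $\Sp_{\Z/p}^\infty\bigl(\Sigma(\Po(G)_{\supset H_i}^{\Diamond})_{hW_G(H_i)}\bigr)$.

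Next I split the product into two cases according to whether $H_i$ lies in $\mathcal{C}$. For $H \notin \mathcal{C}$, I appeal to Lemma~\ref{lemma:frattinilemma}, which produces a distinguished (co)final element in $\Po(G)_{\supset H}$ and hence shows this poset is contractible; the unreduced suspension, homotopy orbit construction, and $\Sp_{\Z/p}^\infty$ all preserve based contractibility, so these factors are trivial and can be dropped from the Cartesian product. For $H \in \mathcal{C}$, the subgroup $H$ is normal in $G$, its conjugacy class is $\{H\}$, and $W_G(H) = G/H$ is elementary abelian. Under the correspondence $K \mapsto K/H$, the poset $\Po(G)_{\supset H}$ matches the poset of proper nontrivial subgroups of $G/H$, and conjugation by $G/H$ acts trivially on this poset because $G/H$ is abelian. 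For a trivial action, homotopy orbits of a based space $Y$ compute as $Y \sm B(G/H)_+$, so
$$\Sigma (\Po(G)_{\supset H}^{\Diamond})_{hW_G(H)} \;\simeq\; \Sigma \Po(G)_{\supset H}^{\Diamond} \sm B(G/H)_+.$$
Assembling the surviving factors produces exactly the asserted product over $\mathcal{C}$.

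The main obstacle will be the case $H \notin \mathcal{C}$: I need to be confident that Lemma~\ref{lemma:frattinilemma} gives genuine contractibility of $\Po(G)_{\supset H}$ rather than of some enlarged poset, and in particular that it applies in both of the two failure modes --- $H$ not normal, or $G/H$ not elementary abelian. A direct Frattini-style argument suggests that when $F \not\subseteq H$ the subgroup $HF$ is proper and serves as a terminal element in $\Po(G)_{\supset H}$, contracting the poset; a parallel normalizer argument should handle the non-normal case. Once contractibility is in hand, the rest of the argument is formal bookkeeping against the two propositions cited above.
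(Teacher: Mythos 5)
Your proposal is correct and is essentially the paper's own argument: the paper merely packages your two-case analysis into Proposition \ref{prop:stableprimitivespgroup} (whose proof is exactly your combination of Proposition \ref{prop:stableprimitives}, Lemma \ref{lemma:frattinilemma}, and triviality of the Weyl action when $F\subseteq H$) and then cites it alongside Proposition \ref{prop:modpsymmetricpowersfixedpoints}. Your worry about a second failure mode is moot: as noted in Definition \ref{definition:theposetC}, $H\in\mathcal{C}$ if and only if $F\subseteq H$ (for a $p$-group, $F\subseteq H$ already forces $H$ normal with elementary abelian quotient), so Lemma \ref{lemma:frattinilemma} covers every $H\notin\mathcal{C}$ and no separate normalizer argument is needed.
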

\begin{proof}
Proposition \ref{prop:modpsymmetricpowersfixedpoints} implies that there is a homeomorphism
$$\Sp_{\Z/p}^{\infty}(S^{\infty\rreg_G})^G\cong \prod\limits_{i=0}^{m}\Sp_{\Z/p}^{\infty}(\Pri^{G/H_i}(S^{\infty\rreg_G})),$$
where $H_0, \ldots, H_m$ are representatives for the conjugacy classes of subgroups of $G$. Proposition \ref{prop:stableprimitivespgroup} implies that
$$\Pri^{G/H}(S^{\infty\rreg_G})\simeq \begin{cases} \Sigma\Po(G)_{\supset H}^{\Diamond}\sm B(G/H)_+ \hskip 0.2in H\in\mathcal{C}\\
\star \hskip 1.7in H\notin\mathcal{C}\end{cases}.$$
The result immediately follows.
\end{proof}
\subsection{Coverings and Posets}

We familiarize the reader with a computational tool which will be crucial to us. Let $\Po$ be a finite poset. Suppose that $Y$ is a pointed topological space. A functor $\mathbf{Y}:\Po \rightarrow \Top_*$ is called a \emph{covering} of $Y$ if $\hocolim_{\Po}\mathbf{Y}\simeq Y$. If $\mathbf{Y}$ and $\mathbf{Y}'$ are two functors such that there is a natural homotopy equivalence between them, then $\hocolim_{\Po}\mathbf{Y}\simeq \hocolim_{\Po}\mathbf{Y}'$. If we replace $\mathbf{Y}$ by a diagram whose maps are all cofibrations, then the homotopy colimit is the union $\bigcup\limits_{a\in \Po}\mathbf{Y}(a)$.

Now suppose that for any two elements $a, b\in \Po$, there is a unique maximal element $a\sm b$ such that $a\ge a\sm b$ and $b\ge a\sm b$. Such posets are called \emph{meet semilattices.} If $\Po$ is a finite meet semilattice, then $\Po$ has a unique minimal element, which we denote by 0. Let $\Po_{>0}=\Po-\{0\}$.

\begin{lemma}\label{lemma:coveringhomotopytype}
Consider a functor $\mathbf{Y}:\Po \rightarrow \Top_*$ with the property that $\mathbf{Y}(a)\simeq \star$ whenever $a\neq 0$. Then $\hocolim_{\Po}\mathbf{Y}\simeq \mathbf{Y}(0)\sm \Po_{>0}^{\Diamond}$.
\end{lemma}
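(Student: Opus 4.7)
The plan is to replace $\mathbf{Y}$ by a particularly simple weakly equivalent diagram, identify the resulting homotopy colimit as a collapse of $A \times |\Po|$, and recognize the answer as an unreduced suspension. Writing $A := \mathbf{Y}(0)$, homotopy invariance of $\hocolim$ lets us assume strictly that $\mathbf{Y}(a) = \star$ for every $a \neq 0$; every transition map out of $A$ is then forced to be the constant map to the basepoint.

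The key geometric input is that $0$ is the unique minimum of $\Po$, so every chain in $\Po$ either lies entirely in $\Po_{>0}$ or is obtained by prepending $0$ to a chain in $\Po_{>0}$. Consequently the nerve $|\Po|$ is the unreduced cone $C|\Po_{>0}|$ with apex $0$. Running the two-sided bar-construction model for the unpointed homotopy colimit on the simplified $\mathbf{Y}$, chains starting at $0$ contribute copies of $A \times \Delta^n$, chains in $\Po_{>0}$ contribute only $\Delta^n$, and the face map $d_0$ (which drops the initial $0$) applies the transition $A \to \star$. A direct assembly of these pieces yields
$$\hocolim^u \mathbf{Y} \;\cong\; (A \times |\Po|) \,\big/\, \bigl((a,x) \sim (a',x) \text{ for all } a,a' \in A,\ x \in |\Po_{>0}|\bigr),$$
i.e.\ $A \times |\Po|$ with the $A$-factor over $|\Po_{>0}|$ collapsed along the projection onto $|\Po_{>0}|$.

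Passing to the pointed homotopy colimit requires one further collapse of the basepoint section $\{*_A\} \times |\Po|$, so altogether
$$\hocolim_\Po \mathbf{Y} \;\simeq\; (A \times |\Po|) \,\big/\, \bigl(A \times |\Po_{>0}| \,\cup\, \{*_A\} \times |\Po|\bigr) \;\cong\; A \wedge \bigl(|\Po|/|\Po_{>0}|\bigr).$$
Since $|\Po| = C|\Po_{>0}|$, quotienting by the base of the cone yields the unreduced suspension $\Po_{>0}^\Diamond$ based at the image of the apex $0$, matching Definition~\ref{definition:diamond}, so $\hocolim_\Po \mathbf{Y} \simeq \mathbf{Y}(0) \wedge \Po_{>0}^\Diamond$ as claimed. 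The one step that merits care is the identification of $\hocolim^u \mathbf{Y}$ above: one must verify by inspection of the face-map structure that the bar construction on this very simple diagram assembles in the claimed way, using the cone realization of $|\Po|$. Notably, the full meet-semilattice hypothesis plays no role here beyond providing the unique minimum $0$.
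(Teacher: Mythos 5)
Your proof is correct, but it takes a different route from the paper's. The paper never unwinds the bar construction: it introduces a second diagram $\mathbf{Y}'$ over the same poset, defined by $\mathbf{Y}'(a)=\mathbf{Y}(0)\sm(\Po_{\backslash a}-\{0\})^{\Diamond}$, which is a covering of the candidate answer $\mathbf{Y}(0)\sm\Po_{>0}^{\Diamond}$ (so its homotopy colimit is the whole space), and observes that $\mathbf{Y}'$ is objectwise equivalent to $\mathbf{Y}$ ($\Po_{\backslash a}-\{0\}$ has the maximal element $a$, hence is contractible, for $a\neq 0$, and equals $\emptyset$, with $\emptyset^{\Diamond}=S^0$, for $a=0$); homotopy invariance then finishes the argument. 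You instead strictify $\mathbf{Y}$ to the skyscraper diagram (value $\mathbf{Y}(0)$ at $0$, a point elsewhere) and compute its pointed homotopy colimit directly from the bar construction, using that the nerve of $\Po$ is the cone on the nerve of $\Po_{>0}$ with apex $0$; your identification of the unpointed colimit as $A\times|\Po|$ with the $A$-factor collapsed over $|\Po_{>0}|$, and the subsequent passage to $A\sm(|\Po|/|\Po_{>0}|)\cong A\sm\Po_{>0}^{\Diamond}$, are both correct (the only quibble is that the natural basepoint here is the image of the collapsed base $|\Po_{>0}|$ rather than the apex, but the flip of the suspension coordinate makes this immaterial). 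Your approach is more explicit and in fact supplies the comparison zigzag through the skyscraper diagram that the paper's assertion ``$\mathbf{Y}'\simeq\mathbf{Y}$'' implicitly needs, since there is no direct natural map between $\mathbf{Y}$ and $\mathbf{Y}'$; the paper's approach avoids any simplicial bookkeeping and sets up the subsequent functoriality lemma (naturality in a map of posets) more cleanly, since the same covering construction is applied to source and target there. Your closing remark is also accurate: only the existence of the minimum $0$ is used in this lemma; the meet-semilattice structure enters only in the functoriality statement that follows it.
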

\begin{proof}
For any $a\in \Po$, let $\Po_{\backslash a}=\{x\in \Po:x\le a\}$. The pointed space $\mathbf{Y}(0)\sm \hat{\Po}^{\Diamond}$ has a covering $\mathbf{Y}'$ defined by
$$\mathbf{Y}'(a) = \mathbf{Y}(0)\sm (\Po_{\backslash a}-\{0\})$$
If $a\neq 0$, then the poset $\Po_{\backslash a}-\{0\}$ has a maximal element (namely, $a$), so it is contractible and thus $\mathbf{Y}'(a)\simeq \star$. If $a=0$, it's obvious that $\Po_{\backslash 0}-\{0\}$ is empty, and therefore its unreduced suspension is $S^0$. Thus, $\mathbf{Y}'\simeq \mathbf{Y}$, and so
$$\hocolim_{\Po}\mathbf{Y}\simeq \hocolim_{\Po}\mathbf{Y}'\simeq \mathbf{Y}(0)\sm (\Po_{>0})^{\Diamond}$$

\end{proof}
Now suppose that $\Po$ and $\mathcal{Q}$ are two posets and $f:\Po \rightarrow \mathcal{Q}$ is an order-preserving map. Let us suppose that $\mathbf{X}:\Po \rightarrow \Top_*$ and $\mathbf{Y}:\mathcal{Q} \rightarrow \Top_*$ are two functors such that
\begin{enumerate}
\item For every $a\in \Po$, there is a map $g_a:\mathbf{X}(a) \rightarrow \mathbf{Y}(f(a))$.
\item If $a\le b$, then the following diagram commutes
$$\xymatrix{\mathbf{X}(a)\ar[d] \ar[r]^{g_a} & \mathbf{Y}(f(a))\ar[d]\\
\mathbf{X}(b) \ar[r]^{g_b} & \mathbf{Y}(f(b))}$$\end{enumerate}
Then there is an induced map $\hocolim_{\Po}\mathbf{X} \rightarrow \hocolim_{\mathcal{Q}}\mathbf{Y}$. The lemma below states that the homotopy equivalence of Lemma \ref{lemma:coveringhomotopytype} is functorial in a strong sense.
\begin{lemma}
Suppose further that $\Po$ and $\mathcal{Q}$ are finite meet semilattices and for every $a, b\in \Po$, $f(a\sm b)=f(a)\sm f(b)$. If $\mathbf{X}(a)\simeq \star$ whenever $a\neq 0$ and $\mathbf{Y}(c)\simeq \star$ whenever $c\neq 0$, then the map $\hocolim_{\Po}\mathbf{X} \rightarrow \hocolim_{\mathcal{Q}}\mathbf{Y}$ is the map
$$g_0\sm f:\mathbf{X}(0)\sm (\Po_{>0})^{\Diamond} \rightarrow \mathbf{Y}(0)\sm (\mathcal{Q}_{>0})^{\Diamond}$$
\end{lemma}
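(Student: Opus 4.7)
My plan is to upgrade the argument of Lemma~\ref{lemma:coveringhomotopytype} to a naturality statement. That lemma produced the equivalence $\hocolim_\Po \mathbf{X}\simeq \mathbf{X}(0)\sm (\Po_{>0})^\Diamond$ by exhibiting an auxiliary diagram $\mathbf{X}':\Po\to\Top_*$, defined by $\mathbf{X}'(a):=\mathbf{X}(0)\sm (\Po_{\le a}\setminus\{0\})^\Diamond$, which is pointwise equivalent to $\mathbf{X}$ and whose homotopy colimit is on the nose the claimed smash product. I will construct an analogous replacement $\mathbf{Y}'$ on $\mathcal{Q}$, build a natural transformation $\eta:\mathbf{X}'\Rightarrow \mathbf{Y}'\circ f$, and verify it realizes the asserted map after passing to $\hocolim$.

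First, I define each component $\eta_a:\mathbf{X}'(a)\to \mathbf{Y}'(f(a))$ as the smash product of $g_0:\mathbf{X}(0)\to\mathbf{Y}(0)$ with the pointed map $(\Po_{\le a}\setminus\{0\})^\Diamond\to(\mathcal{Q}_{\le f(a)}\setminus\{0\})^\Diamond$ induced by $f$ (elements $x$ with $f(x)=0_\mathcal{Q}$ are sent to the basepoint of the target diamond). For this assignment to be well defined on based spaces and functorial in $a$, I need $f(0_\Po)=0_\mathcal{Q}$ and $f(\Po_{\le a})\subseteq \mathcal{Q}_{\le f(a)}$. Order-preservation gives the second condition, while the meet-preservation hypothesis supplies the first via $0_\Po=\bigwedge_{a\in\Po}a$ in the finite setting.

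Next I check that $\eta$ is compatible with the levelwise equivalences $\mathbf{X}'\simeq\mathbf{X}$, $\mathbf{Y}'\simeq\mathbf{Y}$ and the given natural transformation $g:\mathbf{X}\Rightarrow\mathbf{Y}\circ f$. This is where the meet-preservation hypothesis is essential: the pointwise equivalences of Lemma~\ref{lemma:coveringhomotopytype} exploit contractibility of $\Po_{\le a}\setminus\{0\}$ via its maximum $a$, and for these equivalences to be compatible across $f$, the assignment $a\mapsto f(a)$ must behave well with respect to intersections of lower sets --- meet-preservation is exactly the combinatorial condition that secures this. Having verified this, we pass to homotopy colimits of $\eta$ and apply the identifications of the previous lemma on both sides: the induced map is manifestly $g_0\sm f$, and the compatibility just checked shows that this transported map agrees with the honestly induced $\hocolim_\Po\mathbf{X}\to\hocolim_\mathcal{Q}\mathbf{Y}$.

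The main obstacle will be the compatibility verification in the previous paragraph: $\mathbf{X}'$ is only pointwise equivalent to (not equal to) $\mathbf{X}$, so one must check that $\eta$ actually refines $g$ under those equivalences and not merely that $\eta$ exists as an abstract natural transformation. Without the meet-preservation hypothesis, $\eta$ would still make sense diagramatically, but its homotopy colimit might fail to match the map induced by $g$. I expect the bookkeeping to be straightforward once the setup is in place, but this is the only nontrivial check in the proof.
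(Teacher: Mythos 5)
Your proposal is correct and takes essentially the paper's own (very terse) route: the paper likewise replaces $\mathbf{X}$ and $\mathbf{Y}$ by the pointwise-equivalent functors $\mathbf{X}'(a)=\mathbf{X}(0)\sm(\Po_{\backslash a}-\{0\})^{\Diamond}$ and $\mathbf{Y}'(c)=\mathbf{Y}(0)\sm(\mathcal{Q}_{\backslash c}-\{0\})^{\Diamond}$ and reads off the induced map on homotopy colimits, so your write-up just supplies the naturality bookkeeping the paper leaves implicit. One small caveat: meet-preservation only yields $f(0_{\Po})=\bigwedge_{a\in\Po}f(a)$, the minimum of the image of $f$, not $f(0_{\Po})=0_{\mathcal{Q}}$ in general; that identity is really an implicit hypothesis of the statement (it is forced by the assertion that $g_0$ lands in $\mathbf{Y}(0)$), so your argument stands once it is assumed rather than derived.
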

\begin{proof}
As in the proof of the previous lemma, replace $\mathbf{X}$ with the homotopy equivalent functor $\mathbf{X}'$ defined by $\mathbf{X}'(a)=\mathbf{X}(0)\sm (\Po_{\backslash a}-\{0\})^{\Diamond}$, and replace $\mathbf{Y}$ with the homotopy equivalent functor $\mathbf{Y}'$ defined by $\mathbf{Y}'(c)=\mathbf{Y}(0)\sm (\mathcal{Q}_{\backslash c}-\{0\})^{\Diamond}$.
\end{proof}

\subsection{The Primitives of an infinite representation sphere}

\begin{proposition}\label{prop:stableprimitives}
Let $G$ be a finite group, and let $X$ be a pointed $G$-space. Then for any subgroup $H\subset G$,
$$\Pri^{G/H}(S^{\infty\rreg_G}\sm X)\simeq \Sigma (\Po(G)_{\supset H}^{\Diamond})_{hW}\sm X^G$$
where $W=W_G(H)$ is the Weyl group of $H$.
\end{proposition}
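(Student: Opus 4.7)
The plan is to unfold the definition of $\Pri^{G/H}$ and reduce to applying the covering lemma (Lemma \ref{lemma:coveringhomotopytype}) $W$-equivariantly. By definition, $\Pri^{G/H}(S^{\infty\rreg_G}\sm X) = (B/A)/W$, where $B = (S^{\infty\rreg_G}\sm X)^H = S^{\infty\rreg_G^H}\sm X^H$, $A = \bigcup_{K\supsetneq H}S^{\infty\rreg_G^K}\sm X^K$ sits inside $B$ as a $W$-invariant subspace, and $W = W_G(H)$.

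First I would identify $A$ via the covering lemma. Take the poset $\mathcal{K} = \{K : H\subsetneq K\subseteq G\}$ under reverse inclusion, so that $G$ is its minimum element and meets are given by subgroups generated by pairs; note that $W$ acts on $\mathcal{K}$ by conjugation. Define the functor $\mathbf{Z}\colon \mathcal{K}\to\Top_*$ by $\mathbf{Z}(K) = S^{\infty\rreg_G^K}\sm X^K$. Its colimit is $A$, and for $K\subsetneq G$ the infinite representation sphere $S^{\infty\rreg_G^K}$ is contractible so $\mathbf{Z}(K)\simeq *$, while $\mathbf{Z}(G) = X^G$. Lemma \ref{lemma:coveringhomotopytype} then yields
$$A\simeq \mathbf{Z}(G)\sm(\mathcal{K}\setminus\{G\})^{\Diamond} = X^G\sm \Po(G)_{\supset H}^{\Diamond}.$$

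Next I would take the cofiber. Non-equivariantly, $B$ is contractible (as $S^{\infty\rreg_G^H}$ is), so $B/A\simeq \Sigma A \simeq X^G\sm \Sigma\Po(G)_{\supset H}^{\Diamond}$. To compute $(B/A)/W$, I would upgrade the covering lemma argument above to be $W$-equivariant by replacing each contractible $\mathbf{Z}(K)$ for $K\subsetneq G$ by a $W$-free contractible model, concretely by smashing in $EW_+$, so that $A$ is identified equivariantly with the diagonal-action space $X^G\sm \Po(G)_{\supset H}^{\Diamond}\sm EW_+$ at the level where strict $W$-orbits agree with Borel orbits. Taking strict $W$-orbits of the resulting suspended cofiber then produces the homotopy orbits on the poset factor, yielding
$$(B/A)/W \simeq \Sigma(\Po(G)_{\supset H}^{\Diamond})_{hW}\sm X^G,$$
which is the desired formula (using that $W$ acts trivially on $X^G$).

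The main obstacle is ensuring the covering lemma's equivalence is $W$-equivariant in a manner that converts strict orbits to homotopy orbits at the end. The essential observation is that the contractibility of each $S^{\infty\rreg_G^K}$ with $K\subsetneq G$ can be realized through a free $W$-equivariant null-homotopy after passing to a universal $W$-space, and this is what implicitly inserts the $EW_+$ factor into the equivariant identification of $A$. Once this is set up, the Borel-style homotopy orbit appears naturally from the strict $W$-quotient of the suspended cofiber.
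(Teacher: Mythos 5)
Your plan mirrors the paper's in outline: identify $A := \bigcup_{K\supsetneq H}S^{\infty\rreg_G^K}\sm X^K$ via the covering lemma, argue that the middle term vanishes, and reduce to $W$-homotopy orbits of the poset. However, the one observation that actually legitimizes the passage from strict to Borel orbits is missing, and the justification you offer in its place does not do the job.

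Write $B = S^{\infty\rreg_G^H}\sm X^H$, so that $\Pri^{G/H}(S^{\infty\rreg_G}\sm X) = (B/A)/W$. You propose to ``identify $A$ equivariantly with $X^G\sm\Po(G)_{\supset H}^{\Diamond}\sm EW_+$ at the level where strict $W$-orbits agree with Borel orbits.'' But $A$ is not a free $W$-space (for instance $S^{\infty\rreg_G^G}\sm X^G = S^0\sm X^G\subseteq A$ is fixed pointwise by $W$), so $A$ is not $W$-equivalent to $A\sm EW_+$ and $A/W\not\simeq A_{hW}$ in general. The fact you need, and never state, is that $W$ acts \emph{freely on the layer $B/A$} away from the basepoint: by construction every point of $S^{\infty\rreg_G}(H)\sm X(H)$ has isotropy exactly $H$, so the residual $W=N_G(H)/H$-action is free. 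This is what gives $(B/A)/W\simeq (B/A)_{hW}$. Only after that replacement does one invoke the cofiber sequence $A_{hW}\to B_{hW}\to (B/A)_{hW}$ and the contractibility of $B\sm EW_+$ (which indeed uses $S^{\infty\rreg_G^H}\simeq\star$ for $H\subsetneq G$) to get $(B/A)_{hW}\simeq\Sigma A_{hW}$, followed by the ($W$-equivariant) covering lemma computing $A_{hW}$.

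In short, you have conflated two separate inputs: contractibility of $S^{\infty\rreg_G^K}$ kills the middle term $B_{hW}$, but it does not license replacing the strict quotient by the homotopy quotient. That replacement requires freeness of the $W$-action on $B/A$, which your argument neither invokes nor establishes, so the step from $\Sigma A_{hW}$ back to $(B/A)/W$ is a genuine gap. (You also do not address the degenerate case $H = G$, which the paper handles separately by convention.)
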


\begin{proof}
If $H=G$, then $\Pri^{G/G}(S^{\infty\rreg_G}\sm X)=S^0\sm X^G$, and by convention $\Sigma(\Po(G)_{\supset G}^{\Diamond})\sm X^G\simeq \Sigma(S^{-1})\sm X^G$. This deals with the case $H=G$, so let us henceforth assume $H\neq G$.

By definition, $\Pri^{G/H}(S^{\infty\rreg_G}\sm X)$ sits in a cofiber sequence
$$\xymatrix{W\backslash(\bigcup\limits_{K\supsetneq H}S^{\infty\rreg_G^K}\sm X^K)\ar[r]& W\backslash(S^{\infty\rreg_G^H}\sm X^H)\ar[r] &\Pri^{G/H}(S^{\infty\rreg_G}\sm X)}$$
Every point in the the quotient space $(S^{\infty\rreg_G^H}\sm X^H)/(\bigcup\limits_{K\supsetneq H}S^{\infty\rreg_G^K}\sm X^K)$ has isotropy group equal to $H$, and therefore $W$ acts freely on this quotient space. So we may replace the strict quotient by the homotopy quotient.
$$\xymatrix{(\bigcup\limits_{K\supsetneq H}S^{\infty\rreg_G^K}\sm X^K)_{hW}\ar[r]& (S^{\infty\rreg_G^H}\sm X^H)_{hW}\ar[r] &\Pri^{G/H}(S^{\infty\rreg_G}\sm X)}$$

Since $H\subsetneq G$, the $W$-space $S^{\infty\rreg_G^H}\sm X^H\sm EW_+$ has underlying points $S^{\infty}\sm X^H\sm EW_+\simeq \star$, and its fixed points under any nonzero subgroup of $W$ are also contractible because $W$ acts freely on $EW_+$. Therefore, $S^{\infty\rreg_G^H}\sm X^H\sm EW_+$ is contractible as a $W$-space, and so $(S^{\infty\rreg_G^H}\sm X^H)_{hW}\simeq \star$. It follows that $\Pri^{G/H}(S^{\infty\rreg_G}\sm X)\simeq \Sigma (\bigcup\limits_{K\supsetneq H}S^{\infty\rreg_G^K}\sm X^K)_{hW}$.
Let $\hat{\Po}(G)_{\supset H}$ denote the poset of all subgroups strictly containing $H$. The space $\bigcup\limits_{K\supsetneq H}S^{\infty\rreg_G^K}\sm X^K$ has a covering $\mathbf{Y}(-)$ indexed over $\hat{\Po}(G)_{\supset H}$. It is defined by
$$\mathbf{Y}(K)=S^{\infty\rreg_G^K}\sm X^K\simeq \begin{cases}
\star \hskip 0.4in K\neq G\\
X^G \hskip 0.2in K=G
\end{cases}$$
Therefore, Lemma \ref{lemma:coveringhomotopytype} implies that
$$\bigcup\limits_{K\supsetneq H}S^{\infty\rreg_G^K}\sm X^K=\hocolim_{\hat{\Po}_{\supset H}}\mathbf{Y}\simeq \mathbf{Y}(G)\sm \Po_{\supset H}^{\Diamond}\simeq X^G\sm \Po_{\supset H}^{\Diamond}$$
This completes the proof.

\end{proof}

\subsection{Subgroups complexes and the Frattini subgroup}

\begin{definition}\label{definition:Frattini}
Let $F\subset G$ denote the intersection of all of the maximal proper subgroups of $G$.  It is commonly referred to as the \emph{Frattini subgroup}. When $G$ is a $p$-group, $F$ is the minimal subgroup of $G$ such that $G/F$ is an elementary abelian $p$-group.
\end{definition}
\begin{lemma}\label{lemma:frattinilemma}
If $H$ does not contain the Frattini subgroup of $G$, then $\Po(G)_{\supset H}$ is contractible.
\end{lemma}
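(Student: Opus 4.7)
The plan is to exploit the \emph{non-generator property} of the Frattini subgroup: for any subgroup $K \subseteq G$, if $KF = G$ then $K = G$. Equivalently, if $K$ is proper then $KF$ is proper, since any proper subgroup $K$ is contained in some maximal subgroup $M$, which contains $F$ by definition of $F$, so $KF \subseteq M \subsetneq G$. Note also that $F$ is a characteristic subgroup of $G$ (being the intersection of maximal subgroups, which are permuted by automorphisms), hence normal, so $KF$ is always a subgroup.

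With this in hand, I would define the map
\[
\phi : \Po(G)_{\supset H} \longrightarrow \Po(G)_{\supset H}, \qquad \phi(K) = KF.
\]
To see $\phi$ is well-defined, note that $K \supsetneq H$ implies $KF \supseteq K \supsetneq H$, while $K \subsetneq G$ implies $KF \subsetneq G$ by the non-generator property. Clearly $\phi$ is order-preserving, satisfies $K \subseteq \phi(K)$, and is idempotent (since $(KF)F = KF$). Thus $\phi$ is a closure operator, and standard poset theory gives a deformation retraction of nerves from $\Po(G)_{\supset H}$ onto the image
\[
\mathrm{Im}(\phi) = \{\, K : H \subsetneq K \subsetneq G,\ F \subseteq K \,\}.
\]
Concretely, the natural transformations $\mathrm{id} \leq \phi \geq \phi|_{\mathrm{Im}(\phi)}$ give homotopies between the inclusion/retraction and the identity maps on the respective nerves.

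To finish, I would use the hypothesis $F \not\subseteq H$. This hypothesis implies $HF \supsetneq H$, and $HF$ is proper by the non-generator property again (assuming $H$ is proper, which we may, else the poset is empty and trivially contractible). Therefore $HF \in \mathrm{Im}(\phi)$. Moreover, any $K \in \mathrm{Im}(\phi)$ satisfies $K \supseteq H$ and $K \supseteq F$, hence $K \supseteq HF$, so $HF$ is the minimum element of $\mathrm{Im}(\phi)$. A poset with a minimum element has contractible nerve (the minimum is a cone point), and combining with the homotopy equivalence $\Po(G)_{\supset H} \simeq \mathrm{Im}(\phi)$ yields the result.

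The main obstacle is really just the algebraic non-generator property of $F$, which is classical. Everything else is formal manipulation of posets via closure operators. One minor care-point: one must verify that $HF$ is strictly larger than $H$ (using $F \not\subseteq H$) rather than merely larger than or equal; this is immediate since any element of $F \setminus H$ lies in $HF \setminus H$.
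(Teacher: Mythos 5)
Your proposal is correct and is essentially the paper's own argument: both use the map $K \mapsto KF$ on $\Po(G)_{\supset H}$, well-defined because any proper $K$ lies in a maximal subgroup containing $F$, and both conclude contractibility from $K \subseteq KF \supseteq HF$ with $HF \supsetneq H$ forced by $F \not\subseteq H$. The only difference is packaging — you route through the closure-operator retraction onto the image with minimum element $HF$, while the paper directly observes the map is simultaneously homotopic to the identity and to the constant map at $HF$.
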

\begin{proof}
For any subgroup $K$ of $G$, let $KF$ denote the minimal subgroup of $G$ containing both $K$ and $F$. Consider the poset map
$$f:\Po(G)_{\supset H}\rightarrow \Po(G)_{\supset H}$$
$$K \mapsto KF$$
Any proper subgroup $K$ is contained in some maximal subgroup $M$ of $G$, and therefore the group $KF$ is also contained in $M=MF$. Therefore, $KF$ is also a proper subgroup of $G$ and so $f$ is a well defined map.

Since $KF\supset K$ for every $K$, the map $f$ is homotopic to the identity. Because $KF \supset HF$ for every $K$, the map $f$ is homotopic to the constant map at $HF$. It follows that $\Po(G)_{\supset H}$ is contractible.
\end{proof}

\begin{proposition}\label{prop:stableprimitivespgroup}
Suppose $G$ is a finite group and $X$ is any pointed $G$-space. If $H$ does not contain the Frattini subgroup of $G$, then $\Pri^{G/H}(S^{\infty\rreg_G}\sm X)\simeq \star$. If $H$ contains the Frattini subgroup of $G$, then
$$\Pri^{G/H}(S^{\infty\rreg_G}\sm X)\simeq \Sigma \Po(G)_{\supset H}^{\Diamond}\sm B(G/H)_+\sm X^G$$
\end{proposition}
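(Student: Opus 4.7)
The plan is to combine Proposition \ref{prop:stableprimitives}, which reduces the computation to identifying the $W$-equivariant homotopy type of the poset $\Po(G)_{\supset H}$, with Lemma \ref{lemma:frattinilemma} and a direct analysis of the action of the Weyl group $W=W_G(H)$. Proposition \ref{prop:stableprimitives} already gives
$$\Pri^{G/H}(S^{\infty\rreg_G}\sm X)\simeq \Sigma (\Po(G)_{\supset H}^{\Diamond})_{hW}\sm X^G,$$
so the two cases differ only in what the pointed homotopy orbits $(\Po(G)_{\supset H}^{\Diamond})_{hW}$ turn out to be.

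First case: if $H$ does not contain the Frattini subgroup $F$, Lemma \ref{lemma:frattinilemma} already produces a contraction of the poset $\Po(G)_{\supset H}$. Its unreduced suspension is then contractible as a pointed space, and the pointed homotopy orbits of a pointed-contractible $W$-space are themselves contractible. This yields $\Pri^{G/H}(S^{\infty\rreg_G}\sm X)\simeq \star$.

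Second case: when $H\supseteq F$ (and, as is the setting of interest in this section, $G$ is a $p$-group), the key observation is that the $W$-action on $\Po(G)_{\supset H}$ is trivial. Indeed, $G/F$ is elementary abelian, so every subgroup of $G$ containing $F$ — in particular $H$ itself and every $K\in \Po(G)_{\supset H}$ — is normal in $G$. Hence $N_G(H)=G$, so $W=G/H$, and conjugation by any $g\in G$ fixes each such $K$ setwise. With trivial $W$-action on the pointed space $\Po(G)_{\supset H}^{\Diamond}$, the pointed homotopy orbits simplify as
$$(\Po(G)_{\supset H}^{\Diamond})_{hW}\simeq EW_+\wedge_W \Po(G)_{\supset H}^{\Diamond}\simeq BW_+\wedge \Po(G)_{\supset H}^{\Diamond}.$$
Substituting back into the formula from Proposition \ref{prop:stableprimitives} and using $W=G/H$ gives the desired expression $\Sigma \Po(G)_{\supset H}^{\Diamond}\sm B(G/H)_+\sm X^G$.

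The only real obstacle lies in the normality argument driving the second case: it is exactly the combination of the Frattini condition $H\supseteq F$ and the $p$-group hypothesis that forces every overgroup of $H$ to be normal in $G$ and hence forces the Weyl action to be trivial. Once this normality observation is in hand, both cases reduce to short manipulations of the homotopy-orbit expression supplied by Proposition \ref{prop:stableprimitives}.
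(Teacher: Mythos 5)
Your proposal is correct and follows essentially the same route as the paper's proof: apply Proposition \ref{prop:stableprimitives}, use Lemma \ref{lemma:frattinilemma} (nonequivariant contractibility of the poset kills the pointed homotopy orbits) when $H\not\supseteq F$, and when $H\supseteq F$ observe that all overgroups of $H$ are normal so $W$ acts trivially and $(\Po(G)_{\supset H}^{\Diamond})_{hW}\simeq \Po(G)_{\supset H}^{\Diamond}\sm BW_+$. Your explicit note that the normality argument needs $G$ to be a $p$-group (despite the proposition's "finite group" phrasing) is a fair and accurate reading of how the statement is actually used.
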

\begin{proof}
By Proposition \ref{prop:stableprimitives},
$$\Pri^{G/H}(S^{\infty\rreg_G}\sm X)\simeq \Sigma (\Po(G)_{\supset H}^{\Diamond})_{hW}\sm X^G$$
If $H$ does not contain the Frattini subgroup, then $\Po(G)_{\supset H}^{\Diamond}$ has contractible underlying points and thus $(\Po(G)_{\supset H}^{\Diamond})_{hW}$ is contractible. If $H$ does contain the Frattini subgroup, then every subgroup containing $H$ is normal, and thus $W$ acts trivially on $\Po(G)_{\supset H}^{\Diamond}$. Thus $(\Po(G)_{\supset H}^{\Diamond})_{hW}\simeq \Po(G)_{\supset H}^{\Diamond}\sm BW_+$.
\end{proof}

\section{Products on Primitives}
\label{sec:productsonprimitives}
Let $X$ and $Y$ be pointed $G$-spaces. Consider the composition below, where the second map is inclusion of fixed points for the diagonal inclusion $G \subset G\times G$:
$$\Sp_{\Z/p}^{\infty}(X)^G \sm \Sp_{\Z/p}^{\infty}(Y)^G \rightarrow \Sp_{\Z/p}^{\infty}(X\sm Y)^{G\times G} \hookrightarrow \Sp_{\Z/p}^{\infty}(X\sm Y)^G.$$
Specialize to the case $X=Y=S^{\infty\rreg_G}$. There is a homeomorphism of $G$-spaces
$$X\sm Y = S^{\infty\rreg_G}\sm S^{\infty\rreg_G} \cong S^{\infty\rreg_G}$$
given by interleaving the copies of the reduced regular representation $\rreg_G$. Thus, we obtain a product structure on the $\Z/p$-module $\Sp_{\Z/p}^{\infty}(S^{\infty\rreg_G})^G$: it is a $\Z/p$-algebra whose 0 element is the basepoint.

Let us assume that $G$ is a $p$-group. The goal of this section is to analyze the product on $\Sp_{\Z/p}^{\infty}(S^{\infty\rreg_G})^G$ in terms of its free $\Z/p$-module basis which arises from Propositions \ref{prop:modpsymmetricpowersfixedpoints} and \ref{prop:decompositionafterstableprimitives}:
\begin{equation}\label{eqn:pgpfixptssymmpwrrepsph}
\Sp_{\Z/p}^{\infty}(S^{\infty\rreg_G})^G\simeq \prod\limits_{H\in\mathcal{C}}\Sp_{\Z/p}^{\infty}(\Pri^{G/H}(S^{\infty\rreg_G}))\simeq \prod\limits_{H\in\mathcal{C}}\Sp_{\Z/p}^{\infty}(\Sigma\Po(G)_{\supset H}^{\Diamond}\sm B(G/H)_+).
\end{equation}
Let $H, K\in\mathcal{C}$ (Definition \ref{definition:theposetC}). Then the $H$-factor smashed with the $K$-factor maps to the $(H\cap K)$-factor in the decomposition of Equation \ref{eqn:pgpfixptssymmpwrrepsph}, in the following way. Let $p^m$ denote the ratio $\frac{[G:H][G:K]}{[G:(H\cap K)]}$, where $m\ge 0$.  The product on the basis elements $\Pri^{G/H}(S^{\infty\rreg_G})$ is the composition
\begin{equation}\label{eqn:productonprimitives}
\Pri^{G/H}(S^{\infty\rreg_G}) \sm \Pri^{G/K}(S^{\infty\rreg_G}) \rightarrow \Pri^{\frac{G\times G}{H\times K}}(S^{\infty\rreg_G}\sm S^{\infty\rreg_G}) \rightarrow \Sp_{\Z/p}^{p^m}(\Pri^{G/(H\cap K)}(S^{\infty\rreg_G})).
\end{equation}
The first map in Equation \ref{eqn:productonprimitives} is the `product' for primitives (Definition \ref{definition:primitives}) and the second is `restriction' for primitives (Definition \ref{definition:primitivesrestriction}) along the diagonal inclusion $G\subset G\times G$.
\begin{definition}\label{definition:transverse}
Suppose that $G$ is a $p$-group, and let $H, K\in \mathcal{C}$ (Definition \ref{definition:theposetC}). We say that $H$ and $K$ are \emph{transverse} if the diagonal inclusion below is an isomorphism,
$$G/(H\cap K) \hookrightarrow G/H\times G/K.$$
Equivalently, $H$ and $K$ are transverse iff $\frac{[G:H][G:K]}{[G:(H\cap K)]}=1$. If the diagonal inclusion is not an isomorphism, we say $H$ and $K$ are \emph{nontransverse}.
\end{definition}

In the case where $H$ and $K$ are transverse, the composition of Equation \ref{eqn:productonprimitives} can be described explicitly in terms of a natural product on subgroup complexes, and this is done in Proposition \ref{prop:transversecase}. If $H$ and $K$ are nontransverse the situation is more subtle, and for our purposes the following weaker result suffices. Consider the stable analogue of Equation \ref{eqn:pgpfixptssymmpwrrepsph}, which results from Propositions \ref{prop:modpstablefinitesymmetricpowers} and \ref{prop:decompositionafterstableprimitives}:
\begin{equation}
\label{eqn:pgpfixptssymmpwrrepsphstable}
\Phi^G(\Sp_{\Z/p}^{\infty}(\Sigma^{\infty G}S^0))\simeq \bigvee\limits_{H\in\mathcal{C}}\Sp_{\Z/p}^{\infty}(\Sigma^{\infty}S^0)\sm \Pri^{G/H}(S^{\infty\rreg_G}).
\end{equation}
The stable version of Equation \ref{eqn:productonprimitives} (with the middle term omitted) is
\begin{equation}\label{eqn:productonprimitivesstable}
\Sigma^{\infty}\Pri^{G/H}(S^{\infty\rreg_G})\sm \Sigma^{\infty}\Pri^{G/K}(S^{\infty\rreg_G}) \rightarrow \Sp_{\Z/p}^{p^m}(\Sigma^{\infty}S^0)\sm \Sigma^{\infty}\Pri^{G/(H\cap K)}(S^{\infty\rreg_G})
\end{equation}
\begin{definition}\label{definition:reducedsymmetricpowers}
Let $\overline{\Sp}_{\Z/p}^n(-)$ denote the functor
$$\overline{\Sp}_{\Z/p}^n(-):=\Sp_{\Z/p}^n(-)/\Sp_{\Z/p}^{n-1}(-).$$
There is an obvious quotient map $\Sp_{\Z/p}^n(-) \rightarrow \overline{\Sp}_{\Z/p}^n(-)$.
\end{definition}
Proposition \ref{prop:nontransversecase} says that the following composition is zero on $\F_p$-homology:
$$\xymatrix{\Sigma^{\infty}\Pri^{G/H}(S^{\infty\rreg_G})\sm \Sigma^{\infty}\Pri^{G/K}(S^{\infty\rreg_G})\ar@{-->}[dr] \ar[r] &  \Sp_{\Z/p}^{p^m}(\Sigma^{\infty}S^0)\sm \Sigma^{\infty}\Pri^{G/(H\cap K)}(S^{\infty\rreg_G})\ar[d]\\
& \overline{\Sp}_{\Z/p}^{p^m}(\Sigma^{\infty}S^0)\sm \Sigma^{\infty}\Pri^{G/(H\cap K)}(S^{\infty\rreg_G})}.$$
This requires some work to do, and is a result of the following more general lemma. Let $V$ be an $\F_p$-vector space of rank $m$, viewed as an abelian group. Let $X$ be a pointed $V$-space, and let $\ell \gg 0$ be a positive integer,. Then we prove (Lemma \ref{lemma:zeroonhomology}) that the composition below, where the first map is the restriction and the second is the quotient
$$\Pri^V(\Sigma^{\ell}X) \rightarrow \Sp_{\Z/p}^{p^m}(\Sigma^{\ell}X) \rightarrow \overline{\Sp}_{\Z/p}^{p^m}(\Sigma^{\ell}X)$$
is zero on $\F_p$-homology in degrees $\ell, \ell+1, \ldots, p\ell-1$. We then take $X=\Pri^{G/(H\cap K)}(S^{\infty\rreg_G}\sm S^{\infty\rreg_G})$ and $V=\frac{G/H\times G/K}{G/(H\cap K)}$, and let $\ell\to\infty$ to deduce Proposition \ref{prop:nontransversecase}.

\subsection{Subgroup Complexes and Transverse Subgroups}
\label{subsec:subgroupcomplexesandtransversesubgroups}
Recall that $\Po(G)_{\supset H}$ denotes the poset of proper subgroups of $G$ which strictly contain $H$ (Definition \ref{definition:subgroupcomplex}), and $\Po(G)_{\supset H}^{\Diamond}$ denotes its unreduced suspension, regarded as a pointed space. We write $\hat{\Po}(G)_{\supseteq H}$ to mean the poset of all subgroups of $G$ which contain $H$ (including both $H$ and $G$ itself). Then there is a map of posets
$$\hat{\Po}(G)_{\supseteq H}\times \hat{\Po}(G)_{\supseteq K} \rightarrow \hat{\Po}(G)_{\supseteq H\cap K}$$
$$(H', K') \mapsto H'\cap K'$$
If $H$ and $K$ are transverse, then $H'\cap K'=H\cap K$ if and only if $H'=H$ and $K'=K$. Therefore, we obtain a product map on the unreduced join of the two spaces $\Po(G)_{\supset H}^{\Diamond}$ and $\Po(G)_{\supset K}^{\Diamond}$.
$$\Po(G)_{\supset H}^{\Diamond} \star\Po(G)_{\supset K}^{\Diamond} \rightarrow \Po(G)_{\supset H\cap K}^{\Diamond}.$$
Note that the join of two spaces is the same as the suspension of their smash product.
\begin{definition}\label{definition:subgroupcomplexproduct}
Let $G$ be a group, and let $H, K\unlhd G$ be subgroups such that the diagonal inclusion $G/(H\cap K) \rightarrow G/H\times G/K$ is an isomorphism. The assignment $(H',K') \mapsto H'\cap K'$ yields a map of pointed spaces
$$\Sigma \Po(G)_{\supset H}^{\Diamond}\sm \Sigma \Po(G)_{\supset K}^{\Diamond} \rightarrow \Sigma \Po(G)_{\supset H\cap K}^{\Diamond},$$
which we call the \emph{subgroup complex product.}
\end{definition}
\begin{proposition}\label{prop:transversecase}
Let $G$ be a $p$-group. Let $H, K\in\mathcal{C}$ be transverse subgroups of $G$. Under the equivalence
$$\Pri^{G/H}(S^{\infty\rreg_G})\simeq \Sigma \Po(G)^{\Diamond}_{\supset H}\sm B(G/H)_+$$
of Proposition \ref{prop:stableprimitivespgroup}, the composite map
$$\Pri^{G/H}(S^{\infty\rreg_G})\sm \Pri^{G/K}(S^{\infty\rreg_G}) \rightarrow \Pri^{G/(H\cap K)}(S^{\infty\rreg_G})$$
in Equation \ref{eqn:productonprimitives} is give by the subgroup complex product, smashed with the equivalence
$$B(G/H)_+\sm B(G/K)_+ \rightarrow B(G/(H\cap K))_+$$
\end{proposition}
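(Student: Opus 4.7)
The approach is to trace both constituent maps of Equation \ref{eqn:productonprimitives}---the ``product for primitives'' homeomorphism and the ``restriction along the diagonal $G \hookrightarrow G \times G$''---through the covering arguments that produced the equivalence of Proposition \ref{prop:stableprimitivespgroup}. Since $H, K \in \mathcal{C}$ are normal, we have $W_G(H) = G/H$, $W_G(K) = G/K$, and $W_G(H \cap K) = G/(H \cap K)$, with trivial action on the relevant subgroup posets. Consequently the equivalences of Proposition \ref{prop:stableprimitivespgroup} decompose cleanly into a poset factor and a classifying-space factor, and it suffices to verify the formula on each factor separately.

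First I would describe the smash-product side via a ``doubled'' covering. Applying the covering argument of Proposition \ref{prop:stableprimitives} to the $(G \times G)$-space $S^{\infty \rreg_G} \sm S^{\infty \rreg_G}$ with the doubled action yields a covering indexed by the product poset $\hat{\Po}(G)_{\supset H} \times \hat{\Po}(G)_{\supset K}$, assigning to $(H', K')$ the space $S^{\infty \rreg_G^{H'}} \sm S^{\infty \rreg_G^{K'}}$. This covering is noncontractible only at the top pair $(G, G)$, and together with the trivial Weyl action it produces the equivalence
\begin{equation*}
\Pri^{G/H}(S^{\infty \rreg_G}) \sm \Pri^{G/K}(S^{\infty \rreg_G}) \simeq \Sigma \Po(G)^{\Diamond}_{\supset H} \sm \Sigma \Po(G)^{\Diamond}_{\supset K} \sm B((G/H) \times (G/K))_+
\end{equation*}
compatibly with the homeomorphism $\Pri^{G/H}(S^{\infty \rreg_G}) \sm \Pri^{G/K}(S^{\infty \rreg_G}) \cong \Pri^{(G \times G)/(H \times K)}(S^{\infty \rreg_G} \sm S^{\infty \rreg_G})$ coming from the product structure on primitives.

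Next I would compare this doubled covering with the covering of $\Pri^{G/(H \cap K)}(S^{\infty \rreg_G})$, under the identification $S^{\infty \rreg_G} \sm S^{\infty \rreg_G} \cong S^{\infty \rreg_G}$ by interleaving. The natural inclusion $S^{\infty \rreg_G^{H'}} \sm S^{\infty \rreg_G^{K'}} \hookrightarrow S^{\infty \rreg_G^{H' \cap K'}}$ is compatible with the poset map $(H', K') \mapsto H' \cap K'$ from $\hat{\Po}(G)_{\supset H} \times \hat{\Po}(G)_{\supset K}$ to $\hat{\Po}(G)_{\supset H \cap K}$. Transversality is precisely the statement that the only pair with $H' \cap K' = H \cap K$ is $(H, K)$ itself, so this poset map sends the distinguished ``noncontractible vertex'' $(G, G)$ of the source to the distinguished vertex $G$ of the target. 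A naturality argument for the covering lemma (in the spirit of the second lemma following Lemma \ref{lemma:coveringhomotopytype}) then identifies the induced map on homotopy colimits as the subgroup complex product of Definition \ref{definition:subgroupcomplexproduct}, smashed with the classifying-space map $B((G/H) \times (G/K))_+ \to B(G/(H \cap K))_+$ induced by the diagonal $G \hookrightarrow G \times G$; by transversality, the corresponding diagonal map $G/(H \cap K) \to (G/H) \times (G/K)$ is an isomorphism, and so this classifying-space map is an equivalence.

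The main obstacle is the naturality bookkeeping: one must carefully arrange cofibrant coverings so that the restriction map between two $\Sp_{\Z/p}^{\infty}$-fixed-point spaces descends correctly to a map of homotopy colimits of coverings, and then verify that the covering lemma identifies this induced map with the asserted combinatorial product. A secondary subtlety worth flagging is that the natural restriction map from the primitive $\Pri^{(G \times G)/(H \times K)}$ of a $(G \times G)$-space lands a priori in $\Sp_{\Z/p}^{p^m}(\Pri^{G/(H \cap K)})$, not in $\Pri^{G/(H \cap K)}$ itself; the transversality hypothesis is exactly what forces $p^m = 1$, eliminating the symmetric-power decoration and enabling the clean formula. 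Without transversality this simplification fails, which is why the nontransverse case requires the different treatment of Proposition \ref{prop:nontransversecase}.
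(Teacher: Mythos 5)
Your proposal is correct and follows essentially the same route as the paper: cover the fixed-point spaces $S^{\infty\rreg_G^H}$ and $S^{\infty\rreg_G^K}$ by the subspaces $S^{\infty\rreg_G^{H'}}$ indexed over subgroups containing $H$ (resp.\ $K$), observe that the interleaving map sends the $(H',K')$-piece into the $(H'\cap K')$-piece, and use transversality (only $(H,K)$ hits $H\cap K$) to descend to the layers, identifying the result as the subgroup complex product; the classifying-space factor is handled exactly as in the paper via the trivial $G/(H\cap K)$-action. Your explicit remark that transversality forces $p^m=1$, so the restriction lands in $\Pri^{G/(H\cap K)}$ itself rather than a symmetric power, is a point the paper leaves implicit but is consistent with its argument.
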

\begin{proof}

The fixed point space $S^{\infty\rreg_G^H}$ has a covering indexed over the poset $\Po(G)_{\supseteq H}$. The cover element corresponding to the subgroup $H'\supseteq H$ is $S^{\infty\rreg_G^{H'}}$. The fixed point space $S^{\infty\rreg_G^K}$ has a similar covering. The behavior of the interleaving map
$$S^{\infty\rreg_G^H} \sm S^{\infty\rreg_G^K} \subset S^{\infty\rreg_G^{H\cap K}}\sm S^{\infty\rreg_G^{H\cap K}}\simeq S^{\infty\rreg_G^{H\cap K}}$$
with respect to this covering sends $(H', K')$ to $H'\cap K'$. Because $H$ and $K$ are transverse, one has that $H'\cap K'=H\cap K$ if and only if $H'=H$ and $K'=K$. Therefore, the interleaving map descends to the associated graded
$$(S^{\infty\rreg_G^H}/\bigcup\limits_{H'\supsetneq H}S^{\infty\rreg_G^{H'}}) \sm (S^{\infty\rreg_G^K}/\bigcup\limits_{K'\supsetneq K}S^{\infty\rreg_G^{K'}}) \rightarrow (S^{\infty\rreg_G^{H\cap K}}/\bigcup\limits_{L\supsetneq H\cap K}S^{\infty\rreg_G^L})$$
and this map is given by the subgroup complex product
$$\Sigma\Po(G)_{\supset H}^{\Diamond}\sm \Sigma\Po(G)_{\supset K}^{\Diamond} \rightarrow \Sigma\Po(G)_{\supset H\cap K}^{\Diamond}.$$
The action of $G/(H\cap K)$ on each side is trivial, so upon applying $G/(H\cap K)$ homotopy orbits, we get the desired result.

\end{proof}
\subsection{Cyclic Powers and Primitives}
\label{subsec:cyclicpowersandprimitives}
Let $Y$ be a pointed space. The symmetric powers of $Y$ are defined in terms of the symmetric group. We will define \emph{mod $p$ cyclic powers} in terms of elementary abelian $p$-groups. The mod $p$ cyclic powers of $Y$ will be used to describe the restriction map on primitives.

\begin{definition}\label{definition:cyclicpowers}
Let $m$ be any positive integer, and let $V\cong (\Z/p)^m$. Let $\fun(V,Y)$ be the space of maps from $V$ (viewed as a discrete space) to $Y$, with an action of $V$ given by
$$(vf)(w)=f(v^{-1}w) \hskip 0.3in v,w\in V, f\in \fun(V,Y).$$
For two functions $f, f'\in \fun(V,Y)$, suppose that there are $p$ points $v_1, \ldots, v_p\in V$ such that
$$f(v_1)=\cdots=f(v_p) , \hskip 0.2in f'(v_1)=\cdots=f(v_p) , \hskip 0.2in \mathrm{and}$$
$$f(v)=f'(v) \hskip 0.3in \forall v\in V-\{v_1, \ldots, v_p\}.$$
Then we write $f\sim f'$, and let $\sim$ denote the equivalence relation on $\fun(V,Y)$ generated by the above conditions. The quotient $\frac{\fun(V,Y)/\sim}{V}$ is the \emph{$V$-th mod $p$ cyclic power} of $Y$, and is denoted by $\cyc_{\Z/p}^V(Y)$. There is an obvious quotient map $\cyc_{\Z/p}^V(Y) \rightarrow \Sp_{\Z/p}^{p^m}(Y).$
\end{definition}
{\bf Note:} The equivalence relation $\sim$ is what makes it `mod $p$'. Without this relation, we obtain a construction $\fun(V,Y)/V$ which we would call $\cyc^V(Y)$ if we had any use for it.
\begin{definition}
Let ${\fun}_0(V,Y)\subset \fun(V,Y)$ denote the subspace consisting of those maps which send at least one vector $v$ to the basepoint of $Y$, and let $\overline{\fun}(V,Y):=\fun(V,Y)/\fun_0(V,Y)$. The quotient $\frac{\overline{\fun}(V,Y)/\sim}{V}$ is called the \emph{reduced $V$-th mod $p$ cyclic power of $Y$}, and is denoted by $\overline{\cyc}_{\Z/p}^V(Y)$. There is a commutative square of
$$\xymatrix{\cyc_{\Z/p}^V(Y)\ar[r]\ar[d] & \Sp_{\Z/p}^{p^m}(Y)\ar[d]\\
\overline{\cyc}_{\Z/p}^V(Y)\ar[r] & \overline{\Sp}_{\Z/p}^{p^m}(Y)}.
$$
\end{definition}

Now suppose that $Y$ is a pointed $V$-space, and let $Y^{\{e\}}$ denote its underlying points. Let $\varphi:Y \rightarrow \fun(V,Y^{\{e\}})$ denote the $V$-equivariant map defined by $(\varphi(y))(v)=vy$. Let $0\in \fun(V, Y^{\{e\}})$ denote the function which sends every element of $V$ to the basepoint of $Y^{\{e\}}$. It is easily seen that if $W\subset V$ is a nonzero subspace and $y \in Y^W$, then $\varphi(y) \sim 0$. Therefore, $\varphi$ determines a map (of pointed spaces)
\begin{equation}\label{eqn:primitivestocyclicpowers}
\overline{\varphi}: \Pri^V(Y):=\frac{Y/\bigcup\limits_{W\neq 0}Y^W}{V} \rightarrow \frac{\fun(V,Y^{\{e\}})/\sim}{V}=:\cyc_{\Z/p}^V(Y^{\{e\}})
\end{equation}
Crucially for us, we have a commutative diagram where the composition along the top row is the restriction map of Definition \ref{definition:primitivesrestriction}:
\begin{equation}\label{eqn:cyclictosymmetric}
\xymatrix{\Pri^V(Y)\ar[r]^{\overline{\varphi}}\ar@{-->}[dr] & \cyc_{\Z/p}^V(Y^{\{e\}})\ar[r]\ar[d] & \Sp_{\Z/p}^{p^m}(Y^{\{e\}})\ar[d]\\
& \overline{\cyc}_{\Z/p}^V(Y^{\{e\}})\ar[r] & \overline{\Sp}_{\Z/p}^{p^m}(Y^{\{e\}})}.
\end{equation}

The entire preceding discussion is natural in $Y$, i.e. all maps arise from natural transformations of functors from pointed $V$-spaces to pointed spaces. The following lemma describes the behavior of the dotted map in the stable range.

\begin{lemma}\label{lemma:zeroonhomology}
Let $V\cong (\Z/p)^m$, let $X$ be a pointed space with an action of $V$, and let $\ell\gg 0$ be a positive integer. Then letting $Y=\Sigma^{\ell}X$ in Equation \ref{eqn:cyclictosymmetric}, the composition
$$\Pri^V(\Sigma^{\ell}X) \rightarrow \cyc_{\Z/p}^V(\Sigma^{\ell}X^{\{e\}}) \rightarrow \overline{\cyc}_{\Z/p}^V(\Sigma^{\ell}X^{\{e\}})$$
is zero on reduced $\F_p$-homology in degrees $\ell, \ell+1, \ldots, p\ell-1$. Therefore, the map of spectra
$$\Pri^V(\Sigma^{\infty}X) \rightarrow \cyc_{\Z/p}^V(\Sigma^{\infty}X^{\{e\}}) \rightarrow\overline{\cyc}_{\Z/p}^V(\Sigma^{\infty}X)$$
is zero on $\F_p$-homology. And therefore, the composition of the restriction map (Definition \ref{definition:primitivesrestriction}) with the quotient map is zero on $\F_p$-homology:
$$\Pri^V(\Sigma^{\infty}X) \rightarrow \Sp_{\Z/p}^{p^m}(\Sigma^{\infty}X^{\{e\}}) \rightarrow\overline{\Sp}_{\Z/p}^{p^m}(\Sigma^{\infty}X).$$
\end{lemma}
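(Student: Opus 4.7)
The plan is to reduce the claim to a connectivity analysis of the target together with an argument for one residual case. Setting $Y = \Sigma^\ell X^{\{e\}}$, the first observation is that the $\sim$ relation on $\overline{\fun}(V, Y) = Y^{\sm p^m}$ can be simplified as follows: if a tuple $f$ has $p$ coordinates with a common non-basepoint value $y$, then via $\sim$ we can replace those $p$ coordinates by the basepoint, which forces the whole smash tuple to be the basepoint. Consequently $Y^{\sm p^m}/\!\sim\, \cong Y^{\sm p^m}/D$, where
\[
D = \bigcup_{\substack{S\subset V\\ |S|=p}} \Delta_S\bigl(Y^{\sm(p^m - p + 1)}\bigr)
\]
is the union of the ``constant on $S$'' subspaces. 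The composite of interest factors through the open complement $Y^{\sm p^m}\setminus D$: for a primitive $y$ (whose $V$-isotropy is trivial), the tuple $\varphi(y) = (vy)_{v\in V}$ has pairwise distinct coordinates and so avoids $D$.

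For $m \ge 2$, a connectivity argument suffices. Each piece $\Delta_S\cong Y^{\sm(p^m-p+1)}$ has connectivity $(p^m-p+1)\ell - 1 \ge (p+1)\ell - 1$. A Mayer--Vietoris/inclusion-exclusion argument (whose bounds improve with $\ell$) shows that $D$ is $(p\ell - 1)$-connected for $\ell$ sufficiently large. Hence $Y^{\sm p^m}/D$ is $p\ell$-connected, and since taking $V$-orbits preserves this connectivity in the range, $\overline{\cyc}_{\Z/p}^V(Y)$ has trivial reduced $\F_p$-homology in degrees $\ell, \ldots, p\ell - 1$. The claim follows automatically in that range.

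The case $m = 1$ is the main obstacle. Here $D = \Delta(Y) \cong Y$ has connectivity only $\ell - 1$, and while the $p$-fold diagonal $\Delta\colon Y \to Y^{\sm p}$ is null-homotopic for $\ell \ge 1$, the cofiber splits as $Y^{\sm p}\vee \Sigma Y$, and the $\Sigma Y$-summand carries nonzero $\F_p$-homology exactly in the range $\ell + 1, \ldots, p\ell - 1$ that we care about. The plan is to show that the composite $\Pri^V(Y) \to \overline{\cyc}_{\Z/p}^{\Z/p}(Y)$ misses this $\Sigma Y$-summand on $\F_p$-homology. The idea is to exploit that the map $y \mapsto (vy)_{v\in V}$ lands in $Y^{\sm p}\setminus D$, and to choose a null-homotopy of $\Delta$ (using the free $\Z/p$-action on the off-diagonal locus) so that the connecting map of the cofiber sequence factors through a space on which the image classes are trivial. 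A spectral sequence for the Mayer--Vietoris cover of $\overline{\cyc}$ by $(Y^{\sm p} \setminus D)/V$ and a contractible neighborhood of the basepoint should carry out this analysis cleanly.

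Finally, the spectrum-level statements of the lemma are deduced by letting $\ell$ grow: any fixed $\F_p$-homology class of $\Pri^V(\Sigma^\infty X)$ sits in a finite degree, and by choosing $\ell$ large enough relative to that degree, the class falls inside the vanishing window $[\ell, p\ell - 1]$. Hence the stable map $\Pri^V(\Sigma^\infty X) \to \overline{\cyc}_{\Z/p}^V(\Sigma^\infty X^{\{e\}})$ is zero on all of $\F_p$-homology, and post-composing with the natural quotient $\overline{\cyc}_{\Z/p}^V \to \overline{\Sp}_{\Z/p}^{p^m}$ yields the final assertion.
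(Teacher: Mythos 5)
Your identification $\overline{\fun}(V,Y)/\!\sim\;\cong Y^{\sm p^m}/D$ is correct, and your final step (letting $\ell\to\infty$ to get the spectrum-level statement) is exactly how the paper concludes. But the argument for $m\ge 2$ has a genuine gap: Mayer--Vietoris bounds the connectivity of a union only in terms of the connectivity of \emph{all} the multiple intersections, and here the pieces $\Delta_S$ intersect in progressively smaller smash powers, down to the thin diagonal $\cong Y$, which is only $(\ell-1)$-connected; these deep strata really do contribute low-dimensional homology to $D$. Concretely, take $p=2$, $m=2$, $X=S^0$, so $Y=S^\ell$ and $D$ is the fat diagonal in $Y^{\sm 4}$: then $Y^{\sm 4}/D$ is the one-point compactification of the configuration space $F_4(\R^\ell)$, and Borel--Moore/Poincar\'e duality gives $\tilde H_{\ell+3}(Y^{\sm 4}/D;\F_2)\cong H^{3(\ell-1)}(F_4(\R^\ell);\F_2)\neq 0$, hence $\tilde H_{\ell+2}(D;\F_2)\neq 0$; moreover this bottom class survives the orbit spectral sequence for the (basepoint-free) $V$-action, so $\overline{\cyc}_{\Z/p}^V(Y)$ itself has nonzero mod $2$ homology in degree $\ell+3\le p\ell-1$ once $\ell\ge 4$. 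So the target does \emph{not} vanish in the range $[\ell,p\ell-1]$, and no connectivity argument about the target can prove the lemma for $m\ge2$: the statement is irreducibly about the particular composite being null on homology.

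The second gap is that the case $m=1$, which you correctly identify as the main obstacle, is left as a plan ("choose a null-homotopy \ldots should carry out this analysis cleanly") rather than an argument --- and this case is precisely the entire content of the paper's proof. There one compares the cofiber sequences of the diagonal $\Delta\colon Y\to Y^{\times p}, Y^{\sm p}$ and the twisted diagonal $\Delta_{\mathrm{tw}}$, uses that $\tilde H_*(Y^{\sm p};\F_p)=0$ below degree $p\ell$ to identify $\tilde H_{*+1}(Y^{\sm p}/\im\Delta)\cong\tilde H_*(Y)$ in the range, and a short naturality chase shows the induced map $\tilde H_*(Y/Y^V)\to\tilde H_*(Y^{\sm p}/\im\Delta)$ vanishes there (this is exactly your "miss the $\Sigma Y$ summand" statement, but it needs the explicit chase); one then passes to the free $V$-quotient. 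For $m\ge2$ the paper does not analyze $D$ at all: it writes $V\cong L\oplus W$ with $\dim L=1$ and factors the restriction as $\Pri^V(-)\cong\Pri^L(\Pri^W(-))\to\overline{\cyc}_{\Z/p}^L(\Pri^W(-))\to\overline{\cyc}_{\Z/p}^L(\overline{\cyc}_{\Z/p}^W(-))\to\overline{\cyc}_{\Z/p}^V(-)$, applying the rank-one case to the first map via $\Pri^W(\Sigma^\ell X)=\Sigma^\ell\Pri^W(X)$. To repair your proposal you would need both to carry out the $m=1$ argument in full and to replace the false connectivity claim for $m\ge2$ by a reduction of this kind.
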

\begin{proof}
We first address the case where $\dim(V)=1$, i.e. $V\cong \Z/p$. Let $v\in V$ denote a generator. Write $Y=\Sigma^{\ell}X$. Then the $V$-space $\fun(V,Y^{\{e\}})$ can be identified with the Cartesian product $Y^{\times p}$ with the permutation action of the group $\Z/p$.

Let $\Delta:Y \rightarrow Y^{\times p}$ denote the diagonal map, and let $\Delta_{\mathrm{tw}}:Y \rightarrow Y^{\times p}$ denote the twisted diagonal map $\Delta_{\mathrm{tw}}(y)=(y, vy, v^2y, \ldots, v^{p-1}y)$. Consider the following commutative diagram of pointed spaces, where the maps $Y^V \rightarrow Y$ are the inclusion of the fixed points, and the columns are cofiber sequences.
\begin{equation}\label{eqn:zeroonhomologydiagram1}
\xymatrix{Y^V\ar[d]\ar[r] & Y\ar@{=}[r]\ar[d]^{\Delta} & Y\ar[d]^{\Delta}\\
Y\ar[r]^{\Delta_{\mathrm{tw}}}\ar[d] & Y^{\times p}\ar[r]\ar[d] & Y^{\sm p}\ar[d]\\
Y/Y^V\ar[r] & Y^{\times p}/\im(\Delta)\ar[r] & Y^{\sm p}/\im(\Delta)}
\end{equation}
After quotienting by the free action of $V$, the bottom row of Diagram \ref{eqn:zeroonhomologydiagram1} is the composition
$$\Pri^V(Y) \rightarrow \cyc_{\Z/p}^V(Y^{\{e\}})\rightarrow \overline{\cyc}_{\Z/p}^V(Y^{\{e\}}).$$
Therefore, it suffices to prove that the composition along the bottom row of Diagram \ref{eqn:zeroonhomologydiagram1} is zero on homology in the stable range.

The space $Y^{\sm p}$ has homology concentrated in degree $p\ell$ and higher, and so it is stably contractible. Apply the functor $\tilde{H}_*(-)$ to Diagram \ref{eqn:zeroonhomologydiagram1} for any $\ell\le *\le p\ell-1$, omitting the middle column and adding another row at the top to obtain
\begin{equation}\label{eqn:zeroonhomologydiagram2}
\xymatrix{\tilde{H}_{*+1}(Y/Y^V)\ar[r]^{h}\ar[d]_{j}& \tilde{H}_{*+1}(Y^{\sm p}/\im(\Delta))\ar@{=}[d]\\
\tilde{H}_*(Y^V)\ar[r]^{f}\ar[d]_{g} & \tilde{H}_*(Y)\ar[d]\\
\tilde{H}_*(Y)\ar[r]\ar[d] & 0\ar[d]\\
\tilde{H}_*(Y/Y^V)\ar[r]^{\Sigma h}& \tilde{H}_*(Y^{\sm p}/\im(\Delta)).}
\end{equation}
The maps $f$ and $g$ are equal, and $g\circ j=0$. Thus, $f\circ j=0$, and thus $h=0$. Thus, $\Sigma h=0$, as desired. 

Now suppose $\dim(V)\ge 2$. Write $V\cong L\oplus W$ where $\dim(L)=1$. There is an obvious natural transformation $\fun(L, \fun(W, -)) \rightarrow \fun(V, -)$ , and therefore a natural transformation
$$\cyc_{\Z/p}^L(\cyc_{\Z/p}^W(-)) \rightarrow \cyc_{\Z/p}^V(-).$$ This natural transformation descends to the reduced cyclic powers $\overline{\cyc}_{\Z/p}^L(\overline{\cyc}_{\Z/p}^W(-)) \rightarrow \overline{\cyc}_{\Z/p}^V(-)$. The composition
$$\Pri^V(-)\cong \Pri^L(\Pri^W(-)) \rightarrow \overline{\cyc}_{\Z/p}^L(\Pri^W(-))\rightarrow \overline{\cyc}_{\Z/p}^L(\overline{\cyc}_{\Z/p}^W(-)) \rightarrow \overline{\cyc}_{\Z/p}^V(-)$$
is the restriction $\Pri^V(-) \rightarrow \overline{\cyc}_{\Z/p}^V(-)$. By the dimension 1 case, the first step of this composition
$$\Pri^L(\Pri^W(\Sigma^{\ell}X))=\Pri^L(\Sigma^{\ell}\Pri^W(X)) \rightarrow \overline{\cyc}_{\Z/p}^L(\Sigma^{\ell}\Pri^W(X))=\overline{\cyc}_{\Z/p}^L(\Pri^W(\Sigma^{\ell}X))$$
is zero on $\F_p$-homology in degrees $\ell, \ell+1, \ldots, p\ell-1$. Therefore, the composition $\Pri^V(\Sigma^{\ell}X) \rightarrow \overline{\cyc}_{\Z/p}^V(\Sigma^{\ell}X)$ is zero on $\F_p$-homology in degrees $\ell, \ell+1, \ldots, p\ell-1$.

\end{proof}

\begin{proposition}\label{prop:nontransversecase}
Let $G$ be a $p$-group. Let $H, K\in\mathcal{C}$ be nontransverse subgroups of $G$. Let $p^m=\frac{[G:H][G:K]}{G:(H\cap K)]}$. Consider the following diagram where the horizontal map is the product of Diagram \ref{eqn:productonprimitivesstable} and the vertical map is the quotient
$$\xymatrix{\Sigma^{\infty}\Pri^{G/H}(S^{\infty\rreg_G})\sm \Sigma^{\infty}\Pri^{G/K}(S^{\infty\rreg_G})\ar@{-->}[dr] \ar[r] &  \Sp_{\Z/p}^{p^m}(\Sigma^{\infty}S^0)\sm \Sigma^{\infty}\Pri^{G/(H\cap K)}(S^{\infty\rreg_G})\ar[d]\\
& \overline{\Sp}_{\Z/p}^{p^m}(\Sigma^{\infty}S^0)\sm \Sigma^{\infty}\Pri^{G/(H\cap K)}(S^{\infty\rreg_G})}.$$
The composition (dotted) is zero on $\F_p$-homology.
\end{proposition}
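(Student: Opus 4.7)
The plan is to reduce Proposition~\ref{prop:nontransversecase} to Lemma~\ref{lemma:zeroonhomology} using a natural $V$-action that arises from the decomposition $A = (G/H)\times (G/K)$, its diagonal subgroup $B = G/(H\cap K)$, and the quotient $V = A/B$. Set $Z := (S^{\infty\rreg_G})^H \sm (S^{\infty\rreg_G})^K$, which is the $(H\times K)$-fixed-point $A$-space of $S^{\infty\rreg_G}\sm S^{\infty\rreg_G}$. By the product property of primitives (item~3 following Definition~\ref{definition:primitives}), the source of our composition is $\Sigma^\infty\Pri^A(Z)$, and the map to the target is the restriction map of Definition~\ref{definition:primitivesrestriction} along $\Delta(G)\subset G\times G$ with respect to $H\times K\lhd G\times G$ and the intersection $\Delta(H\cap K)$.

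First I would observe that this restriction map factors through $\Sp_{\Z/p}^{p^m}(\Pri^B(Z))$: it sends a primitive $[y]\in\Pri^A(Z)$ to $\sum_{v\in V}[vy]_B$, and each $vy$ still lies in $Z$ since $A$ preserves $Z$. Combining the interleaving equivalence $S^{\infty\rreg_G}\sm S^{\infty\rreg_G}\simeq S^{\infty\rreg_G}$ with the nonequivariant form of Lemma~\ref{lemma:finitesymmetricpowersstablygenuine} (namely $\Sp_{\Z/p}^{p^m}(\Sigma^\infty S^0)\sm\Sigma^\infty Y\simeq \Sp_{\Z/p}^{p^m}(\Sigma^\infty Y)$), the target of the proposition becomes $\overline{\Sp}_{\Z/p}^{p^m}(\Sigma^\infty\Pri^{G/(H\cap K)}(S^{\infty\rreg_G}\sm S^{\infty\rreg_G}))$, into which $\overline{\Sp}_{\Z/p}^{p^m}(\Sigma^\infty\Pri^B(Z))$ naturally includes. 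So it suffices to prove that $\Sigma^\infty\Pri^A(Z)\to\overline{\Sp}_{\Z/p}^{p^m}(\Sigma^\infty\Pri^B(Z))$ is zero on $\F_p$-homology.

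The key identification is $\Pri^A(Z)\cong \Pri^V(\Pri^B(Z))$, where $V$ acts on $\Pri^B(Z)$ via the quotient $A\to V$ (well-defined because $A$ is abelian, hence $B\lhd A$). Unwinding definitions, a point $z\in Z$ has trivial $A$-stabilizer exactly when it has trivial $B$-stabilizer and the $B$-orbit $Bz\in Z/B$ has trivial $V$-stabilizer; this yields the identification after passing to quotients. Under it, the factored restriction map agrees with the composition $\Pri^V(\Pri^B(Z)) \xrightarrow{\overline{\varphi}} \cyc_{\Z/p}^V(\Pri^B(Z)) \to \Sp_{\Z/p}^{p^m}(\Pri^B(Z))$ of Diagram~\ref{eqn:cyclictosymmetric}, since both send a primitive $[y]$ to the sum $\sum_{v\in V}[vy]_B$ over $V$-coset representatives in $A$.

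Finally I would invoke Lemma~\ref{lemma:zeroonhomology} with the $V$-space $X := \Pri^B(Z)$ (letting $\ell\to\infty$) to conclude that $\Pri^V(\Sigma^\infty X)\to\overline{\Sp}_{\Z/p}^{p^m}(\Sigma^\infty X)$ is zero on $\F_p$-homology. Post-composing with the map induced by the inclusion $\Pri^B(Z)\hookrightarrow \Pri^{G/(H\cap K)}(S^{\infty\rreg_G}\sm S^{\infty\rreg_G})$ preserves the vanishing and yields the proposition. The main obstacle is the identification $\Pri^A(Z)\cong\Pri^V(\Pri^B(Z))$ and the compatibility of the two restriction maps; both require careful stabilizer bookkeeping (using crucially that $A$ is abelian), after which the application of the lemma is immediate.
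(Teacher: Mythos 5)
Your proposal is correct and takes essentially the same route as the paper: identify the source as $\Pri^V$ of a suitable $V$-space, recognize the product-followed-by-quotient map as the restriction-to-reduced-symmetric-power composite of Diagram \ref{eqn:cyclictosymmetric}, and invoke the last line of Lemma \ref{lemma:zeroonhomology}. The only difference is your choice of auxiliary space $X=\Pri^B(Z)$ with $Z=(S^{\infty\rreg_G})^H\sm (S^{\infty\rreg_G})^K$, where the paper instead takes $X=\Pri^{G/(H\cap K)}(S^{\infty\rreg_G}\sm S^{\infty\rreg_G})$ with its residual $V$-action; your version simply makes explicit the stabilizer bookkeeping and the factorization through $\Sp_{\Z/p}^{p^m}(\Pri^B(Z))$ that the paper leaves implicit.
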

\begin{proof}
Take $X=\Pri^{G/(H\cap K)}(S^{\infty\rreg_G}\sm S^{\infty\rreg_G})$, where $G$ acts diagonally on $S^{\infty\rreg_G}\sm S^{\infty\rreg_G}$. Let $V=\frac{G/H\times G/K}{G/(H\cap K)}$. Observe that $X$ has a residual action of $V$, and
$$\Pri^V(\Sigma^{\infty}X) \simeq \Sigma^{\infty}\Pri^{G/H}(S^{\infty\rreg_G})\sm \Sigma^{\infty}\Pri^{G/K}(S^{\infty\rreg_G}).$$
The last line of Lemma \ref{lemma:zeroonhomology} immediately implies the desired result.
\end{proof}

\section{The first layer of the filtration}
\label{sec:firstcofiber}
Let $G$ be a $p$-group, and let $X$ be any pointed $G$-space. Let $\Aff_1\simeq (\Z/p)\rtimes \GL_1(\F_p)$ be the group consisting of affine transformation of the one-dimensional vector space over $\F_p$, and fix an inclusion $\Aff_1\subseteq \Sigma_p$ into the symmetric group on $p$ letters. The equivariant classifying space $B_G(-)$ is a functor from groups to $G$-space, and is constructed in Definition \ref{definition:equivariantclassifyingspace}. The notion of a $p$-local equivalence of spectra is Definition \ref{definition:plocal}. The main result of this section is the following proposition, which is proven here using several intermediate results to be proven in the body of this section.

\begin{proposition}\label{prop:firstcofiber}
There is a $p$-local equivalence of genuine $G$-spectra 
$$\Sp_{\Z/p}^p(\Sigma^{\infty G}X)/\Sp^1(\Sigma^{\infty G}X) \simeq X\sm S^1\sm \Sigma_+^{\infty G}B_G\Aff_1.$$
\end{proposition}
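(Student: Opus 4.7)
The strategy is to reduce to $X=S^0$ using Lemma \ref{lemma:finitesymmetricpowersstablygenuine}, construct a natural map $\psi: S^1 \sm \Sigma_+^{\infty G}B_G\Aff_1 \to \Sp_{\Z/p}^p(\Sigma^{\infty G}S^0)/\Sp^1(\Sigma^{\infty G}S^0)$, and verify that $\psi$ is a $p$-local equivalence on every geometric fixed point. Since $X \sm \Sp^n(\Sigma^{\infty G}S^0) \simeq \Sp^n(\Sigma^{\infty G}X)$ by that lemma, and the mod-$p$ projection $\Sp^n \twoheadrightarrow \Sp_{\Z/p}^n$ is natural in $X$ with $X \sm (-)$ preserving cofibers, the reduction to $X=S^0$ is automatic. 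For the construction of $\psi$: the $\Aff_1$-action on $\F_p$ gives an inclusion $\Aff_1 \hookrightarrow \Sigma_p$, and hence a $G$-equivariant map $B_G\Aff_1 \to B_G\Sigma_p$; combining this with the transfer for the universal $G$-equivariant $p$-fold cover over $B_G\Sigma_p$ produces a map landing in $\Sp_{\Z/p}^p(\Sigma^{\infty G}S^0)$, and the single suspension arises from the cell structure on $S^{\wedge p}/\Sigma_p$ after passing to the cofiber.

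By the equivariant Whitehead theorem, it then suffices to verify that $\Phi^K\psi$ is a $p$-local equivalence for every $K \subseteq G$. On the LHS, Proposition \ref{prop:modpstablefinitesymmetricpowers} gives
$$\Phi^K\Sp_{\Z/p}^p(\Sigma^{\infty G}S^0) \simeq \bigvee_{[H_i]\subseteq K}\Sp_{\Z/p}^{\lfloor p/c_i\rfloor}(\Sigma^{\infty}S^0)\sm \Pri^{K/H_i}(S^{\infty\rreg_K}),$$
and since $K$ is a $p$-group only $c_i = [K:H_i] \in \{1,p\}$ contribute. The $c_i=1$ summand is $\Sp_{\Z/p}^p(\Sigma^{\infty}S^0)$; each $c_i=p$ summand is $\Pri^{K/H}(S^{\infty\rreg_K}) \simeq S^1 \sm B(\Z/p)_+$ by Proposition \ref{prop:stableprimitivespgroup}, since $\Po(K)_{\supset H}$ is empty for index-$p$ subgroups (so $\Po^{\Diamond} = S^0$) and the Weyl group $K/H$ acts trivially. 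Taking the cofiber with $\Sigma^\infty S^0 = \Phi^K\Sp^1$---which lands in the $c_i=1$ summand by naturality---and combining classical Mitchell--Priddy with the $p$-local equivalence $B\Aff_{1+}\simeq B\Sigma_{p+}$, one obtains
$$\Phi^K\bigl[\Sp_{\Z/p}^p(\Sigma^{\infty G}S^0)/\Sp^1\bigr] \simeq (S^1 \sm B\Aff_{1+}) \vee \bigvee_{[K:H]=p}(S^1 \sm B(\Z/p)_+).$$

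On the RHS, $(B_G\Aff_1)^K$ is a disjoint union over conjugacy classes of homomorphisms $K \to \Aff_1$, each component being $BC_{\Aff_1}(\phi(K))$. For the $p$-group $K$ these classes are: the trivial homomorphism (centralizer $\Aff_1$, giving $B\Aff_1$) and surjections $K \twoheadrightarrow \Z/p \subset \Aff_1$ classified up to conjugation by their index-$p$ kernel (each with centralizer $\Z/p$, giving $B(\Z/p)$). Smashing with $S^1$ yields the same wedge summand-for-summand as in the LHS computation. The main obstacle will be verifying that $\psi$ implements this matching on the nose---that the trivial-homomorphism summand corresponds to the nonequivariant Mitchell--Priddy factor and each index-$p$-kernel summand corresponds to its primitive. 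I would address this by unwinding $\psi$ on $K$-fixed points against the isotropy covering of $S^{\infty\rreg_K}$ used in the proof of Proposition \ref{prop:stablefinitesymmetricpowers}, together with the naturality of the wedge decomposition in Proposition \ref{prop:modpstablefinitesymmetricpowers}.
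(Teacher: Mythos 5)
Your fixed-point bookkeeping on both sides is accurate (the decomposition of $\Phi^K\Sp_{\Z/p}^p$ via Proposition \ref{prop:modpstablefinitesymmetricpowers}, the identification $\Pri^{K/H}(S^{\infty\rreg_K})\simeq S^1\sm B(\Z/p)_+$ for maximal $H$, and the formula for $(B_G\Aff_1)^K$ in terms of conjugacy classes of homomorphisms), but the proof has a genuine gap at its center: the map $\psi$ is never actually constructed, and as sketched it does not obviously exist. The ``sum over the fiber'' transfer for the universal $p$-fold cover over $B_G\Sigma_p$ sends a labelled point $v\in S^V$ to $p$ copies of the \emph{same} point $v$, i.e.\ it factors through the diagonal $\Sp^1\subset\Sp^p$, and hence becomes null after passing to the quotient by $\Sp^1$; the phrase ``the single suspension arises from the cell structure on $S^{\sm p}/\Sigma_p$'' is exactly where the real work lives. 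Producing a nontrivial map (and the suspension coordinate) requires identifying the cofiber of the diagonal $S^V\rightarrow (S^V)^{\sm p}/\Sigma_p$ with $S^V\sm S^1\sm U(V\otimes\overline{\R^p})_+/\Sigma_p$ and recognizing $U(\infty\rho_G\otimes\overline{\R^p})=E_G\mathcal{F}$ --- this is precisely Proposition \ref{prop:geometricargument}, the heart of the paper's argument, which your outline presupposes rather than supplies. Relatedly, your final step --- that $\Phi^K\psi$ matches the two wedge decompositions summand-for-summand rather than, say, annihilating the index-$p$ summands --- is exactly the content of the proposition beyond the abstract computation, and you defer it (``the main obstacle \ldots I would address this by unwinding $\psi$''). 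Knowing that the two sides have abstractly equivalent geometric fixed points for every $K$ does not yield an equivalence of genuine $G$-spectra; one needs a specific map and a verification that it induces these equivalences.

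For contrast, the paper never constructs a map from the classifying space into the filtration quotient. It shows the quotient map $\Sp_{\Z/p}^p/\Sp^1\rightarrow\Sp_{\Z/p}^p/\Sp^{p-1}$ is a $p$-local equivalence (Proposition \ref{prop:earlierlayers}), identifies $\Sp_{\Z/p}^p(\Sigma^{\infty G}S^0)/\Sp^{p-1}(\Sigma^{\infty G}S^0)\simeq\Sigma^{\infty G}(S^1\sm E_G\mathcal{F}_+/\Sigma_p)$ by the cofiber-of-the-diagonal argument just described, and then compares $E_G\mathcal{F}/\Sigma_p$ with $B_G\Sigma_p$ and $B_G\Aff_1$ by honest maps of $G$-spaces, checked to be $p$-local equivalences on all fixed points via the graph-subgroup analysis (Lemma \ref{lemma:fixedpointsofaquotient}) and Adem--Milgram (Propositions \ref{prop:FandSigmap} and \ref{prop:affsigma}). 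In particular the equivariant statement is \emph{not} deduced by patching together the nonequivariant Mitchell--Priddy case on each fixed-point summand, which is what your outline attempts; if you want to salvage your route, you would need to write down $\psi$ concretely (which in practice reproduces Proposition \ref{prop:geometricargument}) and then carry out the deferred compatibility check with the decomposition of Proposition \ref{prop:modpstablefinitesymmetricpowers}.
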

\emph{Note:} The equivalence above is the $n=1$ case of Theorem \ref{thm:maintheorem}.
\begin{proof}
There is a cofiber sequence of functors
$$\xymatrix{\Sp^1(-)\ar[r]^{\Delta} & \Sp^p(-)\ar[r] & \Sp_{\Z/p}^p(-)}$$
where $\Delta$ is the diagonal map. Thus by Lemma \ref{lemma:finitesymmetricpowersstablygenuine}, there is a natural equivalence of genuine $G$-spectra
$$\Sp_{\Z/p}^p(\Sigma^{\infty G}X)/\Sp_{\Z/p}^1(\Sigma^{\infty G}X)\simeq X\sm \Sp_{\Z/p}^p(\Sigma^{\infty G}S^0)/\Sp_{\Z/p}^1(\Sigma^{\infty G}S^0).$$
Proposition \ref{prop:earlierlayers} implies that the quotient map is a $p$-local equivalence
$$\Sp_{\Z/p}^p(\Sigma^{\infty G}S^0)/\Sp^1(\Sigma^{\infty G}S^0)\simeq \Sp_{\Z/p}^p(\Sigma^{\infty G}S^0)/\Sp^{p-1}(\Sigma^{\infty G}S^0).$$
Let $\mathcal{F}$ denote the family of nontransitive subgroups of $\Sigma_p$. Proposition \ref{prop:geometricargument} says that
$$\Sp_{\Z/p}^p(\Sigma^{\infty G}S^0)/\Sp^{p-1}(\Sigma^{\infty G}S^0)\simeq \Sigma^{\infty G}(S^1\sm E_G\mathcal{F}_+/\Sigma_p).$$
Propositions \ref{prop:FandSigmap} and \ref{prop:affsigma} say that there are $p$-local equivalences of equivariant classifying spaces
$$E_G\mathcal{F}_+/\Sigma_p \simeq (B_G\Sigma_p)_+ \simeq (B_G\Aff_1)_+.$$
\end{proof}

\subsection{Equivariant Classifying Spaces}

The following definition of a $G$-equivariant classifying space is given in \cite{Sankar}.

\begin{definition}\label{definition:equivariantclassifyingspace}
Let $\Lambda$ be any finite group. Suppose that $\mathcal{F}$ is a collection of subgroups of $\Lambda$ with the property that if $\Gamma \in \mathcal{F}$, then every subgroup of $\Gamma$ and every group conjugate to $\Gamma$ is in $\mathcal{F}$. Then we define $E_G\mathcal{F}$ to be the $(G\times\Lambda)$-space with fixed points under any subgroup $\Gamma\subset (G\times \Lambda)$
$$(E_G\mathcal{F})^{\Gamma} \simeq \begin{cases}
* \hskip 0.1in \mathrm{if} \hskip 0.1in \Gamma\cap \Lambda \in \mathcal{F}\\
\emptyset \hskip 0.1in \mathrm{if} \hskip 0.1in \Gamma\cap\Lambda \notin \mathcal{F}
\end{cases}$$
When $\mathcal{F}$ contains only the trivial group, $E_G\mathcal{F}$ is denoted $E_G\Lambda$. This space has a free action of $\Lambda$. We call $B_G\Lambda=(E_G\Lambda)/\Lambda$ the $G$-\emph{equivariant classifying space} of $\Lambda$.
\end{definition}

\begin{example}\label{example:equivariantlensspace}
Let
\begin{itemize}
\item $\rho_G$ be the real regular representation of $G$,
\item $\overline{\R^n}$ be the reduced permutation representation of $\Sigma_n$,
\item $\mathcal{F}$ denote the family of nontransitive subgroups of $\Sigma_n$, and
\item for any representation $V$, let $U(V)$ denote the unit sphere of $V$.
\end{itemize}
Then Lemma \ref{lemma:representationfixedpoints} implies that $U(\infty\rho_G\otimes \overline{\R^n})=E_G\mathcal{F}$.
\end{example}

Specialize the above example to the case $n=p$.

\begin{proposition}\label{prop:geometricargument}
Let $\mathcal{F}$ denote the collection of nontransitive subgroups of $\Sigma_p$. The cofiber $\Sp_{\Z/p}^p(\Sigma^{\infty G}S^0)/\Sp_{\Z/p}^{p-1}(\Sigma^{\infty G}S^0)$ is equivalent to to $\Sigma^{\infty G}(S^1 \sm E_G\mathcal{F}_+/\Sigma_p)$.
\end{proposition}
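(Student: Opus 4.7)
The plan is to compute the cofiber level by level on the genuine $G$-spectrum side, then stabilize. First I would identify the cofiber $\Sp_{\Z/p}^p(Y)/\Sp^{p-1}(Y)$ abstractly for a pointed $G$-space $Y$: since $\Sp_{\Z/p}^p(Y)$ is the quotient of $\Sp^p(Y)$ by the identifications $py\sim 0$, and since the class of $y+y+\cdots+y\in\Sp^p(Y)$ descends to the image of the diagonal $\Delta(y)\in\Sp^p(Y)/\Sp^{p-1}(Y)\cong Y^{\sm p}/\Sigma_p$, the two quotients combine into
$$\Sp_{\Z/p}^p(Y)/\Sp^{p-1}(Y)\;\cong\;(Y^{\sm p}/\Sigma_p)\,/\,\Delta(Y).$$

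Next I would compute this at $Y=S^V$. The splitting $V^{\oplus p}\cong V\oplus(V\otimes\overline{\R^p})$ of $\Sigma_p$-representations (the diagonal summand plus its orthogonal complement) gives a $G\times\Sigma_p$-equivariant homeomorphism $(S^V)^{\sm p}\cong S^V\sm S^{V\otimes\overline{\R^p}}$ on which $\Sigma_p$ acts trivially on the first factor, and the diagonal $\Delta$ becomes the inclusion $x\mapsto(x,0)$, with $0\in V\otimes\overline{\R^p}$ the fixed origin. Taking $\Sigma_p$-orbits and collapsing the diagonal yields
$$S^V\,\sm\,\bigl((S^{V\otimes\overline{\R^p}}/\{0\})/\Sigma_p\bigr).$$
The quotient $S^{V\otimes\overline{\R^p}}/\{0\}$ identifies the two fixed points $0$ and $\infty$, so it is the one-point compactification of $(V\otimes\overline{\R^p})-\{0\}$; an equivariant radial deformation retraction identifies it with $S^1\sm S(V\otimes\overline{\R^p})_+$. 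Thus the $V$-th space of the cofiber is $S^V\sm S^1\sm(S(V\otimes\overline{\R^p})_+/\Sigma_p)$.

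Finally I would stabilize. As $V$ ranges through representations containing arbitrarily many copies of $\rho_G$, the inclusion $S(V\otimes\overline{\R^p})\hookrightarrow S(\infty\rho_G\otimes\overline{\R^p})=E_G\mathcal{F}$ (Example \ref{example:equivariantlensspace}) is arbitrarily highly connected on each $\Gamma$-fixed-point space, governed by Lemma \ref{lemma:representationfixedpoints}: the fixed points are contractible when $\Gamma\cap\Sigma_p$ is nontransitive and empty otherwise. Hence the induced map of genuine $G$-spectra is an equivalence with target $\Sigma^{\infty G}(S^1\sm E_G\mathcal{F}_+/\Sigma_p)$. The main obstacle is the careful bookkeeping of the $G\times\Sigma_p$-equivariance throughout --- in particular the splitting of the diagonal as a $\Sigma_p$-equivariant summand --- and confirming that the final passage to the limit yields a genuine $G$-equivalence rather than a merely naive one by matching the fixed-point pattern demanded by Definition \ref{definition:equivariantclassifyingspace} for every $\Gamma\subseteq G\times\Sigma_p$.
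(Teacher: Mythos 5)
Your proposal is correct and follows essentially the same route as the paper: identify the layer as the cofiber of the diagonal $S^V\to (S^V)^{\sm p}/\Sigma_p$, split off the diagonal summand $V^{\oplus p}\cong V\oplus(V\otimes\overline{\R^p})$ to get $S^V\sm S^1\sm U(V\otimes\overline{\R^p})_+/\Sigma_p$, and then identify $U(\infty\rho_G\otimes\overline{\R^p})$ with $E_G\mathcal{F}$ via the fixed-point criterion of Lemma \ref{lemma:representationfixedpoints}. Your extra care about the passage to the colimit being a genuine (not merely na{\"i}ve) equivalence is a point the paper leaves implicit, but it is the same argument.
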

\begin{proof}
Let $X=S^V$, for any $G$-representation $V$. Then
$$\begin{aligned}
\Sp_{\Z/p}^p(S^V)/\Sp_{\Z/p}^{p-1}(S^V) &\simeq \cof(\xymatrix{S^V\ar[r]^{\Delta\hskip 0.2in} & (S^V)^{\sm p}/\Sigma_p})\\
&\simeq \cof(\xymatrix{S^V\ar[r]^{\Delta\hskip 0.2in} & (S^V)^{\sm p}})/\Sigma_p\\
&\simeq S^V\sm \cof(\xymatrix{S^0\ar[r]& S^{V\otimes\overline{\R^p}}})/\Sigma_p\\
&\simeq S^V\sm S^1\sm U(V\otimes \overline{\R^p})_+/\Sigma_p.
\end{aligned}$$
It therefore suffices for us to prove that $U(\infty(\rho_G\otimes \overline{\R^p}))\simeq E_G\mathcal{F}$. It is an easy consequence of Lemma \ref{lemma:representationfixedpoints} that for any subgroup $\Gamma\subset G\times \Sigma_p$,
$$U(\infty(\rho_G\otimes \overline{\R^p}))=\begin{cases}U(0) = \emptyset \hskip 0.15in \mathrm{if} \hskip 0.15in \Gamma\cap \Sigma_p \hskip 0.15in \mathrm{transitive}\\
U(\R^{\infty})\simeq \star \hskip 0.2in \mathrm{otherwise}
\end{cases}$$
which completes the proof.
\end{proof}

\subsection{$p$-local equivalences and Symmetric powers}
It is well-known that the homotopy category of spectra carries a \emph{$p$-localization} endofunctor $L_p(-)$, and for every spectra $\mathbf{X}$, there is a map $\mathbf{X} \rightarrow L_p\mathbf{X}$. For our purposes, here is the definition of the $p$-localization we need.
\begin{definition}\label{definition:plocal}
A spectrum $\mathbf{X}$ is \emph{$p$-locally contractible} if $\tilde{H}_*(\mathbf{X};\Z_{(p)})\cong 0$. A genuine $G$-spectrum $\mathbf{X}$ is $p$-locally contractible if $\Phi^H\mathbf{X}$ is $p$-locally contractible for every subgroup $H\subseteq G$. A map $f:\mathbf{X} \rightarrow \mathbf{Y}$ is a \emph{$p$-local equivalence} if the cofiber is $p$-locally contractible.

Note that for CW complexes with finitely many cells in each dimension, a $\Z_{(p)}$-homology isomorphism is the same as an $\F_p$-homology isomorphism.
\end{definition}

\begin{proposition}\label{prop:earlierlayers}
Let $G$ be a $p$-group. The $n$-th layer $\Sp^n(\Sigma^{\infty G}S^0)/\Sp^{n-1}(\Sigma^{\infty G}S^0)$ in the symmetric powers of $\Sigma^{\infty G}S^0$ is $p$-locally contractible for $n=2, 3, \ldots, p-1$. 
\end{proposition}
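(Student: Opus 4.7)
The plan is to prove $p$-local contractibility of the layer
\[
L_n := \Sp^n(\Sigma^{\infty G}S^0)/\Sp^{n-1}(\Sigma^{\infty G}S^0)
\]
by checking, as required by Definition~\ref{definition:plocal}, that $\Phi^H(L_n)$ is $p$-locally contractible for every subgroup $H\subseteq G$. Since $2\le n\le p-1$, the remark following Definition~\ref{definition:modpsymmetricpowers} gives $\Sp^n = \Sp^n_{\Z/p}$ and $\Sp^{n-1}=\Sp^{n-1}_{\Z/p}$, so Proposition~\ref{prop:modpstablefinitesymmetricpowers} (applied to the $p$-group $H$ with input $S^0$) decomposes $\Phi^H(\Sp^n(\Sigma^{\infty G}S^0))$ as a wedge of summands $\Sp^{\lfloor n/c_i\rfloor}_{\Z/p}(\Sigma^\infty S^0)\wedge \Pri^{H/H_i}(S^{\infty\rreg_H})$ indexed by a subgroup ordering $H_0,\ldots,H_m$ of $H$, with $c_i = [H:H_i]$.

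The key simplification is that each $c_i$ is a power of $p$, so for $i\ge 1$ (where $H_i\subsetneq H$) we have $c_i\ge p > n$, forcing $\lfloor n/c_i\rfloor = 0$ and hence a contractible summand. Only the $i=0$ term survives---namely $\Sp^n(\Sigma^\infty S^0)\wedge \Pri^{H/H}(S^{\infty\rreg_H}) = \Sp^n(\Sigma^\infty S^0)$, using that $\rreg_H$ has trivial $H$-fixed points. Since $\Phi^H$ commutes with cofibers, running the same argument for $n-1$ in place of $n$ yields
\[
\Phi^H(L_n)\;\simeq\; \Sp^n(\Sigma^\infty S^0)/\Sp^{n-1}(\Sigma^\infty S^0),
\]
reducing everything to a purely nonequivariant assertion about the ordinary symmetric power filtration of the sphere spectrum.

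To close the proof I invoke the classical theorem of Nakaoka (\cite{Na}): the mod $p$ homology of $\Sp^n(\Sigma^\infty S^0)/\Sp^{n-1}(\Sigma^\infty S^0)$ vanishes whenever $n$ is not a power of $p$, and every integer in $\{2,3,\ldots,p-1\}$ fails to be a $p$-power. Since the layer is connective, mod $p$ acyclicity upgrades to $p$-local contractibility and the proposition follows. The substantive input is thus Nakaoka's nonequivariant computation; the equivariant reduction is routine bookkeeping once Proposition~\ref{prop:modpstablefinitesymmetricpowers} is granted. As an alternative to the citation, since $|\Sigma_n| = n!$ is a unit in $\Z_{(p)}$ for $n<p$, one can establish the vanishing directly by a transfer/averaging argument on $(S^\ell)^{\wedge n}/\Sigma_n$, at comparable cost.
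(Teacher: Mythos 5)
Your argument is correct, but it follows a genuinely different route from the paper's. The paper does not invoke Proposition~\ref{prop:modpstablefinitesymmetricpowers}; instead it computes the geometric fixed points of the layer directly. Writing the layer as $(S^{\ell\rho_G})^{\wedge n}/\Sigma_n$ at level $\ell$, it observes that since $G$ is a $p$-group and $n<p$, there are no nontrivial homomorphisms $G\to\Sigma_n$, so $\bigl((S^{\ell\rho_G})^{\wedge n}/\Sigma_n\bigr)^G = (S^{\ell})^{\wedge n}/\Sigma_n$, and then closes with a transfer argument: since $p\nmid n!$, the reduced $\F_p$-homology of the orbit space is the $\Sigma_n$-coinvariants of $\tilde H_*(S^\ell\wedge S^{\ell\,\overline{\R^n}};\F_p)$, which is concentrated in degree $\ell n \ge 2\ell$, above the stable range. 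Your proof instead leans on the wedge decomposition of Proposition~\ref{prop:modpstablefinitesymmetricpowers} (valid here because $\Sp^n=\Sp^n_{\Z/p}$ for $n\le p-1$), kills all summands with $c_i\ge p$ via $\lfloor n/c_i\rfloor=0$, and is left with the nonequivariant layer, which you then dispatch by citing Nakaoka. The two routes reach the same reduction --- geometric fixed points of the equivariant layer equals the nonequivariant layer --- but the paper's computation is self-contained and elementary where yours outsources both the equivariant reduction (to the heavier wedge-decomposition proposition) and the nonequivariant vanishing (to Nakaoka). Your alternative closing via $n!\in\Z_{(p)}^\times$ and averaging on $(S^\ell)^{\wedge n}/\Sigma_n$ is in fact exactly what the paper does, so with that substitution your argument differs only in the equivariant step. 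One cosmetic point: you could skip the per-subgroup bookkeeping, since $\Sp^n(\Sigma^{\infty G}(-))$ commutes with restriction and the inductive hypothesis handles proper subgroups automatically, leaving only $\Phi^G$ to check; this is how the paper phrases its induction.
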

\begin{proof}
Pick some positive integer $\ell$, and consider the $n$-th layer $\Sp^n(S^{\ell\rho_G})/\Sp^{n-1}(S^{\ell\rho_G}) \simeq (S^{\ell\rho_G})^{\sm n}/\Sigma_n$. By induction on the group $G$, it is sufficient to prove that
$$\tilde{H}_*(((S^{\ell\rho_G})^{\sm n}/\Sigma_n)^G;\F_p)=0, \hskip 0.2in *\le 2n.$$
Since $n<p$, there are no nontrivial homomorphisms $G\rightarrow \Sigma_n$, and therefore
$$((S^{\ell\rho_G})^{\sm n}/\Sigma_n)^G = ((S^{\ell\rho_G})^{\sm n})^G/\Sigma_n = (S^{\ell})^{\sm n}/\Sigma_n$$
As in the notation of Lemma \ref{lemma:representationfixedpoints}, let $\overline{\R^n}$ denote the reduced standard representation of $\Sigma_n$. Then
$$(S^{\ell})^{\sm n}/\Sigma_n\simeq S^{\ell}\sm S^{(\overline{\R^n})^{\ell}}/\Sigma_n$$
Since $p$ is relatively prime to the order of $\Sigma_n$,
$$\tilde{H}_*(S^{\ell}\sm S^{(\overline{\R^n})^{\ell}}/\Sigma_n;\F_p)\cong  \tilde{H}_*(S^{\ell}\sm S^{(\overline{\R^n})^{\ell}};\F_p)_{\Sigma_n}$$
which is concentrated in degree $\ell n$ and higher. Since $n\ge 2$, the result follows.

\end{proof}

\subsection{Nontransitive subgroups of $\Sigma_p$}
Example \ref{example:equivariantlensspace} says that the $(G\times \Sigma_p)$-space $E_G\mathcal{F}$ is the space of ordered configurations of $p$ (not necessarily distinct) points in $\infty\rho_G$ whose sum is zero and total length is $1$. The subspace consisting of configurations of \emph{distinct} points carries a free $\Sigma_p$ action and is the $(G\times\Sigma_p)$-space $E_G\Sigma_p$.

In this section, we prove that if $G$ is a $p$-group, then the inclusion $\Sigma^{\infty G}(B_G\Sigma_p)_+\rightarrow \Sigma^{\infty G}(E_G\mathcal{F}_+/\Sigma_p)$ is a $p$-local equivalence (Proposition \ref{prop:FandSigmap}). The proof relies on Lemma \ref{lemma:fixedpointsofaquotient}, which establishes a formula for the $G$-fixed points of a quotient space.

We must first establish two simple lemmas which are used to prove Lemma \ref{lemma:fixedpointsofaquotient}.

\begin{lemma}\label{lem:normalize1}
Let $\sigma\in\Sigma_p$ be a $p$-cycle, and let $\Phi\subset\Sigma_p$ be a nontrivial group normalized by $\sigma$. Then $\Phi$ acts transitively on $\{1, \ldots, p\}$.
\end{lemma}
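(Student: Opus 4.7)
The plan is to analyze how $\sigma$ acts on the set of orbits of $\Phi$ on $\{1,\ldots,p\}$, and to exploit both the fact that $\sigma$ is a single $p$-cycle (hence transitive on $\{1,\ldots,p\}$) and the fact that $\sigma$ has prime order.

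First I would record the elementary observation that whenever $\sigma$ normalizes a subgroup $\Phi \subseteq \Sigma_p$, conjugation by $\sigma$ carries $\Phi$-orbits to $\Phi$-orbits: if $O \subseteq \{1,\ldots,p\}$ is a $\Phi$-orbit, then $\sigma(O)$ is also a $\Phi$-orbit, because for $\phi \in \Phi$ one has $\phi \cdot \sigma(O) = \sigma(\sigma^{-1}\phi\sigma \cdot O) = \sigma(O)$ as $\sigma^{-1}\phi\sigma \in \Phi$. Thus $\sigma$ induces a permutation on the finite set of $\Phi$-orbits. Let $k$ denote the number of $\Phi$-orbits, so $k \le p$ (with equality iff $\Phi$ is trivial).

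Next I would split on the action of $\sigma$ on the set of $\Phi$-orbits. Since $\sigma$ has order $p$ and $p$ is prime, each $\sigma$-orbit on the set of $\Phi$-orbits has cardinality $1$ or $p$. I claim there must be a fixed $\Phi$-orbit. If not, then $p \mid k$, and combined with $k \le p$ this forces $k = p$, meaning every $\Phi$-orbit is a singleton; but then $\Phi$ acts trivially on $\{1,\ldots,p\}$, contradicting the hypothesis that $\Phi$ is nontrivial.

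Finally I would use transitivity of $\sigma$ to finish. Let $O$ be a $\Phi$-orbit fixed (setwise) by $\sigma$. Then $O$ is a nonempty $\sigma$-invariant subset of $\{1,\ldots,p\}$, and since $\sigma$ acts transitively (it is a $p$-cycle), we conclude $O = \{1,\ldots,p\}$. So $\Phi$ has a single orbit, i.e.\ $\Phi$ acts transitively, as required.

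There is essentially no hard step; the only pitfall is being careful about why the alternative ``$\sigma$ permutes the $\Phi$-orbits in cycles of length $p$'' leads to a contradiction, which uses both $k \le p$ and the nontriviality of $\Phi$. This lemma will presumably be combined in the next lemma with the fact that the only transitive subgroups of $\Sigma_p$ containing a $p$-cycle which are normalized by that $p$-cycle live inside the affine group $\Aff_1 = (\Z/p) \rtimes \GL_1(\F_p)$, explaining the appearance of $\Aff_1$ in Proposition~\ref{prop:affsigma}.
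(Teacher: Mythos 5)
Your proof is correct, but it argues differently from the paper. The paper's proof is a bare-hands computation: after normalizing $\sigma$ to be $1\mapsto 2\mapsto\cdots\mapsto p\mapsto 1$, it picks a nontrivial $\pi\in\Phi$ with $\pi(p)=j$ and observes that the conjugates $\sigma^{ij}\pi\sigma^{-ij}$, which lie in $\Phi$ by normality, send $ij\mapsto (i+1)j$ modulo $p$; since $j\not\equiv 0$ and $p$ is prime, $j$ generates $\Z/p$, so these elements chain all $p$ points into one $\Phi$-orbit. You instead run the standard block-system argument: $\sigma$ permutes the $\Phi$-orbits, an order-$p$ permutation of at most $p$ orbits must fix one unless there are exactly $p$ singleton orbits (which would force $\Phi=1$), and a $\sigma$-invariant nonempty subset must be all of $\{1,\ldots,p\}$ because $\sigma$ is a $p$-cycle. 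Both uses of primality are the same fact in different clothing (orbit sizes of $\langle\sigma\rangle$ divide $p$ versus $j$ generating $\Z/p$). Your route is more conceptual, avoids the coordinate bookkeeping (and the index typo) in the paper's computation, and is really the general statement that the orbits of a subgroup normalized by a transitive group form blocks of equal size; the paper's route has the minor virtue of explicitly exhibiting elements of $\Phi$ realizing transitivity, which is in the spirit of how Lemma \ref{lem:normalize2} is then applied. One small caveat: your closing speculation about $\Aff_1$ is not quite how the paper proceeds --- $\Aff_1$ enters through the comparison of classifying spaces in Proposition \ref{prop:affsigma} (via $B\Aff_1\to B\Sigma_p$ being a mod $p$ homology equivalence), not through a classification of transitive subgroups normalized by a $p$-cycle --- but this does not affect the validity of your proof of the lemma itself.
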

\begin{proof}
Without loss of generality, let $\sigma$ be the permutation sending $1 \mapsto 2 \mapsto 3 \mapsto \cdots \mapsto p \mapsto 1$. Suppose that $\Psi$ has some nontrivial permutation $\pi$ sending $p \mapsto j$. Then $\sigma^{ij}j \pi \sigma^{-ij}$ sends $ij \mapsto (i+1)j$, for $i=1, 2, \ldots, p-1$. Since $\Psi$ is normalized by $\sigma$, these permutations $\sigma^{ij} \pi \sigma^{-ij}$ all lie in $\Psi$, and so $\Psi$ is transitive.
\end{proof}
\begin{lemma}\label{lem:normalize2}
Let $G$ be a $p$-group. If $f, f':G \rightarrow \Sigma_p$ are two distinct maps, then the group $\Gamma\subset G\times \Sigma_p$ generated by $\Gamma_f$ and $\Gamma_{f'}$ intersects $\Sigma_p$ transitively.
\end{lemma}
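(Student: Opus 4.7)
The plan is to identify a concrete non-trivial subgroup of $\Gamma\cap(\{1\}\times\Sigma_p)$, then use Lemma \ref{lem:normalize1} to force it to be transitive. Writing $\Gamma_f=\{(g,f(g))\}$ and similarly $\Gamma_{f'}$, I compute
\[
(g,f(g))\cdot (g,f'(g))^{-1} \;=\; (1,\, f(g)f'(g)^{-1})\;\in\;\Gamma,
\]
so the subgroup
\[
\Psi\;:=\;\langle\, f(g)f'(g)^{-1}\,:\, g\in G\,\rangle\;\subseteq\;\Sigma_p
\]
satisfies $\{1\}\times \Psi\subseteq \Gamma\cap (\{1\}\times\Sigma_p)$. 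Since $f\ne f'$, there is some $g$ with $f(g)\ne f'(g)$, so $\Psi$ is non-trivial. It therefore suffices to prove $\Psi$ acts transitively on $\{1,\ldots,p\}$.

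Next I check that the image $f(G)$ normalizes $\Psi$. This follows from the observation that $\Gamma\cap(\{1\}\times\Sigma_p)$ is normal in $\Gamma$, so conjugating $(1,\pi)\in\Gamma$ by $(g,f(g))\in\Gamma_f$ produces $(1,f(g)\pi f(g)^{-1})\in\Gamma$; taking $\pi$ to be a generator $f(g_0)f'(g_0)^{-1}$ of $\Psi$ shows $f(G)$ normalizes $\Psi$ inside $\Sigma_p$. The analogous argument with $\Gamma_{f'}$ shows $f'(G)$ also normalizes $\Psi$.

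Finally I split into cases according to whether $f$ is trivial. Since $G$ is a $p$-group, $f(G)$ is a $p$-subgroup of $\Sigma_p$, hence either trivial or cyclic of order $p$ generated by a $p$-cycle $\sigma$ (because the Sylow $p$-subgroups of $\Sigma_p$ have order $p$). If $f$ is nontrivial, pick such a $\sigma\in f(G)$ normalizing $\Psi$, and Lemma \ref{lem:normalize1} immediately gives that $\Psi$ is transitive. If $f$ is trivial, then $f'\ne f$ is nontrivial, and the generators of $\Psi$ reduce to $f'(g)^{-1}$, so $\Psi=f'(G)$ is itself a nontrivial $p$-subgroup of $\Sigma_p$, hence cyclic generated by a $p$-cycle, hence transitive. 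Either way, $\Psi$ — and therefore $\Gamma\cap(\{1\}\times\Sigma_p)$ — acts transitively on $\{1,\ldots,p\}$.

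The only real subtlety is checking that $f(G)$ normalizes $\Psi$ — this needs the normality of $N=\Gamma\cap(\{1\}\times\Sigma_p)$ in $\Gamma$, rather than a direct computation with the generators $f(g)f'(g)^{-1}$. Once this is in hand, the $p$-group hypothesis on $G$ does all the work by collapsing $f(G)$ either to a $p$-cycle (ready for Lemma \ref{lem:normalize1}) or to the trivial group (making $\Psi$ itself a Sylow $p$-subgroup of $\Sigma_p$).
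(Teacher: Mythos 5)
Your overall strategy is the paper's: produce a nontrivial subgroup of $\Gamma\cap(\{1\}\times\Sigma_p)$ out of the elements $(1,f(g)f'(g)^{-1})$, note that every nontrivial element of $f(G)$ is a $p$-cycle because $G$ is a $p$-group and the Sylow $p$-subgroups of $\Sigma_p$ have order $p$, and then invoke Lemma \ref{lem:normalize1}. The one step that does not hold up as written is the normalization claim. Normality of $N:=\Gamma\cap(\{1\}\times\Sigma_p)$ in $\Gamma$ tells you that conjugating a generator $f(g_0)f'(g_0)^{-1}$ of your $\Psi$ by $f(g)$ lands in $N$; it does not put the conjugate back inside $\Psi$, which is a priori a proper subgroup of $N$. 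So the sentence ``taking $\pi$ to be a generator \dots shows $f(G)$ normalizes $\Psi$'' only proves that $f(G)$ normalizes $N$, and Lemma \ref{lem:normalize1} cannot yet be applied to $\Psi$. Your closing remark in fact has the situation backwards: the normality of $N$ is not sufficient for the claim about $\Psi$, whereas a direct computation with the generators does work, since for homomorphisms
$$f(g)\bigl(f(g_0)f'(g_0)^{-1}\bigr)f(g)^{-1}=\bigl(f(gg_0)f'(gg_0)^{-1}\bigr)\cdot\bigl(f(g)f'(g)^{-1}\bigr)^{-1}\in\Psi.$$

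The simplest repair, and the one the paper uses, is to drop $\Psi$ and run the argument on $N$ itself: $N$ is nontrivial because it contains some $f(g_0)f'(g_0)^{-1}\neq 1$; $N$ is normalized by every $f(g)$, since conjugation by $(g,f(g))\in\Gamma$ preserves both $\Gamma$ and $\{1\}\times\Sigma_p$; and if $f$ is nontrivial, any $f(g)\neq 1$ is a $p$-cycle, so Lemma \ref{lem:normalize1} makes $N$ transitive --- which is exactly the conclusion of the lemma, so nothing about $\Psi$ is needed. Your trivial-$f$ case is fine as written (there $\Psi=f'(G)$ is generated by a $p$-cycle and no normalization is used). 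With either repair the argument is complete; indeed your case split is marginally cleaner than the paper's proof, which tacitly assumes (after possibly exchanging $f$ and $f'$) that a single $g$ can be chosen with both $f(g)\neq f'(g)$ and $f(g)\neq 1$, whereas you only need some $g_0$ with $f(g_0)\neq f'(g_0)$ and, separately, some $g$ with $f(g)\neq 1$.
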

\begin{proof}
Pick some $g \in G$ such that $f(g) \neq f'(g)$, and moreover such that $f(g)$ is not the identity permutation. In particular, $f(g)$ must be a $p$-cycle because $G$ is a $p$-group. Then $\Gamma$ contains $(g^{-1}, f(g))(g, f'(g))=f(g)^{-1}f'(g) \neq \text{id}$. Hence, $\Gamma$ intersects $\Sigma_p$ nontrivially. Let $\Psi=\Gamma\cap \Sigma_p$: then $\Phi$ is normalized by $f(g)$ because
$$(g, f(g))\Gamma (g^{-1}, f(g)^{-1})=\Gamma \implies f(g)\Psi f(g)^{-1}=\Psi$$
Now using the fact that $f(g)$ is a $p$-cycle and Lemma \ref{lem:normalize1}, it follows that $\Phi$ is transitive.
\end{proof}

\begin{definition}(\cite{Sankar}, Definition 14)\label{definition:graphsubgroup}
Let $G$ and $\Lambda$ be any finite groups, and let $H\subseteq G$ be a subgroup. For any homomorphism $f:H \rightarrow \Lambda$, its \emph{graph} is the subgroup of $H\times \Lambda$
$$\Gamma_f:=\{(h,f(h))\; :\; h\in H\}.$$

\end{definition}

\begin{lemma}\label{lemma:fixedpointsofaquotient}
Let $G$ be a $p$-group, and let $X$ be a $(G\times \Sigma_p)$-space. For each $x\in X$, let $S_x$ denote the isotropy group of $x$. Suppose that for every point $x\in X$, the intersection $S_x\cap (1\times \Sigma_p)$ is nontransitive. Then there is a decomposition
\begin{equation}\label{equation:fixedpointsofaquotient}
(X/\Sigma_p)^G = (\coprod\limits_{f:G \rightarrow \Sigma_p}X^{\Gamma_f})/\Sigma_p
\end{equation}
where $f$ varies over all homomorphisms from $G$ to $\Sigma_p$, and $\Gamma_f\subset (G\times \Sigma_p)$ denotes the graph of $f$. Here, a permutation $\sigma$ in $\Sigma_p$ takes a point of $X^{\Gamma_f}$ to a point of $X^{\Gamma_{\sigma f\sigma^{-1}}}$.
\end{lemma}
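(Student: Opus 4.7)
The plan is to construct a natural map $\Phi\colon (\coprod_{f} X^{\Gamma_f})/\Sigma_p \rightarrow (X/\Sigma_p)^G$ sending the class of $(f,x)$ to $[x]$, and verify it is a homeomorphism. Well-definedness requires two checks: if $\Gamma_f\subseteq S_x$ then $g\cdot x=f(g)^{-1}\cdot x$ lies in $\Sigma_p\cdot x$, so $[x]$ is $G$-fixed; and the $\Sigma_p$-action $\sigma\cdot(f,x)=(\sigma f\sigma^{-1},\sigma x)$ descends through $\Phi$. I will also note that the subspaces $X^{\Gamma_f}$ are pairwise disjoint---a common fixed point for two distinct graphs would, via Lemma \ref{lem:normalize2}, force $S_x\cap\Sigma_p$ to be transitive and contradict the hypothesis---so the disjoint union embeds as a closed subspace of $X$, and $\Phi$ becomes a continuous bijection between quotients.

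Injectivity is short: if $\Gamma_f\subseteq S_x$ and $\Gamma_{f'}\subseteq S_{x'}$ with $x'=\sigma x$, replacing $(f,x)$ by the equivalent $(\sigma f\sigma^{-1},\sigma x)$ reduces to $x'=x$, and then $f\neq f'$ would make $\langle\Gamma_f,\Gamma_{f'}\rangle\subseteq S_x$ intersect $\Sigma_p$ transitively by Lemma \ref{lem:normalize2}, contradicting the nontransitivity hypothesis.

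Surjectivity is the main obstacle. Given a $G$-fixed $[x]$, for each $g\in G$ I choose $\tau_g\in\Sigma_p$ with $g\cdot x=\tau_g\cdot x$, and set $K:=S_x\cap\Sigma_p$. Since the $G$- and $\Sigma_p$-actions commute, $\tau_g K\tau_g^{-1}=S_{g\cdot x}\cap\Sigma_p=K$, so $\tau_g$ normalizes $K$ and the coset $\tau_g^{-1}K$ depends only on $g$; the assignment $\bar f(g):=\tau_g^{-1}K$ is then a group homomorphism $\bar f\colon G\rightarrow N_{\Sigma_p}(K)/K$. Any lift $f\colon G\rightarrow N_{\Sigma_p}(K)$ of $\bar f$ satisfies $f(g)\tau_g\in K$, so $(g,f(g))\cdot x=f(g)\tau_g\cdot x=x$, giving $\Gamma_f\subseteq S_x$ and hence a preimage of $[x]$.

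The crucial algebraic ingredient is that $K$ is nontransitive in $\Sigma_p$, which by Cauchy's theorem forces $p\nmid|K|$: the only elements of $\Sigma_p$ of order $p$ are $p$-cycles, and these act transitively on $\{1,\ldots,p\}$. Pulling the extension $1\to K\to N_{\Sigma_p}(K)\to N_{\Sigma_p}(K)/K\to 1$ back along $\bar f$ yields an extension of the $p$-group $G$ by the coprime-order group $K$, which splits by Schur--Zassenhaus. The resulting splitting provides the required lift $f$ and completes the proof.
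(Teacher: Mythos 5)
Your proposal is correct, and its surjectivity step takes a genuinely different route from the paper's. The paper argues directly with the isotropy group: if $[x]$ is $G$-fixed then $S_x$ surjects onto $G$, so with $\Psi=S_x\cap(1\times\Sigma_p)$ one can view $S_x$ as the graph of a homomorphism $G\rightarrow N_{\Sigma_p}(\Psi)/\Psi$; when $\Psi\neq 1$, Lemma \ref{lem:normalize1} shows no $p$-cycle normalizes $\Psi$, so $p\nmid |N_{\Sigma_p}(\Psi)/\Psi|$, the homomorphism is forced to be trivial, and $S_x=G\times\Psi$ contains the graph of the trivial homomorphism (the case $\Psi=1$ being immediate). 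You instead build the induced homomorphism $\bar f\colon G\rightarrow N_{\Sigma_p}(K)/K$ from the cosets $\tau_g^{-1}K$ and produce a lift to $N_{\Sigma_p}(K)$ by Schur--Zassenhaus, using only that nontransitivity of $K\subset\Sigma_p$ forces $p\nmid|K|$ (Cauchy). Both arguments exploit the same special feature of degree-$p$ symmetric groups --- transitivity is detected by $p$-torsion --- and both use Lemma \ref{lem:normalize2} identically for disjointness; your version handles $K$ trivial and nontrivial uniformly and avoids invoking Lemma \ref{lem:normalize1} in this step, at the cost of heavier machinery (note that since $p\nmid|N_{\Sigma_p}(K)|$ for $K\neq 1$, your lift is automatically the trivial homomorphism, so Schur--Zassenhaus is slightly more than is needed), while the paper's more elementary argument additionally pins down $S_x=G\times\Psi$ exactly. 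Your explicit construction of the comparison map $\Phi$ and the remark on the quotient topologies are more careful than the paper, which states the decomposition at the level of the set-theoretic identification; these verifications are routine and correct as you sketch them.
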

\begin{proof}
Let $x$ be a point in the $(G\times \Sigma_p)$-space $X$, and let us suppose that the image of $x$ in $X/\Sigma_p$ is $G$-fixed. This occurs if and only if the projection of $S_x$ onto $G$ is surjective.

We will show that $S_x$ contains some graph subgroup. Let us denote $S_x\cap (1\times \Sigma_p)$ by $\Psi$. Then the group $S_x$ may be thought of as the graph of a homomorphism $G \rightarrow N_G(\Psi)/\Psi$, where $N_G(\Psi)$ is the normalizer of $\Psi$ in $G$.

By assumption, $\Psi$ is nontransitive. If $\Psi=1$, then $S_x$ automatically contains a graph subgroup. If $\Psi$ is nontrivial, then by Lemma \ref{lem:normalize1} the normalizer $N_G(\Psi)$ contains no $p$-cycles. Therefore, the order of the group $N_G(\Psi)/\Psi$ is not divisible by $p$. So there are no nontrivial homomorphisms from $G$ to $N_G(\Psi)/\Psi$. It follows that $S_x = G\times \Psi$, and therefore $S_x$ contains a graph subgroup. In either case, $S_x$ contains some graph subgroup $\Gamma_f$, and thus $x\in X^{\Gamma_f}$. It follows that
$$(X/\Sigma_p)^G = (\bigcup\limits_{f:G \rightarrow \Sigma_p}X^{\Gamma_f})/\Sigma_p.$$

All that remains is to show that if $f$ and $f'$ are two distinct homomorphisms from $G$ to $\Sigma_p$, then $X^{\Gamma_f}$ and $X^{\Gamma_{f'}}$ are disjoint. By the assumption on $X$, it suffices to show that the subgroup of $G\times \Sigma_p$ which is generated by $\Gamma_f$ and $\Gamma_{f'}$ intersects $1\times \Sigma_p$ transitively. This is Lemma \ref{lem:normalize2}.
\end{proof}

\begin{proposition}\label{prop:FandSigmap}
The inclusion of $G$-spaces $(E_G\Sigma_p)/\Sigma_p \rightarrow (E_G\mathcal{F})/\Sigma_p$ is a $p$-local equivalence on all fixed point spaces.
\end{proposition}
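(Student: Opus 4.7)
The plan is as follows. Since $E_G\Sigma_p$ and $E_G\mathcal{F}$, viewed as $(H \times \Sigma_p)$-spaces for any subgroup $H \subseteq G$, coincide with $E_H\Sigma_p$ and $E_H\mathcal{F}$ respectively (by the characterization in Definition \ref{definition:equivariantclassifyingspace}), it suffices to prove the statement on $G$-fixed points for an arbitrary $p$-group $G$. Both spaces satisfy the hypothesis of Lemma \ref{lemma:fixedpointsofaquotient}: the $\Sigma_p$-isotropy at every point of $E_G\Sigma_p$ is trivial, and at every point of $E_G\mathcal{F}$ lies in $\mathcal{F}$, so in either case it is nontransitive. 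Applying the lemma and grouping by $\Sigma_p$-conjugacy classes yields
$$(E_G\Sigma_p/\Sigma_p)^G \;\simeq\; \coprod_{[f]} (E_G\Sigma_p)^{\Gamma_f}/C(f), \qquad (E_G\mathcal{F}/\Sigma_p)^G \;\simeq\; \coprod_{[f]} (E_G\mathcal{F})^{\Gamma_f}/C(f),$$
where $[f]$ runs over $\Sigma_p$-conjugacy classes of homomorphisms $f:G\to \Sigma_p$ and $C(f)\subseteq \Sigma_p$ is the centralizer of $f(G)$. It thus suffices to compare the two sides termwise.

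The next step is to identify each term. For any $\sigma\in C(f)$, the subgroup $\langle \Gamma_f,(1,\sigma)\rangle$ of $G\times\Sigma_p$ meets $\Sigma_p$ exactly in $\langle\sigma\rangle$. Using the universal property of $E_G\Sigma_p$ this shows that $(E_G\Sigma_p)^{\Gamma_f}$ is contractible and carries a free $C(f)$-action, so its quotient is $BC(f)$. Analogously, the $\Psi$-fixed points of the $C(f)$-space $(E_G\mathcal{F})^{\Gamma_f}$ are contractible precisely when $\Psi$ is nontransitive on $\{1,\ldots,p\}$, identifying $(E_G\mathcal{F})^{\Gamma_f}$ as a model for $E(C(f),\mathcal{F}_{nt})$ where $\mathcal{F}_{nt}$ is the family of nontransitive subgroups of $C(f)$. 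The desired claim becomes: the natural map $BC(f)\to B(C(f),\mathcal{F}_{nt})$ is a $p$-local equivalence for every $[f]$.

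A case analysis on $f$ then isolates the only hard case. If $f$ is nontrivial, then since $G$ is a $p$-group the image $f(G)$ contains an element of order $p$, which must be a $p$-cycle; the centralizer of a $p$-cycle in $\Sigma_p$ is the cyclic group it generates, so $C(f)=\mathbb{Z}/p$. The only nontransitive subgroup of $\mathbb{Z}/p$ is the trivial subgroup, so $\mathcal{F}_{nt}=\{1\}$ and both sides equal $B\mathbb{Z}/p$. Thus the essential case is $f=1$, where $C(f)=\Sigma_p$ and $\mathcal{F}_{nt}$ coincides with the family $\mathcal{F}_{p'}$ of subgroups of $\Sigma_p$ of order coprime to $p$ (a subgroup of $\Sigma_p$ is nontransitive if and only if it contains no $p$-cycle, equivalently has order prime to $p$).

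The main obstacle is therefore to show that $B\Sigma_p\to B(\Sigma_p,\mathcal{F}_{p'})$ is a mod $p$ homology equivalence. I would deduce this by considering the Borel construction $E\Sigma_p\times_{\Sigma_p} E(\Sigma_p,\mathcal{F}_{p'})$ together with its two projections. The projection to $B\Sigma_p$ is a fibration with contractible fiber $E(\Sigma_p,\mathcal{F}_{p'})$, hence an equivalence. The projection onto $B(\Sigma_p,\mathcal{F}_{p'})$ is a fibration whose fiber over a point $[y]$ is $B(\Sigma_p)_y$ for the stabilizer $(\Sigma_p)_y\in \mathcal{F}_{p'}$; since $(\Sigma_p)_y$ has order prime to $p$, $B(\Sigma_p)_y$ is $p$-locally contractible, and the second projection is a $p$-local equivalence. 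Composing these two projections gives the desired $p$-local equivalence $B\Sigma_p\to B(\Sigma_p,\mathcal{F}_{p'})$ and concludes the argument.
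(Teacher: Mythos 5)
Your proposal is correct, but it goes a different way than the paper after the shared first step. Both arguments rest on Lemma \ref{lemma:fixedpointsofaquotient} and, ultimately, on the single fact that nontransitive subgroups of $\Sigma_p$ have order prime to $p$; but the paper never identifies the individual pieces of the fixed-point space. It forms the mapping cone $X$ of $E_G\Sigma_p \rightarrow E_G\mathcal{F}$, observes that $X^{\Gamma_f}\simeq \star$ for every graph subgroup (both fixed-point spaces being contractible), and then disposes of the $\Sigma_p$-quotient of the acyclic wedge $\bigvee_f X^{\Gamma_f}$ in one stroke: all isotropy is nontransitive, hence of order prime to $p$, so the reduced $\F_p$-chains form a projective $\F_p[\Sigma_p]$-module and passing to coinvariants preserves acyclicity. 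You instead identify each summand explicitly --- $(E_G\Sigma_p)^{\Gamma_f}/C(f)\simeq BC_{\Sigma_p}(\im f)$ versus $(E_G\mathcal{F})^{\Gamma_f}/C(f)$, the quotient of a classifying space for the family of nontransitive subgroups of the centralizer --- carry out a case analysis on $f$ (your computation that $\langle \Gamma_f,(1,\sigma)\rangle$ meets $\Sigma_p$ in $\langle\sigma\rangle$, and that $C(f)\cong\Z/p$ for $f$ nontrivial, is correct), and reduce to the classical statement that $B\Sigma_p \rightarrow E\mathcal{F}/\Sigma_p$ (the $G=\{1\}$ case of $E_G\mathcal{F}$) is a mod $p$ homology equivalence, proved by comparing the Borel construction with the strict quotient. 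In effect you re-derive Equation \ref{equation:fixedpointseqclassifyingspace} and make the fixed-point spaces visible, at the cost of the case analysis; the paper's cone-plus-projective-chains argument is shorter and uniform in $f$.

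Two places where your write-up should be tightened, though neither is a real gap. First, the projection $E\Sigma_p\times_{\Sigma_p}E\mathcal{F} \rightarrow E\mathcal{F}/\Sigma_p$ is not literally a fibration (the ``fibers'' $B(\Sigma_p)_y$ vary with the isotropy); the correct justification is the standard fact that for a $\Sigma_p$-CW complex whose isotropy groups all have order prime to $p$, the map from the Borel construction to the strict quotient is an $\F_p$-homology isomorphism --- proved by induction over cells, or by exactly the projective-chains observation the paper uses. Second, one cannot literally ``compose'' the two projections out of the Borel construction; instead use the graph section $B\Sigma_p \rightarrow E\Sigma_p\times_{\Sigma_p}E\mathcal{F}$ determined by the equivariant inclusion $E\Sigma_p\rightarrow E\mathcal{F}$: it splits the first projection and its composite with the second projection is the natural map you need, so that map is a mod $p$ equivalence. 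With those phrasings repaired, your argument is complete.
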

\begin{proof}
Let $X$ denote the mapping cone of the inclusion $E_G\Sigma_p \rightarrow E_G\mathcal{F}$. By induction on the group $G$, it suffices to prove that $\tilde{H}_*((X/\Sigma_p)^G;\F_p)=0$. By Lemma \ref{lemma:fixedpointsofaquotient},
$$(X/\Sigma_p)^G \simeq (\bigvee\limits_{f:G \rightarrow \Sigma_p}X^{\Gamma_f})/\Sigma_p$$
But for an arbitrary subgroup $\Gamma\subset (G\times \Sigma_p)$,
$$X^{\Gamma}\simeq \begin{cases}
\star \hskip 0.2in \mathrm{if} \hskip 0.1in \Gamma\cap \Sigma_p=1\\
S^0 \hskip 0.1in \mathrm{if} \hskip 0.1in \Gamma\cap \Sigma_p \hskip 0.1in \mathrm{nontransitive}\, \mathrm{and} \, \mathrm{nonempty}\\
\star \hskip 0.2in \mathrm{if} \hskip 0.1in \Gamma\cap \Sigma_p \hskip 0.1in \mathrm{transitive}
\end{cases}$$
It therefore follows that $X^{\Gamma_f}\simeq \star$ for any graph subgroup $\Gamma_f$, and so $\tilde{C}_*(\bigvee\limits_{f:G \rightarrow \Sigma_p}X^{\Gamma_f};\F_p)$ is acyclic. Any point in the pointed $\Sigma_p$-space $\bigvee\limits_{f:G \rightarrow \Sigma_p}X^{\Gamma_f}$ has isotropy group nontransitive, and therefore $\tilde{C}_*(\bigvee\limits_{f:G \rightarrow \Sigma_p}X^{\Gamma_f};\F_p)$ is a projective $\F_p[\Sigma_p]$-module. It then follows that $\tilde{C}_*((\bigvee\limits_{f:G \rightarrow \Sigma_p}X^{\Gamma_f})/\Sigma_p;\F_p)$ is acyclic, as desired.

\end{proof}

\subsection{Equivalence of Classifying Spaces}
We prove a proposition analogous to a well-known nonequivariant statement, namely that the map $B_G\Aff_1 \rightarrow B_G\Sigma_p$ is a $p$-local equivalence on all fixed point spaces.

Let $G$ and $\Lambda$ be groups, and let $X$ be a $(G\times \Lambda)$-space. For any subgroup $\Psi\subseteq \Lambda$, let $C_{\Lambda}(\Psi)$ denote its centralizer. In (\cite{Sankar}, Definition 14), the following formula is given
$$(X\times_{\Lambda}E_G\Lambda)^G \simeq \coprod\limits_{[f]\in \mathrm{Hom}(G,\Lambda)/\Lambda}(X^{\Gamma_f})_{hC_{\Lambda}(\im f)}.$$
Note that this formula is a special case of Equation \ref{equation:fixedpointsofaquotient}. Specializing to the case $X=\star$, we deduce that
\begin{equation}\label{equation:fixedpointseqclassifyingspace}
(B_G\Lambda)^G\simeq \coprod\limits_{[f]\in \mathrm{Hom}(G,\Lambda)/\Lambda}BC_{\Lambda}(\im f).
\end{equation}

\begin{proposition}\label{prop:affsigma}
Let $G$ be a $p$-group. The $G$-space map $B_G\Aff_1 \rightarrow B_G\Sigma_p$ induced by the inclusion $\iota:\Aff_1 \hookrightarrow \Sigma_p$ is a $p$-local equivalence.
\end{proposition}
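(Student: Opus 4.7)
I would combine Equation \ref{equation:fixedpointseqclassifyingspace} with the classical nonequivariant fact that $B\Aff_1 \to B\Sigma_p$ is a mod $p$ cohomology equivalence. By Definition \ref{definition:plocal}, it suffices to show that for every subgroup $H \subseteq G$ the induced map on $H$-fixed points is an $\F_p$-homology equivalence. Since $H$ is itself a $p$-group, we can apply the same formula \ref{equation:fixedpointseqclassifyingspace} to both $(B_G\Aff_1)^H$ and $(B_G\Sigma_p)^H$, and the proof will reduce to comparing components indexed by conjugacy classes of homomorphisms.

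First I would observe that the indexing sets on the two sides coincide. Since $H$ is a $p$-group and $|\mathbb{F}_p^\times|$ is prime to $p$, every homomorphism $f:H \to \Aff_1 = \F_p \rtimes \F_p^\times$ factors through the Sylow subgroup $P := \F_p \subset \Aff_1$, giving $\Hom(H,\Aff_1) = \Hom(H,P)$. Every homomorphism $H \to \Sigma_p$ similarly has image either trivial or a Sylow $p$-subgroup of $\Sigma_p$; the Sylow subgroups are all conjugate, and fixing $P \subseteq \Aff_1 \subseteq \Sigma_p$ gives a bijection between nontrivial $\Sigma_p$-conjugacy classes of such homomorphisms and $N_{\Sigma_p}(P)/C_{\Sigma_p}(P) = \Aff_1/P = \F_p^\times$ orbits on nontrivial elements of $\Hom(H,P)$. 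On the other side, $\Aff_1$-conjugation on $\Hom(H,\Aff_1) = \Hom(H,P)$ factors through $\Aff_1/P = \F_p^\times$ because $P$ is abelian. Hence both indexing sets are the disjoint union of a single ``trivial homomorphism'' class with $(\Hom(H,P) \setminus \{1\})/\F_p^\times$.

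Next I would compute the centralizers component-by-component. For the trivial homomorphism, the centralizers are $\Aff_1$ and $\Sigma_p$, and the induced map between the associated components is $B\Aff_1 \to B\Sigma_p$. For a nontrivial $f:H \to P$, the image is all of $P$, and $C_{\Aff_1}(P) = P$ (because $\F_p^\times$ acts faithfully on $P$) and $C_{\Sigma_p}(P) = P$ (the centralizer of a $p$-cycle in $\Sigma_p$ is the cyclic group it generates). Thus the induced map on each nontrivial component is the identity map $BP \to BP$. So the map of $H$-fixed points is the wedge (disjoint union) of identities together with the single nontrivial map $B\Aff_1 \to B\Sigma_p$.

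The remaining step is to invoke the classical fact that $B\Aff_1 \to B\Sigma_p$ is a $p$-local equivalence. This is standard: since $\Aff_1 = N_{\Sigma_p}(P)$ is the normalizer of the Sylow $p$-subgroup of $\Sigma_p$, the transfer/Nishida--Swan argument gives a stable splitting $\Sigma^{\infty}B(\Sigma_p)_+ \simeq \Sigma^{\infty}B(\Aff_1)_+ \vee Y$ where $Y$ has no mod $p$ cohomology; equivalently, $H^*(B\Aff_1;\F_p) \cong H^*(BP;\F_p)^{\F_p^\times}$ recovers the well-known computation of $H^*(B\Sigma_p;\F_p)$. The main ``obstacle'' in the plan is purely bookkeeping --- namely, verifying that the component-wise matching of indexing sets is genuinely induced by the map $B_G\iota$ and not merely an abstract bijection; this follows by tracing through the proof of Equation \ref{equation:fixedpointseqclassifyingspace} in \cite{Sankar} and observing that the homeomorphism there is natural in the target group $\Lambda$.
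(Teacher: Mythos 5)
Your proposal is correct and follows essentially the same route as the paper: both use the fixed-point formula of Equation \ref{equation:fixedpointseqclassifyingspace} to decompose $(B_G\Aff_1)^H \to (B_G\Sigma_p)^H$ into components indexed by conjugacy classes of homomorphisms, identify the centralizers (giving identity maps $BP\to BP$ on the nontrivial components and $B\Aff_1\to B\Sigma_p$ on the trivial one), and conclude from the classical mod $p$ equivalence $B\Aff_1\to B\Sigma_p$, which the paper cites from Adem--Milgram while you rederive it via the Sylow-normalizer transfer argument. Your direct matching of the index sets as $\{1\}\sqcup(\Hom(H,P)\setminus\{1\})/\F_p^{\times}$ is in fact a slightly more careful version of the paper's shortcut of reducing to $G=\Z/p$, but it is the same argument in substance.
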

\begin{proof}
By induction on the group $G$, it suffices to check that the map of $G$-fixed points is an $\F_p$-homology isomorphism. The base case, $G=\{1\}$, is equivalent to proving that $B\iota:B\Aff_1\rightarrow B\Sigma_p$ is a mod $p$ homology equivalence. This is an immediate consequence of (\cite{AM}, Theorem 5.5).

Let $G$ be any $p$-group. By Equation \ref{equation:fixedpointseqclassifyingspace}, the map $(B_G\iota)^G:(B_G\Aff_1)^G \rightarrow (B_G\Sigma_p)^G$ is given by
$$(B_G\iota)^G:\coprod\limits_{[f]\in \mathrm{Hom}(G,\Aff_1)/\Aff_1}BC_{\Aff_1}(\im f) \rightarrow \coprod\limits_{[f]\in \mathrm{Hom}(G,\Sigma_p)/\Sigma_p}BC_{\Sigma_p}(\im f).$$
Every nontrivial homomorphism $G\rightarrow \Aff_1$ or $G \rightarrow \Sigma_p$ factors through a quotient of $G$ isomorphic to $\Z/p$, so we may assume $G=\Z/p$. We now describe conjugacy classes of homomorphisms from $\Z/p$ to each of $\Aff_1$ and $\Sigma_p$.
\begin{itemize}
\item The nontrivial homomorphisms $f: \Z/p \rightarrow \Aff_1$ are all conjugate, and each has centralizer equal to $\im(f)$. The trivial homomorphism has centralizer $\Aff_1$.
\item The nontrivial homomorphisms $f: \Z/p \rightarrow \Sigma_p$ are all conjugate, and each has centralizer equal to $\im(f)$. The trivial homomorphism has centralizer $\Sigma_p$.
\end{itemize}
Thus, the map $(B_{\Z/p}\iota)^{\Z/p}:(B_{\Z/p}\Aff_1)^{\Z/p} \rightarrow (B_{\Z/p}\Sigma_p)^{\Z/p}$ is given by
$$(B_{\Z/p}\iota)^{\Z/p}:B\Z/p \sqcup B\Aff_1 \rightarrow B\Z/p \sqcup B\Sigma_p,$$
The map on the first summand is the identity. The map on the second summand is $B\iota:B\Aff_1 \rightarrow B\Sigma_p$, which is an $\F_p$-homology isomorphism  (\cite{AM}, Theorem 5.5).

\end{proof}
\vskip 0.1in
{\bf Note:} Just as in the nonequivariant case, $\Sigma^{\infty G}(B_G\Sigma_p)_+$ is a stable summand of $\Sigma^{\infty G}(B_G\Z/p)_+$ with the inclusion map given by the transfer.

\section{Mod $p$ symmetric powers and Steinberg summands}
\label{sec:modpsymmetricpowersandSteinbergsummands}
Let $G$ be a $p$-group.
\begin{definition}\label{definition:M_G(n)}
For every $n\ge 0$, define the genuine $G$-spectrum $M_G(n)$ by
$$M_G(n) := S^{-n}\sm \Sp_{\Z/p}^{p^n}(\Sigma^{\infty G}S^0)/\Sp_{\Z/p}^{p^{n-1}}(\Sigma^{\infty G}S^0).$$
When $G$ is the trivial group, we simply write $M(n)$. Note that for any $i, j\ge 0$, the product maps on mod $p$ symmetric powers (Definition \ref{definition:modpsymmetricpowers}) give rise to product maps
$$M_G(i) \sm M_G(j) \rightarrow M_G(i+j).$$
\end{definition}

\begin{definition}\label{definition:genuineSteinbergsummand}
Recall from (\cite{Sankar}, Definition 12) that for any pointed $(G\times \GL_n)$-space $X$, its \emph{Steinberg summand} $e_nX$ is the na{\"i}ve $G$-spectrum
$$e_nX:=(\Sigma^{1-n}\Br_n^{\Diamond}\sm X)\sm_{\GL_n} (E_G\GL_n)_+.$$
Define the genuine $G$-spectrum $\mathbf{e}_nX$ by promoting $e_nX$ via Definition \ref{definition:promotingGspectra}, i.e.
$$\mathbf{e}_nX:=i_*(e_nX).$$
\end{definition}

We construct an equivalence between the genuine $G$-spectrum $M_G(n)$ and the Steinberg summand of the equivariant classifying space $B_G(\Z/p)_+^n$, i.e.
\setcounter{theorem}{0}
\begin{theorem}
For every integer $n\ge 1$, there is an equivalence of genuine $G$-spectra
$$M_G(n) \simeq \mathbf{e}_nB_G(\Z/p)_+^n.$$
\end{theorem}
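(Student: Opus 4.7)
The plan is an induction on $n$, following the strategy of Mitchell--Priddy captured in Diagram~(\ref{eqn:maindiagramintro}). The base case $n = 1$ is Proposition~\ref{prop:firstcofiber}, which identifies $M_G(1)$ with $\Sigma^{\infty G}(B_G\Aff_1)_+$; combined with the $p$-local equivalence $B_G\Aff_1 \simeq B_G\Sigma_p$ (Proposition~\ref{prop:affsigma}) and the identification of $B_G\Sigma_p$ as the Steinberg/transfer summand $\mathbf{e}_1 B_G(\Z/p)_+$, this gives $M_G(1) \simeq \Sigma \mathbf{e}_1 B_G(\Z/p)_+$ of genuine $G$-spectra. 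For the inductive step, I would construct the $G$-equivariant analogue of Diagram~(\ref{eqn:maindiagramintro}): exploit the retraction $\mathbf{e}_n B_G(\Z/p)^n_+ \hookrightarrow (\mathbf{e}_1)^{\boxtimes n} B_G(\Z/p)^n_+ \twoheadrightarrow \mathbf{e}_n B_G(\Z/p)^n_+$ arising from the factorization of the idempotent $\mathbf{e}_n$ through $\mathbf{e}_1 \otimes \cdots \otimes \mathbf{e}_1$ in $\Z_{(p)}[\GL_n(\F_p)]$, use the $n=1$ case to replace $(\mathbf{e}_1 B_G(\Z/p)_+)^{\sm n}$ by $\Sigma^{-n}(M_G(1))^{\sm n}$, and then compose with the iterated product $(M_G(1))^{\sm n} \to M_G(n)$ coming from the multiplicative structure on mod $p$ symmetric powers. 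The resulting zigzag is a map $\Sigma^n \mathbf{e}_n B_G(\Z/p)^n_+ \to M_G(n)$ whose $p$-local equivalence is the claim.

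A secondary induction on the $p$-group $G$ reduces the problem to showing that this zigzag is an $\F_p$-homology isomorphism after applying $\Phi^G$, since by induction it is already an equivalence on $i_H^*$ for every proper subgroup $H \subsetneq G$, so equivalence can be detected on $\Phi^G$ by Definition~\ref{definition:plocal}. On the target side, Propositions~\ref{prop:modpstablefinitesymmetricpowers} and~\ref{prop:decompositionafterstableprimitives} split $\Phi^G M_G(n)$ as a wedge indexed by $H \in \mathcal{C}$, each summand built from $\Sigma \Po(G)_{\supset H}^{\Diamond}$, $B(G/H)_+$, and a mod $p$ symmetric power factor of total weight $p^n$. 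After passing to $\Phi^G$, the bottom row of the diagram — the iterated product $(M_G(1))^{\sm n} \to M_G(n)/M_G(n-1)$ — decomposes into a sum over $n$-tuples $(H_1,\ldots,H_n) \in \mathcal{C}^n$ with $\bigcap_i H_i = H$. Proposition~\ref{prop:transversecase} describes the \emph{transverse} tuples explicitly via the subgroup-complex product on $\Sigma \Po(G)_{\supset H_i}^{\Diamond}$, while Proposition~\ref{prop:nontransversecase} kills every nontransverse contribution on $\F_p$-homology once we pass to the cofiber by $\Sp_{\Z/p}^{p^{n-1}}$. So on each $H$-wedge summand, only transverse $n$-tuples survive; equivalently, the map is controlled by ordered $n$-tuples of hyperplanes in $(G/H)^\vee$ which span the dual.

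The companion paper \cite{Sankar} computes $\Phi^G(\mathbf{e}_n B_G(\Z/p)^n_+)$ with a parallel wedge decomposition indexed by the same $H \in \mathcal{C}$, and the zigzag is transported by the sign/Steinberg idempotent $\mathbf{e}_n$. The main obstacle, and the content of this section, is the resulting linear-algebra step: one must show that the composite $f$ on each $H$-summand — the Steinberg idempotent $\mathbf{e}_n \in \F_p[\GL_n(\F_p)]$ post-composed with the iterated subgroup-complex product, summed over transverse tuples — is an isomorphism of $\F_p$-vector spaces. I would attack it by leveraging the rank-one idempotent property of $\mathbf{e}_n$ on the Steinberg module of $\GL_n(\F_p)$: choose an explicit Borel-flag model in which $\mathbf{e}_n$ is the sum over $\Sigma_n$ of a sign-weighted normalization of the upper-triangular subgroup, evaluate the product map against this basis to reduce the whole composite to a matrix indexed by ordered maximal $\F_p$-flags of $(G/H)^\vee$, and verify this matrix is conjugate to (a nonzero scalar multiple of) the Steinberg projector itself. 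Since the parallel decompositions on source and target yield $\F_p$-vector spaces of the same rank, nonvanishing on each summand upgrades to an isomorphism by dimension count, completing the induction and hence the theorem.
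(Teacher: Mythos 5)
Your architecture matches the paper's almost exactly: the double induction (on $n$ and on $G$), the zigzag through $(\mathbf{e}_1 B_G(\Z/p)_+)^{\sm n}$, the reduction to checking an $\F_p$-homology isomorphism on $\Phi^G$, the wedge decomposition of $\Phi^GM_G(n)$ over $H\in\mathcal{C}$ via Propositions~\ref{prop:modpstablefinitesymmetricpowers} and \ref{prop:decompositionafterstableprimitives}, and the transverse/nontransverse dichotomy handled by Propositions~\ref{prop:transversecase} and \ref{prop:nontransversecase}. (One bookkeeping slip: $M_G(1)\simeq\mathbf{e}_1B_G(\Z/p)_+$ with \emph{no} suspension --- the $\Sigma^{-n}$ is already built into the definition of $M_G(n)$ in Definition~\ref{definition:M_G(n)}, so your various stray $\Sigma$'s cancel but would confuse a careful reader.)

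The genuine gap is the final linear-algebra step, which is the real content of Section~\ref{subsec:Matriceswithtransverserownullspaces}. After translating everything into matrices, the claim to be proved is that the composition
$$e_n\!\!\bigoplus_{\Mat_{n,r}(V)}\!\!D \;\hookrightarrow\; \bigoplus_{\Mat_{n,r}(V)}\!\!D \;\xrightarrow{\proj_{\T(V)}}\; \bigoplus_{\Mat_{n,r}(V)}\!\!D \;\xrightarrow{e_n(-)}\; e_n\!\!\bigoplus_{\Mat_{n,r}(V)}\!\!D$$
is an isomorphism (Proposition~\ref{prop:Steinbergcomposition}). You propose to show the resulting matrix is conjugate to a nonzero scalar multiple of the Steinberg projector, implicitly invoking a Schur-type rigidity of $\mathbf{e}_n$. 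This does not go through: the projection $\proj_{\T(V)}$ is only $\Sigma_n$-equivariant, \emph{not} $\GL_n$-equivariant (the subset $\T(V)$ of matrices with exactly $s$ nonzero, independent rows is permuted by $\Sigma_n$ but not preserved by the full Borel or by $\GL_n$). So $e_n\proj_{\T(V)}e_n$ is not a $\GL_n$-module endomorphism of $e_nA$, Schur's lemma is unavailable, and there is no a priori reason for the composite to be a scalar. Moreover $A=\bigoplus_{\Mat_{n,r}(V)}D$ is far from irreducible, so even a module-map version of the claim would not immediately force a scalar. The paper instead proves injectivity directly: it passes to the conjugate idempotent $\hat{e}_n=\frac{1}{c_n}\overline{B}_n\overline{\Sigma}_n$, introduces the auxiliary subset $\Upsilon\subset\T(V)$ of matrices whose \emph{last} $s$ rows are nonzero, establishes the identity $\proj_{\Upsilon}\circ\overline{B}_n\circ\proj_{\T(V)}=\overline{B_{n-s}\times B_s}\circ\proj_{\Upsilon}$, shows $\proj_{\Upsilon}$ is injective on $\hat{e}_nN$ by a dimension count against $(\hat{e}_{n-s}\boxtimes\hat{e}_s)N$, and then assembles these into injectivity of the full composite. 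None of this is ``verify the matrix is a scalar''; it is a nontrivial combinatorial and representation-theoretic argument that your sketch does not reach.
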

\setcounter{theorem}{57}
We provide the proof here, referencing the supporting computational results proven in this section.
\begin{proof}
Proposition \ref{prop:firstcofiber} states that there is an equivalence of genuine $G$-spectra
$$M_G(1)\simeq \Sigma_+^{\infty G}B_G\Aff_1.$$
Because $\Br_1^{\Diamond}\simeq S^0$, it follows immediately from the definitions that there is an equivalence of genuine $G$-spectra $\mathbf{e}_1B_G(\Z/p)_+ \simeq \Sigma^{\infty G}(B_G\Aff_1)_+.$ Therefore, Proposition \ref{prop:firstcofiber} is the $n=1$ case of Theorem \ref{thm:maintheorem}, namely
\begin{equation}\label{eq:firstcofiber}
M_G(1) \simeq \mathbf{e}_1B_G\Z/p_+.
\end{equation}
The proof of Theorem \ref{thm:maintheorem} relies on the follow diagram
\begin{equation}\label{eq:maindiagram}
\xymatrix{\mathbf{e}_nB_G(\Z/p)_+^n\ar[r] & (\mathbf{e}_1B_G\Z/p_+)^{\sm n}\ar@{=}[r] & M_G(1)^{\sm n}\ar[r] & M_G(n)}
\end{equation}
where the first map is the inclusion of the Steinberg summand, the middle equivalence is from Equation \ref{eq:firstcofiber}, and the last map is the product map. In diagram \ref{eq:maindiagram} above, we wish to show that the composition is a $p$-local equivalence on all geometric fixed point spectra. By induction on the group $G$, it will suffice to check that it is a $p$-local equivalence on $G$-geometric fixed points, and this can be checked at the level of $\F_p$-homology. That is, we wish to show that the induced map
$$f: H_*(\Phi^G(\mathbf{e}_nB_G(\Z/p)_+^n); \F_p) \rightarrow H_*(\Phi^G(M_G(n)); \F_p)$$
is an isomorphism of graded $\F_p$-vector spaces. This is Corollary \ref{corollary:compositionisanisomorphismonhomology2}.
\end{proof}

The proof of Corollary \ref{corollary:compositionisanisomorphismonhomology2} may be outlined as follows. The geometric fixed point spectra $\Phi^G(\mathbf{e}_nB_G(\Z/p)_+^n)$ and $\Phi^G(M_G(n))$ have been given explicit decompositions (discussed in Section 6.1, Proposition \ref{prop:geometricfixedpointsequivalence}) indexed over the subgroups $H\in\mathcal{C}$. Thus in Section 6.3 we present bases for the two $\F_p$-vector spaces $H_*(\Phi^G(\mathbf{e}_nB_G(\Z/p)_+^n); \F_p)$ and $H_*(\Phi^G(M_G(n)); \F_p)$ and the map $f$ all in terms of matrices. Because the two groups $H_*(\Phi^G(\mathbf{e}_nB_G(\Z/p)_+^n); \F_p)$ and $H_*(\Phi^G(M_G(n)); \F_p)$ are abstractly isomorphic, it suffices to prove that $f$ has trivial kernel, which boils down to a linear algebra problem which is done in Section 6.2, Corollary \ref{corollary:compositionisanisomorphismonhomology}.

\subsection{$H$-summands}
\label{subsec:Hsummands}
Let $G$ be a $p$-group, and let $\mathcal{C}$ denote the poset of subgroups $H\unlhd G$ such that $G/H$ is an elementary abelian $p$-group (Definition \ref{definition:theposetC}).
\begin{definition}
For every $H\in\mathcal{C}$, let $d(H)$ denote the rank of $G/H$ as an $\F_p$-vector space. Note that two subgroups $H, K\in\mathcal{C}$ are transverse (Definition \ref{definition:transverse}) if and only if $d(H)+d(K)=d(H\cap K)$.
\end{definition}
Proposition \ref{prop:modpstablefinitesymmetricpowers} implies that, for every $N\ge 1$ there is an equivalence of spectra
$$\Phi^G\Sp_{\Z/p}^N(\Sigma^{\infty G}S^0)\simeq \bigvee\limits_{H\in\mathcal{C}}\Sp_{\Z/p}^{\lfloor N/p^{d(H)}\rfloor}(\Sigma^{\infty}S^0)\sm \Pri^{G/H}(S^{\infty\rreg_G})$$
which is suitably compatible with the inclusions $\Sp^{N-1}(-) \rightarrow \Sp^N(-)$. Taking $N=p^n$, we immediately deduce the formula
\begin{equation}\label{eqn:geomfixedptsM_G(n)}
\begin{aligned}
\Phi^GM_G(n)&\simeq \bigvee\limits_{H\in\mathcal{C}}M_G(n-d(H))\sm \Sigma^{-d(H)}\Pri^{G/H}(S^{\infty\rreg_G})\\
&\simeq \bigvee\limits_{H\in\mathcal{C}}M_G(n-d(H))\sm \Sigma^{1-d(H)}\Po(G)_{\supset H}^{\Diamond}\sm B(G/H)_+
\end{aligned}
\end{equation}
\begin{definition}\label{definition:H-summand}
Let $n$ be a positive integer and $H\in\mathcal{C}$ be a subgroup of $G$. The spectrum
$$M_G(n,H):=M(n-d(H))\sm \Sigma^{-d(H)}\Pri^{G/H}(S^{\infty\rreg_G})$$
is called the \emph{$H$-summand of $\Phi^GM_G(n)$.} For any two positive integers $m,n$ and subgroups $H,K\in\mathcal{C}$, there is a product map
$$M_G(m,H)\sm M_G(n,K) \rightarrow M_G(m+n,H\cap K)$$
which is determined by the product maps
$$\Phi^GM_G(m)\sm \Phi^GM_G(n) \rightarrow \Phi^GM_G(m+n).$$
\end{definition}

\begin{proposition}\label{prop:geometricfixedpointsequivalence}
Let $G$ be a $p$-group and let $n$ be any positive integer. For every subgroup $H\in\mathcal{C}$, there is an equivalence of $H$-summand spectra
\begin{equation}\label{eqn:geometricfixedptsequivalence}
M_G(n,H) \simeq E_n(H).
\end{equation}
where $E_n(H)$ is the $H$-summand of the fixed point spectrum $(e_nB_G(\Z/p)_+^n)^G$ (\cite{Sankar}, Definition 20).

Let $m, n$ be positive integers and suppose that $H,K\in\mathcal{C}$ are transverse. Then there is a commutative diagram
$$\xymatrix{M_{G,m}(H)\sm M_{G,n}(K) \ar[r]\ar@{=}[d] & \ar@{=}[d]M_{G,m+n}(H\cap K)\\
E_m(H)\sm E_n(K)\ar[r] & E_{m+n}(H\cap K)}$$
where the rows are the products on summands, and the columns are the equivalences just described.
\end{proposition}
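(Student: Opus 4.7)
The plan is to exhibit both $M_G(n, H)$ and $E_n(H)$ as the same smash product of a classical Mitchell--Priddy spectrum with a subgroup-complex factor and an equivariant classifying space. On the symmetric-power side, Definition \ref{definition:H-summand} combined with Proposition \ref{prop:stableprimitivespgroup} immediately yields
\[
M_G(n, H) \simeq M(n-d(H)) \wedge \Sigma^{1-d(H)} \Po(G)_{\supset H}^{\Diamond} \wedge B(G/H)_+.
\]
On the Steinberg side, the companion paper \cite{Sankar} computes $(e_n B_G(\Z/p)_+^n)^G$ by decomposing it according to graph subgroups $\Gamma_f \subset G \times (\Z/p)^n$, and the $H$-summand $E_n(H)$ records exactly those $f$ whose kernel is $H$. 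After applying the Steinberg idempotent and the Mitchell--Priddy equivalence $e_{n-d(H)}B(\Z/p)^{n-d(H)}_+ \simeq M(n-d(H))$, one obtains an identical smash product. The first step is to verify this match and align the indexing: homomorphisms $f: G \to (\Z/p)^n$ modulo $\GL_n$-conjugacy on one side versus the subgroup $H \in \mathcal{C}$ on the other, noting that the set of such $f$ with $\ker f = H$ is a $\GL_n$-torsor on the $\GL_{n-d(H)}$-classes of embeddings $G/H \hookrightarrow (\Z/p)^n$.

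For the transverse product square, the plan is to trace each composition through its explicit factorization and compare factor by factor. Proposition \ref{prop:transversecase} identifies the product
\[
\Pri^{G/H}(S^{\infty\rreg_G}) \wedge \Pri^{G/K}(S^{\infty\rreg_G}) \to \Pri^{G/(H\cap K)}(S^{\infty\rreg_G})
\]
with the subgroup-complex product of Definition \ref{definition:subgroupcomplexproduct} smashed with the diagonal $B(G/H)_+ \wedge B(G/K)_+ \to B(G/(H\cap K))_+$, while the remaining $M$-factor comes from the standard Mitchell--Priddy external product $M(m-d(H)) \wedge M(n-d(K)) \to M(m+n-d(H\cap K))$, whose target is well-defined precisely because transversality gives the numerical identity $d(H)+d(K) = d(H\cap K)$. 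On the $E$ side, the analogous decomposition from \cite{Sankar} describes the product as the external product of Steinberg idempotents $e_m \boxtimes e_n \hookrightarrow e_{m+n}$ combined with the very same subgroup-complex and classifying-space maps. Matching the three tensor factors gives commutativity.

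The main obstacle I anticipate is coherence: the pointwise equivalence of \eqref{eqn:geometricfixedptsequivalence} is a priori only defined up to an automorphism of each $M(n-d(H))$ factor, and one must pin these equivalences down uniformly in $n$ so that the transverse-product square commutes on the nose rather than only up to a nontrivial scalar. The cleanest way to enforce this is to build both equivalences from a common intermediate model---the filtration quotients of the symmetric powers applied directly to the equivariant Thom spectrum of $\rreg_G$ on $B_G(\Z/p)^n$---and then use the naturality of the external Steinberg product and Proposition \ref{prop:transversecase} in tandem, which forces any discrepancy to lie in the centre of the Steinberg idempotent and hence to be trivial.
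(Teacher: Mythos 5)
Your proposal follows essentially the same route as the paper: identify $M_G(n,H)$ with $E_n(H)$ by combining Proposition \ref{prop:stableprimitivespgroup}, the identification of $\Po(G)_{\supset H}$ with the flag complex (Tits building) of $G/H\cong(\Z/p)^{d(H)}$, and Mitchell--Priddy's Theorem A, and then match the transverse products factor by factor via Proposition \ref{prop:transversecase} and the external Steinberg product structure from \cite{Sankar}. The coherence worry in your last paragraph is handled in the paper without any intermediate Thom-spectrum model: when $G/H$ is elementary abelian the subgroup-complex product literally coincides with the flag-complex product, and the Mitchell--Priddy equivalence $M(n-d(H))\simeq e_{n-d(H)}B(\Z/p)_+^{n-d(H)}$ was constructed to respect products, so the equivalences already chosen make the square commute.
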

\begin{proof}
By Proposition \ref{prop:stableprimitivespgroup},
$$\Pri^{G/H}(S^{\infty\rreg_G})\simeq \Sigma \Po(G)_{\supset H}^{\Diamond}\sm B(G/H)_+.$$
When $G/H$ is elementary abelian, the subgroup complex $\Po(G)_{\supset H}$ is identical to the flag complex of the $\F_p$-vector space $G/H\cong (\Z/p)^{d(H)}$. By (\cite{MP}, Theorem A), the layer $M(n-d(H))$ is $p$-locally equivalent to $e_{n-d(H)}B(\Z/p)_+^{n-d(H)}$. Thus,
$$\begin{aligned}
M_G(n,H) &:= M(n-d(H))\sm \Sigma^{-d(H)}\Pri^{G/H}(S^{\infty\rreg_G})\\
&\simeq e_{n-d(H)}B(\Z/p)_+^{n-d(H)}\sm \Sigma^{1-d(H)}\Br_{d(H)}^{\Diamond}\sm B(G/H)_+\\
&=: E_n(H).
\end{aligned}$$
When $H$ and $K$ are transverse, the subgroup complex product (Definition \ref{definition:subgroupcomplexproduct}) is identical to the flag complex product (\cite{Sankar}, above Proposition 10). Therefore, by combining Proposition \ref{prop:transversecase} with (\cite{Sankar}, Proposition 21), we deduce that the equivalence $\Sigma^{-d(H)}\Pri^{G/H}(S^{\infty\rreg_G})\simeq \Sigma^{1-d(H)}\Br_{n-d(H)}^{\Diamond}\sm B(G/H)_+$ respects the products on both sides.

The equivalence $M(n-d(H))\simeq e_{n-d(H)}B(\Z/p)_+^{n-d(H)}$ constructed in (\cite{MP}, Theorem A) was built so as to respect the product structures on both sides. Therefore, the equivalence $M_G(n,H)\simeq E_n(H)$ respects the product structures on both sides.
\end{proof}
We note the following corollary.
\begin{corollary}\label{corollary:geomfixedptsequivalence}
There is an equivalence of spectra
$$\Phi^GM_G(n) \simeq (e_nB_G(\Z/p)_+^n)^G.$$
\end{corollary}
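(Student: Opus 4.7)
The plan is to assemble Proposition \ref{prop:geometricfixedpointsequivalence} over the indexing poset $\mathcal{C}$ into a single equivalence of spectra. By Proposition \ref{prop:modpstablefinitesymmetricpowers} applied to $X = S^0$ (or equivalently by Equation \ref{eqn:geomfixedptsM_G(n)}), there is a canonical wedge decomposition
$$\Phi^G M_G(n) \simeq \bigvee_{H \in \mathcal{C}} M_G(n, H).$$
From the companion paper \cite{Sankar}, and specifically from the definition of $E_n(H)$ (Definition 20 there) as the $H$-summand of $(e_n B_G(\Z/p)_+^n)^G$, there is an analogous wedge decomposition
$$(e_n B_G(\Z/p)_+^n)^G \simeq \bigvee_{H \in \mathcal{C}} E_n(H).$$

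Given these two decompositions, the plan is simply to apply Proposition \ref{prop:geometricfixedpointsequivalence} summand by summand. That proposition supplies an equivalence $M_G(n, H) \simeq E_n(H)$ for each $H \in \mathcal{C}$, and taking the wedge sum of these equivalences over $H \in \mathcal{C}$ yields the desired equivalence of spectra
$$\Phi^G M_G(n) \simeq (e_n B_G(\Z/p)_+^n)^G.$$

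Since the main computational content (identifying the individual summands and matching them under the equivalence with the Steinberg-type spectra $E_n(H)$) has already been carried out in Proposition \ref{prop:geometricfixedpointsequivalence}, there is essentially no obstacle remaining; the only point to verify is that the wedge-sum identifications on the two sides are compatible, which follows formally from the naturality of the summand decompositions used to define both $M_G(n,H)$ and $E_n(H)$. Thus the corollary requires no further argument beyond wedging over $\mathcal{C}$.
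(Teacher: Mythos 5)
Your proposal is correct and matches the paper's own proof: the paper likewise combines the wedge decomposition of $\Phi^GM_G(n)$ into the summands $M_G(n,H)$ (Equation \ref{eqn:geomfixedptsM_G(n)}), the decomposition of $(e_nB_G(\Z/p)_+^n)^G$ into the $E_n(H)$ from (\cite{Sankar}, Definition 20), and the summandwise equivalence of Proposition \ref{prop:geometricfixedpointsequivalence}. No further comment is needed.
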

\begin{proof}
Combine Equation \ref{eqn:geometricfixedptsequivalence}, Equation \ref{eqn:geomfixedptsM_G(n)}, and (\cite{Sankar}, Definition 20).
\end{proof}
\subsection{Matrices with transverse row nullspaces}
\label{subsec:Matriceswithtransverserownullspaces}

\begin{definition}\label{definition:Mat_{n,r}}
Let $n, r\ge 0$ be nonnegative integers. Then we write $\Mat_{n,r}=\mathrm{Hom}((\Z/p)^r,(\Z/p)^n)$ for the set of $n\times r$ matrices with entries in the field $\F_p$. For each subspace $V\subseteq (\Z/p)^r$, let $\Mat_{n,r}(V)\subset \Mat_{n,r}$ denote the set of $n\times r$ matrices with nullspace $V$.
\end{definition}
\begin{definition}\label{definition:matrixtransverse}
Let $\mathcal{T}\subset \Mat_{n,r}$ denote the set of $n\times r$ matrices with the following property: if a matrix $A\in\mathcal{T}$ has exactly $k$ nonzero rows for some $k$, then those nonzero row vectors are linearly independent. Let $\mathcal{T}(V)\subset \Mat_{n,r}(V)$ denote the intersection $\mathcal{T}(V)=\mathcal{T}\cap \Mat_{n,r}(V)$. The set $\mathcal{T}(V)$ is equivalently characterized as the set of $n\times r$ matrices with nullspace $V$ and exactly $s$ nonzero rows, where $\dim(V)=r-s$.
\end{definition}


The $\GL_n$-set $\Mat_{n,r}$ decomposes as
$$\Mat_{n,r}=\bigsqcup\limits_{V\subset (\Z/p)^r}\Mat_{n,r}(V).$$
Fix a subspace $V\subseteq (\Z/p)^r$, and write $\dim(V)=r-s$. Our goal in this section is to prove the Proposition \ref{prop:Steinbergcomposition} below.
\begin{proposition}\label{prop:Steinbergcomposition}
Let $D$ be any finite-dimensional $\F_p[\GL_n]$-module. Let $A$ denote the $\F_p[\GL_n]$-module $A=\bigoplus\limits_{\Mat_{n,r}(V)}D$. The composition
$$\xymatrix{e_nA\ar@{^{(}->}[r] & A\ar[rr]^{\proj_{\T(V)}} && A\ar[r]^{e_n(-)} & e_nA}$$
is a monomorphism of $\F_p$-vector spaces. Therefore, because the source and target have the same dimension, the composition above is an isomorphism.
\end{proposition}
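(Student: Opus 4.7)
Since the source and target of the composition are both $e_nA$, it suffices to prove injectivity. The key geometric observation is the following: identifying $\Mat_{n,r}(V)$ with the set of injections $(\Z/p)^s\hookrightarrow (\Z/p)^n$ (where $s=r-\dim V$), the subset $\T(V)$ consists precisely of those injections whose image is a coordinate subspace $\langle e_i : i\in I\rangle$ for some $I\in \binom{[n]}{s}$. Consequently $\T(V)$ is preserved by the monomial subgroup $N(T)=T\rtimes \Sigma_n\subseteq \GL_n$, so $\proj_{\T(V)}$ commutes with the action of $N(T)$ on $A$. This $N(T)$-equivariance is the main lever in the computation below.

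My plan is to exhibit a basis of $e_nA$ indexed naturally by $\T(V)$ (times a basis of $D$) with respect to which the composition is unitriangular. Stratify $\Mat_{n,r}(V)$ by the pivot subset $I(\bar M)\in \binom{[n]}{s}$ associated to $\mathrm{im}(\bar M)$ and the standard flag of $(\Z/p)^n$, and write $\T(V)=\bigsqcup_I \T(V)_I$ accordingly. Fix a basis $\{d_\alpha\}$ of $D$, and consider the elements $\beta_{M,\alpha}:=e_n(M\otimes d_\alpha)\in e_nA$ for $M\in \T(V)$. Using the explicit form of $e_n$ reviewed in \cite{Sankar} --- a nonzero $\F_p$-scalar times the product of the unipotent sum $\sigma=\sum_{u\in U}u$ and the Weyl alternator $\omega=\sum_{w\in \Sigma_n}\mathrm{sgn}(w)w$ --- every element of $e_nA$ can be expressed as a linear combination of the $\beta_{M,\alpha}$: $\sigma$ row-reduces an arbitrary matrix to a $\T(V)$-representative (plus lower-stratum corrections), and $\omega$ kills Weyl-conjugate duplicates. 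A dimension count for $e_n\,\mathrm{Ind}_P^{\GL_n}(\F_p)$, where $P$ is the pointwise stabilizer of an $s$-dimensional subspace, confirms that these elements form a basis rather than a mere spanning set.

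With this basis in hand I would compute $e_n\proj_{\T(V)}\beta_{M,\alpha}$ directly. Expanding $\beta_{M,\alpha}$ via the $\sigma\omega$ formula, each summand has the form $uwM\otimes uw\cdot d_\alpha$ for $(u,w)\in U\times \Sigma_n$. The projection $\proj_{\T(V)}$ retains only those $(u,w)$ for which $uwM\in \T(V)$, and by the $N(T)$-invariance of $\T(V)$ this forces $u$ to lie in the stabilizer $U\cap \mathrm{Stab}(wM)$. Re-applying $e_n$ collapses the remaining sum using $e_nu=e_n$ and $e_nw=\mathrm{sgn}(w)e_n$, yielding $e_n\proj_{\T(V)}\beta_{M,\alpha}=\mu_M\cdot \beta_{M,\alpha}+(\text{contributions from strata }I'<I)$ for a scalar $\mu_M\in \F_p$. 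The scalar $\mu_M$ is a unit modulo $p$ because it is a product of $|T|=(p-1)^n$ with orders of certain $p'$-subgroups of $U\cap \mathrm{Stab}(M)$, by the same mechanism that makes $e_n$ a nonzero idempotent in characteristic $p$.

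The principal obstacle will be the combinatorial bookkeeping in the previous paragraph. The hardest parts are expected to be (i) verifying that all off-diagonal terms genuinely lie in strictly lower Bruhat strata (so that the matrix is unitriangular in the ordering on $\binom{[n]}{s}$), and (ii) isolating the diagonal scalar and checking that it is nonzero modulo $p$. Both points mirror technical calculations in Mitchell--Priddy (\cite{MP}, Section 5) used for the nonequivariant layer computation, and should be amenable to a direct induction on the length of $I$ in the Bruhat order combined with explicit identifications of the relevant stabilizer subgroups of $U$. Once these are in place, the matrix of $e_n\proj_{\T(V)}$ in the basis $\{\beta_{M,\alpha}\}$ is unitriangular with unit diagonal, hence invertible, so the composition is a monomorphism and therefore an isomorphism.
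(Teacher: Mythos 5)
The decisive gap is your claimed basis. The family $\beta_{M,\alpha}=e_n(M\otimes d_\alpha)$, $M\in\T(V)$, has $|\T(V)|\cdot\dim_{\F_p}D=\binom{n}{s}|\GL_s|\cdot\dim_{\F_p}D$ members, but $\dim_{\F_p}(e_nA)$ is in general strictly smaller, so the dimension count you appeal to refutes linear independence rather than confirming it. Concretely, for $D=\F_p[\GL_n]$ the module $A$ is $\F_p[\GL_n]$-free of rank $|\Mat_{n,r}(V)|$, so $\dim_{\F_p}(e_nA)=p^{\binom{n}{2}}\prod_{i=0}^{s-1}(p^n-p^i)$, while $|\T(V)|\cdot\dim D=\binom{n}{s}|\GL_s|\,|\GL_n|$; already for $n=2$, $s=1$, $p=2$ these are $6$ and $12$. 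For $D=\F_p$ trivial, $n=s=1$, $p\ge 3$, all the $\beta_M$ are literally equal (since $e_1g=e_1$) although there are $p-1$ indices and $\dim e_1A=1$. Since unitriangularity is only meaningful relative to an actual basis, the strategy collapses: at best the $\beta_{M,\alpha}$ are a redundant spanning family, and even the spanning claim is not established by your sketch --- applying $\overline{B}_n$ to $M'\otimes d$ produces a sum over the entire Borel orbit, not a row reduction of $M'$, and the only evident proof of spanning is as a consequence of the proposition you are trying to prove.

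The second unproven point is the unit diagonal, which is where the actual difficulty of the statement sits. The $\GL_n$-stabilizer of $M\in\T(V)$ has order $p^{s(n-s)}|\GL_{n-s}|$, so the relevant unipotent stabilizers are nontrivial $p$-groups and coefficients obtained by summing over Borel cosets are prone to vanish mod $p$; asserting the scalar is a unit ``by the same mechanism that makes $e_n$ a nonzero idempotent'' is exactly the claim that needs an argument (and the remark about $p'$-subgroups of $U\cap\mathrm{Stab}(M)$ does not parse, since that intersection is a $p$-group). The paper avoids any such coefficient computation: it reduces to $D=\F_p[\GL_n]$, introduces the subset $\Upsilon\subset\T(V)$ of matrices whose nonzero rows are the last $s$, proves the exchange identity $\proj_{\Upsilon}\circ\overline{B}_n\circ\proj_{\T(V)}=\overline{B_{n-s}\times B_s}\circ\proj_{\Upsilon}$, shows $\proj_{\Upsilon}$ is injective on $\hat{e}_nN$ by identifying $\proj_{\Upsilon}(\hat{e}_nN)$ with $\proj_{\Upsilon}((\hat{e}_{n-s}\boxtimes\hat{e}_s)N)$ and counting dimensions, and then concludes using the mutually inverse isomorphisms $\overline{B}_n$ and $\overline{\Sigma}_n$ between $\hat{e}_nM$ and $e_nM$. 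Your correct observations --- that $\T(V)$ consists of the maps whose image is a coordinate subspace and is invariant under monomial matrices, in particular under $\Sigma_n$ --- are genuine ingredients of that argument, but they do not by themselves bridge either gap.
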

Before we discuss the proof of Proposition \ref{prop:Steinbergcomposition}, we record a corollary, which is used in the proof of Corollary \ref{corollary:compositionisanisomorphismonhomology2} and thus of Theorem \ref{thm:maintheorem}.
\begin{corollary}\label{corollary:compositionisanisomorphismonhomology}
Let $D$ denote the $\F_p[\GL_n]$ module $D=H_*(B(\Z/p)^n;\F_p)$. The following composition is an isomorphism
$$\xymatrix{e_n\bigoplus\limits_{\Mat_{n,r}}D \ar@{^{(}->}[r] & e_1^{\boxtimes n}\bigoplus\limits_{\Mat_{n,r}}D\ar[r]^{\proj_{\T}} & e_1^{\boxtimes n}\bigoplus\limits_{\Mat_{n,r}}D\ar[r] & e_n\bigoplus\limits_{\Mat_{n,r}}D}.$$
\end{corollary}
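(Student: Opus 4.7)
The plan is to reduce Corollary \ref{corollary:compositionisanisomorphismonhomology} to Proposition \ref{prop:Steinbergcomposition} by decomposing $\bigoplus_{\Mat_{n,r}} D$ according to nullspaces. The key observation will be that $\GL_n$ acts on $\Mat_{n,r}$ by left multiplication, and left multiplication by an invertible matrix preserves the nullspace of a matrix. Hence there is a $\GL_n$-equivariant partition $\Mat_{n,r} = \bigsqcup_{V} \Mat_{n,r}(V)$ indexed over subspaces $V \subseteq (\Z/p)^r$, giving a decomposition of $\F_p[\GL_n]$-modules
$$\bigoplus_{\Mat_{n,r}} D = \bigoplus_V \bigoplus_{\Mat_{n,r}(V)} D.$$

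Next I would check that each of the three arrows appearing in the composition respects this decomposition. The idempotent $e_n \in \F_p[\GL_n]$ does, simply because the summands above are $\F_p[\GL_n]$-submodules. The idempotent $e_1^{\boxtimes n}$ lies in the group algebra of the diagonal torus $T_n \subset \GL_n$, so it also respects the decomposition. Finally, $\proj_{\T}$ splits because $\T = \bigsqcup_V \T(V)$. A small but essential verification will be that $\proj_{\T}$ commutes with $e_1^{\boxtimes n}$; this amounts to the fact that elements of $T_n$ scale rows independently and therefore preserve the defining property of $\T$, namely linear independence of nonzero rows.

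Assembling these observations, the composition in the Corollary will split as a direct sum over $V$ of compositions
$$e_n \bigoplus_{\Mat_{n,r}(V)} D \hookrightarrow e_1^{\boxtimes n} \bigoplus_{\Mat_{n,r}(V)} D \xrightarrow{\proj_{\T(V)}} e_1^{\boxtimes n} \bigoplus_{\Mat_{n,r}(V)} D \rightarrow e_n \bigoplus_{\Mat_{n,r}(V)} D.$$
Since $e_n \cdot e_1^{\boxtimes n} = e_n$ and $e_n A \subseteq e_1^{\boxtimes n} A \subseteq A$ for any $\F_p[\GL_n]$-module $A$, the intermediate passage through $e_1^{\boxtimes n}(-)$ is cosmetic, and each $V$-summand coincides with the composition $e_n A \hookrightarrow A \xrightarrow{\proj_{\T(V)}} A \xrightarrow{e_n(-)} e_n A$ for $A = \bigoplus_{\Mat_{n,r}(V)} D$. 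By Proposition \ref{prop:Steinbergcomposition}, each such summand is an isomorphism, and taking the direct sum over $V$ completes the proof.

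The main content of the corollary lies in Proposition \ref{prop:Steinbergcomposition} itself, whose proof will require genuine linear algebra. By comparison the reduction carried out here is bookkeeping; the only genuinely substantive point is the compatibility of $\proj_{\T}$ with $e_1^{\boxtimes n}$, which follows from the torus invariance of $\T$. Once that point is noted, the corollary falls out of the Proposition by the decomposition argument above.
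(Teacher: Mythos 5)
Your proposal is correct and follows essentially the same route as the paper: the paper likewise reduces over the $\GL_n$-equivariant decomposition $\Mat_{n,r}=\bigsqcup_V\Mat_{n,r}(V)$ and then records, via a small commuting diagram, exactly your observation that the passage through $e_1^{\boxtimes n}$ is harmless, so that the composite agrees with the one in Proposition \ref{prop:Steinbergcomposition}. Your explicit remark that $\T$ is invariant under the diagonal torus (so $\proj_{\T}$ commutes with $e_1^{\boxtimes n}$) is a point the paper leaves implicit, but the argument is the same.
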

\begin{proof}
The decomposition $\Mat_{n,r}=\bigsqcup\limits_{V\subset (\Z/p)^r}\Mat_{n,r}(V)$ is $\GL_n$-equivariant, so it suffices to prove the same statement with $\Mat_{n,r}$ replaced by $\Mat_{n,r}(V)$. Next, we observe that the following diagram commutes:
$$\xymatrix{e_n\bigoplus\limits_{\Mat_{n,r}}D \ar@{^{(}->}[r]\ar@{^{(}->}[dr] & \bigoplus\limits_{\Mat_{n,r}}D\ar[r]^{\proj_{\T}} & \bigoplus\limits_{\Mat_{n,r}}D\ar[r] & e_n\bigoplus\limits_{\Mat_{n,r}}D\\
& e_1^{\boxtimes n}\bigoplus\limits_{\Mat_{n,r}}D\ar[r]^{\proj_{\T}}\ar@{^{(}->}[u] & e_1^{\boxtimes n}\bigoplus\limits_{\Mat_{n,r}}D\ar[ur]\ar@{^{(}->}[u] & }.$$
Now the proof statement is an immediate corollary of (\cite{Sankar}, Proposition \ref{prop:Steinbergcomposition}).
\end{proof}

We now prove Proposition \ref{prop:Steinbergcomposition}. We need some notation and lemmas.
\begin{definition}
Let $\Upsilon\subset \mathcal{T}(V)$ denote the subset of matrices whose last $s$ rows are nonzero. Let $N$ be the free $\F_p[\GL_n]$-module
$$N=\bigoplus\limits_{\Mat_{n,r}(V)}\F_p[\GL_n].$$
Associated to the subsets $\Upsilon \subset \T(V) \subset \Mat_{n,r}(V)$ are endomorphisms $\proj_{\T(V)}$ and $ \proj_{\Upsilon}$ of the vector space $N$ such that
$$\proj_{\T(V)}\circ\proj_{\Upsilon}=\proj_{\Upsilon}=\proj_{\Upsilon}\circ\proj_{\T(V)}.$$
\end{definition}

Recall from (\cite{Sankar}, Section 1) that $B_n$ (resp. $\Sigma_n$) denotes the group of invertible $n\times n$ upper triangular matrices (resp. permutation matrices). The elements $\overline{B}_n$ and $\overline{\Sigma}_n$ of the group algebra $\Z_{(p)}[\GL_n]$ define endomorphisms of the $\F_p[\GL_n]$-module $N$. The conjugate Steinberg idempotent $\hat{e}_n$ is the element of the group ring $\Z_{(p)}[\GL_n]$ defined by $\hat{e}_n=\frac{1}{c_n}\cdot\overline{B}_n\overline{\Sigma}_n$, where $c_n\in\Z_{(p)}^{\times}$.

Let $B_{n-s}\times B_s\subset B_n$ (resp. $\Sigma_{n-s}\times \Sigma_s$) denote the set of upper-triangular matrices (resp. permutation matrices) whose $ij$-th entry is zero whenever $1\le i\le n-s$ and $n-s+1\le j\le n$. Let $\hat{e}_{n-s}\boxtimes \hat{e}_s\in \F_p[\GL_n]$ denote the image of the idempotent element $\hat{e}_{n-s}\otimes \hat{e}_s\in \F_p[\GL_{n-s}]\otimes_{\F_p} \F_p[\GL_s]$ under the block inclusion.
\begin{lemma}
The following two endomorphisms of $N$ as an $\F_p$-vector space are equal:
$$\proj_{\Upsilon}\circ\overline{B}_n\circ\proj_{\T(V)} = \overline{B_{n-s}\times B_s}\circ\proj_{\Upsilon}.$$
\end{lemma}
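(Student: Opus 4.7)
The plan is to evaluate both endomorphisms on each basis vector $e_A$ of $N$ indexed by a matrix $A \in \Mat_{n,r}(V)$, reducing the identity to the combinatorial claim that the subset
$$S_A := \{b \in B_n : bA \in \Upsilon\}$$
of $B_n$ equals $B_{n-s} \times B_s$ when $A \in \Upsilon$, and is empty when $A \in \T(V) \setminus \Upsilon$. For $A \notin \T(V)$, both sides of the asserted equality vanish automatically since $\Upsilon \subset \T(V)$, so these two cases suffice. In the remaining cases the left side evaluates to $\sum_{b \in S_A} e_{bA}$ while the right side evaluates to $\sum_{b \in B_{n-s}\times B_s} e_{bA}$ (the latter being zero when $A \notin \Upsilon$), so the two set identities above imply the lemma.

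The key step is a rank argument showing $S_A = \emptyset$ whenever $A \in \T(V) \setminus \Upsilon$. Write the nonzero rows of $A$ as $A_{i_1}, \ldots, A_{i_s}$ with $i_1 < \cdots < i_s$; by definition of $\T(V)$ these are linearly independent. For any upper triangular $b$, row $k$ of $bA$ lies in the span of rows $k, k+1, \ldots, n$ of $A$, a subspace of dimension $|\{i_1, \ldots, i_s\} \cap \{k, \ldots, n\}|$. If $bA$ were in $\Upsilon$, its last $s$ rows would be linearly independent (since $bA$ has rank $s$), forcing this dimension at $k = n-s+1$ to be at least $s$; combined with $|\{i_1, \ldots, i_s\}|=s$ this gives $\{i_1, \ldots, i_s\} = \{n-s+1, \ldots, n\}$, i.e. $A \in \Upsilon$, a contradiction.

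For $A \in \Upsilon$, the nonzero (linearly independent) rows of $A$ are $A_{n-s+1}, \ldots, A_n$. For $k \leq n-s$, row $k$ of $bA$ equals $\sum_{j=n-s+1}^n b_{kj} A_j$, which vanishes iff $b_{kj} = 0$ for all $j \geq n-s+1$; demanding this for every $k \leq n-s$ is exactly the condition $b \in B_{n-s} \times B_s$. Conversely, any such $b$ restricts on its bottom $s \times s$ block to an invertible upper triangular matrix, so the last $s$ rows of $bA$ are an invertible change of basis of $\{A_{n-s+1}, \ldots, A_n\}$ and in particular nonzero, giving $bA \in \Upsilon$. The main obstacle is the rigidity in the rank argument above: upper triangular matrices can combine any row only with rows below it, so they cannot move a nonzero row at position $\leq n-s$ down into the last $s$ positions, and this is what forces $S_A = \emptyset$ whenever $A \notin \Upsilon$.
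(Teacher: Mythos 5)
Your proof is correct and is essentially the paper's argument: both reduce the operator identity to the combinatorial fact that for $A\in\T(V)$ and $b\in B_n$ one has $bA\in\Upsilon$ if and only if $A\in\Upsilon$ and $b\in B_{n-s}\times B_s$, which the paper asserts without detailed proof and you verify via the row-span/rank argument before evaluating both endomorphisms on basis elements. One cosmetic remark: since $N=\bigoplus_{\Mat_{n,r}(V)}\F_p[\GL_n]$, its $\F_p$-basis consists of elements $A\otimes x$ with $x\in\GL_n$ rather than just vectors $e_A$ indexed by matrices, but the extra coordinate transforms along equivariantly and is ignored by $\proj_{\T(V)}$ and $\proj_{\Upsilon}$, so your computation goes through unchanged.
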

\begin{proof}
Let $A$ be an $n\times r$ matrix in $\T(V)$ and let $b$ be an $n\times n$ invertible upper triangular matrix. Then
$$bA\in \Upsilon \iff (b\in B_{n-s}\times B_s \; \text{and} \; A\in \Upsilon).$$
It immediately follows that
$$\proj_{\Upsilon}\circ\overline{B}_n\circ\proj_{\T(V)} = \proj_{\Upsilon}\circ\overline{B_{n-s}\times B_s}\circ\proj_{\T(V)}.$$
The set $\Upsilon$ is preserved by the group $\GL_{n-s}\times \GL_s$, and so it follows that
$$\begin{aligned}
\proj_{\Upsilon}\circ\overline{B_{n-s}\times B_s}\circ\proj_{\T(V)}&= \overline{B_{n-s}\times B_s}\circ\proj_{\Upsilon}\circ \proj_{\T(V)}\\
&= \overline{B_{n-s}\times B_s}\circ\proj_{\Upsilon}.
\end{aligned}$$
\end{proof}
\begin{lemma}
The composition $\xymatrix{\hat{e}_nN\ar@{^{(}->}[r] & N\ar[r]^{\proj_{\Upsilon}} & N}$ is a monomorphism of $\F_p$-vector spaces.
\end{lemma}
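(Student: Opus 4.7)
The plan is to prove the lemma by establishing the operator identity
$$\hat{e}_n\circ \proj_{\Upsilon}\circ \hat{e}_n \;=\; \tfrac{c_{n-s}c_s}{c_n}\,\hat{e}_n$$
on $N$, where the scalar lies in $\F_p^*$. This exhibits a left inverse to $\proj_{\Upsilon}$ restricted to $\hat{e}_n N$, giving the desired injectivity immediately.

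The first step is to prove the pair of algebraic identities $\hat{e}_n\cdot(\hat{e}_{n-s}\boxtimes\hat{e}_s) = (\hat{e}_{n-s}\boxtimes\hat{e}_s)\cdot \hat{e}_n = \hat{e}_n$ in $\F_p[\GL_n]$. These follow from the Solomon--Tits identity $\overline{B_{n-s}\times B_s}\,\overline{\Sigma_{n-s}\times \Sigma_s}\,\overline{B_{n-s}\times B_s} = c_{n-s}c_s\,\overline{B_{n-s}\times B_s}$ for the Levi $\GL_{n-s}\times \GL_s$, combined with the factorization $\overline{B}_n = \overline{B_{n-s}\times B_s}\cdot \overline{U}$ (where $U$ is the unipotent radical of the standard parabolic with this Levi) and the analogous signed factorization $\overline{\Sigma}_n = T\cdot \overline{\Sigma_{n-s}\times \Sigma_s}$ for a signed transversal $T$ of $\Sigma_n/(\Sigma_{n-s}\times \Sigma_s)$.

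Next, I verify the operator identity on basis vectors $v_{A,g}$, split by the type of $A$. For $A\in \Upsilon$: the previous lemma together with the observation $\tau\Upsilon\cap \Upsilon = \emptyset$ for $\tau\in \Sigma_n\setminus(\Sigma_{n-s}\times \Sigma_s)$ yields
$$\proj_{\Upsilon}(\hat{e}_n v_{A,g}) \;=\; \tfrac{c_{n-s}c_s}{c_n}\,(\hat{e}_{n-s}\boxtimes\hat{e}_s)\,v_{A,g},$$
and applying $\hat{e}_n$ on the left together with the first algebraic identity above gives the required formula. For $A\in \T(V)\setminus \Upsilon$, I write $v_{A,g} = \sigma_0^{-1}v_{\sigma_0 A,\sigma_0 g}$ for a coset representative $\sigma_0$ with $\sigma_0 A\in \Upsilon$ and use $\hat{e}_n\sigma_0 = (-1)^{\sigma_0}\hat{e}_n$ (which is immediate from $\overline{\Sigma}_n\sigma = (-1)^{\sigma}\overline{\Sigma}_n$) to reduce to the previous case.

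The remaining case $A\notin \T(V)$ is the main technical obstacle. The plan is to show that $\hat{e}_n v_{A,g} = 0$, so the identity becomes trivially $0=0$. Since $A$ has more than $s$ nonzero rows but only rank $s$, its nonzero rows satisfy a nontrivial linear dependency; the idea is to promote this dependency to a fixed-point-free, sign-reversing involution on the index set of pairs $(b,\sigma)\in B_n\times \Sigma_n$ appearing in $\overline{B}_n\overline{\Sigma}_n v_{A,g}$, with paired terms contributing equal elements $v_{b\sigma A,\, b\sigma g}$ but opposite signs. Constructing this pairing explicitly—especially when $A$ has a mixture of zero rows and linearly dependent nonzero rows—is where the main combinatorial work of the proof lies.
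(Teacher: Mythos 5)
Your route differs from the paper's: you aim for a pointwise operator identity $\hat{e}_n\circ\proj_{\Upsilon}\circ\hat{e}_n=\tfrac{c_{n-s}c_s}{c_n}\hat{e}_n$ on $N$ (your case $A\in\Upsilon$ is essentially the paper's preceding lemma $\proj_{\Upsilon}\circ\overline{B}_n\circ\proj_{\T(V)}=\overline{B_{n-s}\times B_s}\circ\proj_{\Upsilon}$), whereas the paper never evaluates $\hat{e}_n$ on summands indexed by $A\notin\T(V)$ at all: it proves the subspace equality $\proj_{\Upsilon}(\hat{e}_nN)=\proj_{\Upsilon}((\hat{e}_{n-s}\boxtimes\hat{e}_s)N)$ and then compares $\F_p$-dimensions using the two counting formulas, which gives injectivity with no pointwise identity needed.

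The proposal has a genuine gap at exactly the step you defer: the claim that $\hat{e}_nv_{A,g}=0$ for $A\notin\T(V)$ is false, so the sign-reversing involution you plan to construct cannot exist. Indeed $N=\bigoplus_{\Mat_{n,r}(V)}\F_p[\GL_n]$ is a free module (the diagonal action is untwisted by $v_{A,g}\mapsto v_{g^{-1}A,g}$), and
$$\hat{e}_nv_{A,g}=\tfrac{1}{c_n}\sum_{b\in B_n,\ \sigma\in\Sigma_n}\mathrm{sgn}(\sigma)\,v_{b\sigma A,\ b\sigma g}.$$
If $b\sigma=b'\sigma'$ then $b'^{-1}b=\sigma'\sigma^{-1}$ is an upper-triangular permutation matrix, hence the identity; so the group elements $b\sigma$ are pairwise distinct, the coordinates $b\sigma g$ are pairwise distinct, and the sum consists of $|B_n|\cdot n!$ \emph{distinct} basis vectors each with coefficient $\pm c_n^{-1}\neq 0$ in $\F_p$. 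Thus $\hat{e}_nv_{A,g}\neq 0$ for every $A\in\Mat_{n,r}(V)$, including every $A\notin\T(V)$; there is literally no cancellation available to pair off. (For instance, with $p=2$, $n=2$, $r=1$, $V=0$, $A=(1,1)^{T}$ one finds $\hat{e}_2v_{A,1}$ is a sum of four distinct basis vectors.) Consequently, in the case $A\notin\T(V)$ your identity, if it holds at all, is an equality of two nonzero vectors and requires a genuine comparison of $\hat{e}_n\proj_{\Upsilon}\hat{e}_nv_{A,g}$ with $\hat{e}_nv_{A,g}$ --- which the proposal does not supply and which your stated strategy cannot be repaired to give. A secondary point to check if you pursue this route: you use the absorption identity $\hat{e}_n(\hat{e}_{n-s}\boxtimes\hat{e}_s)=\hat{e}_n$, while the factorization $\hat{e}_n=(\hat{e}_{n-s}\boxtimes\hat{e}_s)\overline{U}_{n-s,s}\overline{\Sigma_{\mathrm{shuf}}(n-s,s)}$ used in the paper gives absorption on the other side, so the order you need requires its own proof. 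The safer fix is to abandon the pointwise identity and argue as the paper does: establish $\proj_{\Upsilon}(\hat{e}_nN)=\proj_{\Upsilon}((\hat{e}_{n-s}\boxtimes\hat{e}_s)N)$ and conclude by the dimension count $\dim_{\F_p}\hat{e}_nN=\dim_{\F_p}\proj_{\Upsilon}((\hat{e}_{n-s}\boxtimes\hat{e}_s)N)$.
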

\begin{proof}
For any sub-$\F_p$-vector space $L\subseteq N$, we write $\proj_{\Upsilon}(L)$ to denote the image of $L$ under the endomorphism $\proj_{\Upsilon}$. There is an inclusion of $\F_p$-vector spaces
$$\proj_{\Upsilon}(\hat{e}_nN)\subseteq \proj_{\Upsilon}((\hat{e}_{n-s}\boxtimes \hat{e}_s)N).$$
We will prove that these two vector spaces are equal. Because the set $\Gamma\subset \Mat_{n,r}(V)$ is preserved by the subgroup $\GL_{n-s}\times \GL_s\subset \GL_n$, it follows that
$$\proj_{\Upsilon}((\hat{e}_{n-s}\boxtimes \hat{e}_s)N)=\proj_{\Upsilon}\proj_{\Upsilon}((\hat{e}_{n-s}\boxtimes \hat{e}_s)N)=\proj_{\Upsilon}((\hat{e}_{n-s}\boxtimes \hat{e}_s)\proj_{\Upsilon}N).$$
Therefore the $\F_p$-vector space $\proj_{\Upsilon}((\hat{e}_{n-s}\boxtimes \hat{e}_s)N)$ is spanned by elements of the form
$$\proj_{\Upsilon}((\hat{e}_{n-s}\boxtimes \hat{e}_s)(A\otimes x)), \hskip 0.3in A\in \Upsilon, x\in \GL_n.$$
Pick such an element. Let $b$ be any element of the group of block upper triangular matrices $B_{n-s}\times B_s$. Then for every $\sigma \in \Sigma_n$, we have
$$\sigma\cdot b\cdot A\in \Upsilon \iff \sigma\in \Sigma_{n-s}\times \Sigma_s.$$
Therefore, it follows that
$$\proj_{\Upsilon}(\overline{\Sigma_{n-s}\times \Sigma_s}\cdot\overline{B_{n-s}\times B_s}\cdot (A\otimes x))=\proj_{\Upsilon}(\overline{\Sigma}_n\cdot\overline{B_{n-s}\times B_s}\cdot (A\otimes x)).$$
Now let $b$ be any element of the group $B_n$ such that $b\notin B_{n-s}\times B_s$. Then it follows that $b\cdot A\notin \T(V)$. So for any permutation matrix $\sigma\in\Sigma_n$, we have $\sigma\cdot b\cdot A\notin \T(V)$. In particular, $\sigma\cdot b\cdot A\notin \Upsilon$. Therefore
$$\proj_{\Upsilon}(\overline{\Sigma}_n\cdot\overline{B_{n-s}\times B_s}\cdot (A\otimes x))=\proj_{\Upsilon}(\overline{\Sigma}_n\cdot\overline{B}_n\cdot (A\otimes x))=\proj_{\Upsilon}(\hat{e}_n(A\otimes x)).$$
We therefore conclude that
$$\proj_{\Upsilon}((\hat{e}_{n-s}\boxtimes \hat{e}_s)N)\subseteq \proj_{\Upsilon}(\hat{e}_nN),$$
and therefore the two $\F_p$-vector spaces above are equal.

We now prove that $\dim_{\F_p}(\hat{e}_nN)=\dim_{\F_p}(\proj_{\Upsilon}((\hat{e}_{n-s}\boxtimes \hat{e}_s)N))$. From this fact, we will conclude that $\hat{e}_nN$ and $\proj_{\Upsilon}(\hat{e}_nN)$ have the same dimension as $\F_p$-vector spaces, which completes the proof. Because $N=\bigoplus\limits_{\Mat_{n,r}(V)}\F_p[\GL_n]$ is a free $\F_p[\GL_n]$-module, we calculate
$$\begin{aligned}
\dim_{\F_p}(\hat{e}_nN)=\dim_{\F_p}(e_nN)&= \dim_{\F_p}(\St_n\otimes_{\GL_n}N)\\
&=\dim_{\F_p}(\St_n)\cdot |\Mat_{n,r}(V)|\\
&= \boxed{p^{\binom{n}{2}}\prod\limits_{i=0}^{s-1}(p^n-p^i)
}.\end{aligned}$$
Because $\proj_{\Upsilon}N=\bigoplus\limits_{\Upsilon}\F_p[\GL_n]$ is a free $\F_p[\GL_{n-s}\times \GL_s]$-module, we calculate
$$\begin{aligned}
\dim_{\F_p}(\proj_{\Upsilon}((\hat{e}_{n-s}\boxtimes \hat{e}_s)N))&= \dim_{\F_p}((\hat{e}_{n-s}\boxtimes \hat{e}_s)\proj_{\Upsilon}N)\\
&= \dim_{\F_p}(\St_{n-s}\otimes \St_s)\cdot |\Upsilon|\cdot \frac{|\GL_n|}{|\GL_{n-s}\times \GL_s|}\\
&= \dim_{\F_p}(\St_{n-s}\otimes \St_s)\cdot |\GL_s|\cdot \frac{|\GL_n|}{|\GL_{n-s}\times \GL_s|}\\
&= \boxed{p^{\binom{n-s}{2}+\binom{s}{2}}\cdot \frac{\prod\limits_{i=0}^{n-1}(p^n-p^i)}{\prod\limits_{i=0}^{n-s-1}(p^{n-s}-p^i)}}.
\end{aligned}$$
It is routine to check that the two boxed expressions are equal.
\end{proof}

\begin{proof}[Proof of Proposition \ref{prop:Steinbergcomposition}]
Any such $D$ receives a surjective map from a finite-dimensional free $\F_p[\GL_n]$-module. Therefore it suffices to consider the case $D=\F_p[\GL_n]$. In this case, $A$ is equal to the module $N=\bigoplus\limits_{\Mat_{n,r}(V)}\F_p[\GL_n]$ defined earlier.

Let $x\in N$ be any element such that $e_nx$ is nonzero. We wish to prove that $e_n(\proj_{\T(V)}(e_nx))$ is nonzero. We will prove that $\proj_{\Upsilon}(e_n(\proj_{\T(V)}(e_nx)))$ is nonzero.
$$\begin{aligned}
\proj_{\Upsilon}(e_n(\proj_{\T(V)}(e_nx)))&:= \proj_{\Upsilon}\overline{B}_n\overline{\Sigma}_n\proj_{\T(V)}\overline{B}_n\overline{\Sigma}_nx\\
&= \proj_{\Upsilon}\overline{B}_n\proj_{\T(V)}\overline{\Sigma}_n\overline{B}_n\overline{\Sigma}_nx \hskip 0.3in (\T(V) \text{ is } \Sigma_n \text{--invariant})\\
&= \overline{B_{n-s}\times B_s}\proj_{\Upsilon}\overline{\Sigma}_n\overline{B}_n\overline{\Sigma}_nx. \hskip 0.3in (\text{Lemma 3.3})
\end{aligned}$$
Recall from \ref{sec:defineSteinberg} that for any $\Z_{(p)}[\GL_n]$-module $M$, the linear operators
$$\overline{B}_n:\xymatrix{\hat{e}_nM\ar@/^1ex/[r]&  e_nM\ar@/^1ex/[l]}:\overline{\Sigma}_n$$
are inverse isomorphisms. Therefore, because the element $e_nx=\overline{B}_n\overline{\Sigma}_nx$ was assumed to be nonzero, it follows that the element $\overline{\Sigma}_n\overline{B}_n\overline{\Sigma}_nx$ is nonzero. Therefore, by Lemma 3.2, the element $\proj_{\Upsilon}\overline{\Sigma}_n\overline{B}_n\overline{\Sigma}_nx$ is nonzero.

The conjugate Steinberg idempotent $\hat{e}_n$ can be written in the form
$$\hat{e}_n=(\hat{e}_{n-s}\boxtimes \hat{e}_s)\overline{U}_{n-s,s}\overline{\Sigma_{\mathrm{shuf}}(n-s,s)}.$$
It therefore follows that the summand $\hat{e}_nN=\overline{\Sigma}_n\overline{B}_nN$ is a sub-$\F_p$-vector space of $(\hat{e}_{n-s}\boxtimes \hat{e}_s)N$. Therefore, the element $\overline{\Sigma}_n\overline{B}_n\overline{\Sigma}_nx$ is an element of the vector space $(\hat{e}_{n-s}\boxtimes \hat{e}_s)N$. The set $\Upsilon$ is preserved by the group $\GL_{n-s}\times \GL_s$, and so it follows that the endomorphisms $\proj_{\Upsilon}$ and $\hat{e}_{n-s}\boxtimes \hat{e}_s$ commute. Therefore, $\proj_{\Upsilon}\overline{\Sigma}_n\overline{B}_n\overline{\Sigma}_nx$ lies in the vector space $(\hat{e}_{n-s}\boxtimes \hat{e}_s)N$.

The linear operator
$$\overline{B_{n-s}\times B_s}:(\hat{e}_{n-s}\boxtimes \hat{e}_s)N \rightarrow (e_{n-s}\boxtimes e_s)N$$
is an isomorphism. Therefore, because $\proj_{\Upsilon}\overline{\Sigma}_n\overline{B}_n\overline{\Sigma}_nx$ is a nonzero element of the vector space $(\hat{e}_{n-s}\boxtimes \hat{e}_s)N$, we conclude that $\overline{B_{n-s}\times B_s}\proj_{\Upsilon}\overline{\Sigma}_n\overline{B}_n\overline{\Sigma}_nx$ is nonzero, as desired.
\end{proof}

\subsection{Proof of Theorem 1}
\label{subsec:proofofthemaintheorem}
Let $G$ be a $p$-group, and let $n, i$ be positive integers such that $n\ge i$. For any pointed $(G\times\GL_n)$-space $X$, there are inclusions and projections of Steinberg summands as defined in (\cite{Sankar}, Sections 2.2 and 2.3),
$$e_nX \rightarrow (e_i\boxtimes e_{n-i})X \hskip 0.2in \text{and} \hskip 0.2in (e_i\boxtimes e_{n-i})X \rightarrow e_nX.$$
We thus obtain an inclusion of na{\"i}ve $G$-spectra $e_nX \rightarrow e_1^{\boxtimes n}X$.

We now specialize to the case where $X$ is the $G$-equivariant classifying space with a disjoint basepoint, $X=B_G(\Z/p)_+^n$. In this situation, the fixed point spectrum $(e_nB_G(\Z/p)_+^n)^G$ was given a complete description in (\cite{Sankar}, Section 4.2). We express that description in terms of matrices for the purposes of our computation.
\begin{definition}
Let $G$ be a finite $p$-group. We denote by $F$ the minimal subgroup of $G$ such that $G/F$ is elementary abelian. We let $r$ denote the rank of $G/F$, and we fix an isomorphism $G/F\cong (\Z/p)^r$. The subgroups $H\in\mathcal{C}$ are in bijective correspondence with linear subspaces $V\subseteq (\Z/p)^r$, and we let $\cd(V)$ denote the codimension of $V$.
\end{definition}
Thus,
\begin{equation}\label{eqn:Steinbergfixedpointsmatrix}
\begin{aligned}
(e_nB_G(\Z/p)_+^n)^G &\simeq e_n\bigvee\limits_{\Hom(G,(\Z/p)^n)}B(\Z/p)_+^n\\
&\simeq e_n\bigvee\limits_{\Mat_{n,r}}B(\Z/p)_+^n
\end{aligned}
\end{equation}
There is an equivalence of $(\GL_1\times\cdots\times\GL_1)$-sets given by using the $i$-th input row vector as the $i$-th row of the output matrix for $i=1, 2, \ldots, n$,
$$\Mat_{1,r}\times\cdots\times\Mat_{1,r} \cong \Mat_{n,r}.$$
Then the Steinberg inclusion $(e_nB_G(\Z/p)_+^n)^G \rightarrow (e_1^{\boxtimes n}B_G(\Z/p)_+^n)^G$ is described by the inclusion of Steinberg summands
$$e_n\bigvee\limits_{\Mat_{n,r}}B(\Z/p)_+^n \rightarrow e_1^{\boxtimes}\bigvee\limits_{\Mat_{n,r}}B(\Z/p)_+^n\simeq (e_1\bigvee\limits_{\Mat_{1,r}}B(\Z/p)_+)^{\sm n}.$$
Expressed on the level of homology, the map $\tilde{H}_*(\Phi^G\mathbf{e}_nB_G(\Z/p)_+^n;\F_p) \rightarrow \tilde{H}_*(\Phi^G\mathbf{e}_1^{\boxtimes}B_G(\Z/p)_+^n;\F_p)$ is given by the inclusion
\begin{equation}\label{eqn:inclusionofSteinbergsummandsonhomology}
e_n\bigoplus\limits_{\Mat_{n,r}}H_*(B(\Z/p)^n;\F_p) \rightarrow e_1^{\boxtimes}\bigoplus\limits_{\Mat_{n,r}}H_*(B(\Z/p)^n;\F_p).
\end{equation}

The product $\tilde{H}_*(\Phi^GM_G(1)^{\sm n}) \rightarrow \tilde{H}_*(\Phi^GM_G(n))$ can also be described in this language. By Proposition \ref{prop:firstcofiber},
\begin{equation}\label{equation:homologyofM_G(1)^n}
\begin{aligned}
\tilde{H}_*(\Phi^GM_G(1)^{\sm n}) &\cong \tilde{H}_*(\Phi^G\mathbf{e}_1^{\boxtimes n}B_G(\Z/p)_+^n;\F_p)\\
&\cong e_1^{\boxtimes}\bigoplus\limits_{\Mat_{n,r}}H_*(B(\Z/p)^n;\F_p)\\
\end{aligned}
\end{equation}
And by Corollary \ref{corollary:geomfixedptsequivalence},

\begin{equation}\label{equation:homologyofM_G(n)}
\begin{aligned}
\tilde{H}_*(\Phi^GM_G(n)) &\cong \tilde{H}_*(\Phi^G\mathbf{e}_nB_G(\Z/p)_+^n;\F_p)\\
&\cong e_n\bigoplus\limits_{\Mat_{n,r}}H_*(B(\Z/p)^n;\F_p)
\end{aligned}
\end{equation}
\begin{proposition}\label{proposition:M_G(n)productonhomology}
Under the isomorphisms of Equations \ref{equation:homologyofM_G(1)^n} and \ref{equation:homologyofM_G(n)}, the product $\tilde{H}_*(\Phi^GM_G(1)^{\sm n}) \rightarrow \tilde{H}_*(\Phi^GM_G(n))$ on the layers in the mod $p$ symmetric powers is given by the composition
$$\xymatrix{e_1^{\boxtimes n}\bigoplus\limits_{\Mat_{n,r}}H_*(B(\Z/p)^n;\F_p)\ar[r]^{\proj_{\T}} & e_1^{\boxtimes n}\bigoplus\limits_{\Mat_{n,r}}H_*(B(\Z/p)^n;\F_p)\ar[r] & e_n\bigoplus\limits_{\Mat_{n,r}}H_*(B(\Z/p)^n;\F_p)}.$$
\end{proposition}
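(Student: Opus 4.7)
The plan is to analyze the product map summand-by-summand using the $H$-summand decomposition of Section \ref{subsec:Hsummands}. Both $\F_p$-homology groups in question decompose as direct sums over $\Mat_{n,r}$ organized by the nullspace $V \subseteq (\Z/p)^r$ corresponding to $H \in \mathcal{C}$ via $V = H/F$: the source $\tilde{H}_*(\Phi^G M_G(1)^{\sm n})$ splits over $n$-tuples $(H_1, \dots, H_n)$ with $d(H_i)\in\{0,1\}$, equivalently rows $a_1, \dots, a_n \in \Mat_{1,r}$, while the target $\tilde{H}_*(\Phi^G M_G(n))$ splits over single subgroups $H$, equivalently subspaces $V$. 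Definition \ref{definition:H-summand} guarantees that the source summand indexed by the matrix $A$ with rows $a_1, \dots, a_n$ maps to the target summand indexed by $V = \ker A$. The problem therefore reduces to identifying the product on each $A$-summand, and splits into two cases depending on whether $A\in\mathcal{T}$, equivalently whether $(H_1,\dots,H_n)$ is jointly transverse (the nonzero rows of $A$ are linearly independent, equivalently $d(H_1)+\cdots+d(H_n)=d(H_1\cap\cdots\cap H_n)$).

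\emph{Transverse case, $A\in\mathcal{T}$.} Iterated application of Proposition \ref{prop:transversecase} identifies the product on primitives
\[
\Pri^{G/H_1}(S^{\infty\rreg_G})\sm\cdots\sm\Pri^{G/H_n}(S^{\infty\rreg_G})\to\Pri^{G/H}(S^{\infty\rreg_G})
\]
with the subgroup complex product smashed with the classifying-space product. Combined with the product-compatible equivalences $M_G(k,H)\simeq E_k(H)$ of Proposition \ref{prop:geometricfixedpointsequivalence}, which in turn rely on Mitchell--Priddy, the product on the $A$-summand becomes literally the Steinberg inclusion $e_1^{\boxtimes n}\hookrightarrow e_n$ on the $A$-summand, which matches the second arrow in the claimed composition.

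\emph{Non-transverse case, $A\notin\mathcal{T}$.} I would show the product is zero on $\F_p$-homology by reducing to a non-transverse binary pair. Let $k$ be the largest index such that $(H_1,\dots,H_k)$ is jointly transverse; by maximality, the pair $(H_1\cap\cdots\cap H_k,H_{k+1})\in\mathcal{C}^2$ is non-transverse. Using associativity of the product on $\Sp_{\Z/p}^\bullet(\Sigma^{\infty G}S^0)$ and the compatible cofiber structure defining $M_G(\cdot)$, the $n$-fold product on this $A$-summand factors through the binary product
\[
M_G(k,\,H_1\cap\cdots\cap H_k)\sm M_G(1,H_{k+1})\to M_G(k+1,\,H_1\cap\cdots\cap H_{k+1})
\]
smashed with the identity on the remaining $n-k-1$ factors. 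Unpacking $M_G(k+1,\cdot)$ in terms of primitives and the Mitchell--Priddy quotient $\Sp_{\Z/p}^{p^{k+1}}/\Sp_{\Z/p}^{p^k}$, this binary product realizes the composition of Proposition \ref{prop:nontransversecase} (whose $\overline{\Sp}_{\Z/p}^{p^m}$ factor matches the top Mitchell--Priddy cofiber in $M_G(k+1)$), and is therefore zero on $\F_p$-homology.

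The main obstacle will be the non-transverse case: carefully extending the pairwise vanishing of Proposition \ref{prop:nontransversecase} to the $n$-fold product, and in particular arranging the associative factorization so that the non-transverse pair lands in a cofiber matching the $\overline{\Sp}_{\Z/p}^{p^m}$ appearing in that proposition rather than in an earlier, undivided, symmetric power. Once this bookkeeping is set up, the vanishing in the non-transverse case is immediate, and combining with the transverse case computation yields exactly the composition $e_1^{\boxtimes n}\xrightarrow{\proj_{\mathcal{T}}}e_1^{\boxtimes n}\to e_n$ claimed in the proposition.
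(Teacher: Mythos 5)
Your approach matches the paper's: decompose both sides by $H$-summands indexed over $\Mat_{n,r}$, observe that joint transversality of $(H_1,\dots,H_n)$ with $d(H_i)\le 1$ corresponds exactly to membership in $\T$, handle the transverse case via Proposition \ref{prop:transversecase} together with the product-compatibility in Proposition \ref{prop:geometricfixedpointsequivalence}, and handle the nontransverse case via Proposition \ref{prop:nontransversecase}. You are in fact more explicit than the paper in the nontransverse case: the paper simply cites the binary Proposition \ref{prop:nontransversecase} for the $n$-fold vanishing, while you make the reduction to a nontransverse pair $(H_1\cap\cdots\cap H_k,\,H_{k+1})$ (necessarily of defect $\mu=1$, since each $d(H_i)\le1$) explicit and correctly flag the quotient-matching as the real issue. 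One point worth naming precisely, which both you and the paper leave implicit: Proposition \ref{prop:nontransversecase} only tells you the product on primitives lands in $\Sp^{p^\mu-1}_{\Z/p}$ on homology, and after multiplying by $\Sp^{p^a}_{\Z/p}\sm\Sp^{p^b}_{\Z/p}$ this lands in $\Sp^{p^{a+b}(p^\mu-1)}_{\Z/p}$, which for $p\ge 3$ is strictly larger than $\Sp^{p^{a+b+\mu-1}}_{\Z/p}$. To conclude that the image dies in $M(a+b+\mu)=\Sp^{p^{a+b+\mu}}_{\Z/p}/\Sp^{p^{a+b+\mu-1}}_{\Z/p}$ one must invoke the Nakaoka-type fact that $\Sp^n_{\Z/p}/\Sp^{n-1}_{\Z/p}$ is $p$-locally contractible unless $n$ is a power of $p$ (so that $\Sp^{p^{a+b+\mu-1}}_{\Z/p}\hookrightarrow\Sp^{p^{a+b+\mu}-1}_{\Z/p}$ is a $p$-local equivalence). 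Stating that explicitly would close the gap you correctly identify as ``the main obstacle.''
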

\begin{proof}

The $\GL_1$-set $\Mat_{1,r}$ is a disjoint union $\Mat_{1,r}=\bigsqcup\limits_{V\subseteq (\Z/p)^r}\Mat_{1,r}(V)$, where $\Mat_{1,r}(V)$ is the set of $r$-dimensional row vector with nullspace $V$. Note that $\Mat_{1,r}(V)$ is empty unless $V$ has codimension 0 or 1. Let $H_1, \ldots, H_n\in\mathcal{C}$, and let $V_1, \ldots, V_n\subseteq (\Z/p)^r$ be the associated subspaces. There is an isomorphism by using the $i$-th input row vector as the $i$-th row of the output matrix for $i=1, 2, \ldots, n$,
$$\bigsqcup\limits_{V_1, \ldots, V_n\subseteq (\Z/p)^r}\Mat_{1,r}(V_1)\times\cdots\times \Mat_{1,r}(V_n) \cong \bigsqcup\limits_{V\subseteq (\Z/p)^r}\Mat_{n,r}(V).$$
which maps $\Mat_{1,r}(V_1)\times\cdots\times \Mat_{1,r}(V_n) \rightarrow \Mat_{n,r}(V_1\cap\cdots\cap V_n)$. Observe that collection of (codimension 0 or 1) subgroups $H_1, \ldots, H_n\in\mathcal{C}$ are transverse (Definition \ref{definition:transverse}) if and only if the matrices coming from this map are in $\T$ (Definition \ref{definition:matrixtransverse}).

Let $H\in\mathcal{C}$, and let $V\subseteq (\Z/p)^r$ be the associated linear subspace under the isomorphism $G/F\cong (\Z/p)^r$. By the definition of the spectrum $E_n(H)$ (\cite{Sankar}, Definition 20)
$$\begin{aligned}
\tilde{H}_*(E_n(H);\F_p) &\cong e_n\bigoplus\limits_{\Hom(G/H,(\Z/p)^n)}H_*(B(\Z/p)^n;\F_p)\\
&\cong e_n\bigoplus\limits_{\Mat_{n,r}(V)}H_*(B(\Z/p)^n;\F_p).
\end{aligned}$$

Under the isomorphism of Equation \ref{eqn:Steinbergfixedpointsmatrix}, the product of $H$-summands in the Steinberg projection $\tilde{H}_*(\Phi^G\mathbf{e}_1^{\boxtimes n}B_G(\Z/p)_+^n) \rightarrow \tilde{H}_*(\Phi^G\mathbf{e}_nB_G(\Z/p)_+^n)$, namely
\begin{equation}\label{eqn:productonE_n(H)}
\bigotimes\limits_{i=1}^{n}\tilde{H}_*(E_1(H_i);\F_p) \rightarrow \tilde{H}_*(E_n(H_1\cap\cdots\cap H_n);\F_p)
\end{equation}
is given by
$$\bigotimes\limits_{i=1}^{n}\left(e_1\bigoplus\limits_{\Mat_{1,r}(V_i)}H_*(B\Z/p;\F_p)\right) \rightarrow e_n\bigoplus\limits_{\Mat_{n,r}(V_1\cap\cdots\cap V_n)}H_*(B(\Z/p)^n;\F_p).$$

Let $H_1, \ldots, H_n\in\mathcal{C}$ be subgroups. The product $\tilde{H}_*(\Phi^GM_G(1)^{\sm n};\F_p) \rightarrow \tilde{H}_*(\Phi^GM_G(n);\F_p)$ can be described in terms of the maps on $H$-summands (Definition \ref{definition:H-summand})
\begin{equation}\label{eqn:M_G(n)product}
\bigotimes\limits_{i=1}^{n}\tilde{H}_*(M_G(1,H_i);\F_p) \rightarrow \tilde{H}_*(M_G(n,H_1\cap\cdots\cap H_n)).
\end{equation}
Note that $M_G(1,H_i)\simeq 0$ if $d(H)\ge 2$. If the subgroups $H_1, \ldots, H_n$ are transverse then by Proposition \ref{prop:geometricfixedpointsequivalence}, the map of Equation \ref{eqn:M_G(n)product} is given by Equation \ref{eqn:productonE_n(H)}. If the subgroups $H_1, \ldots, H_n$ are nontransverse then by Proposition \ref{prop:nontransversecase}, the the map of Equation \ref{eqn:M_G(n)product} is zero.
\end{proof}

\begin{corollary}\label{corollary:compositionisanisomorphismonhomology2}
The composition
$$\mathbf{e}_nB_G(\Z/p)_+^n \rightarrow (\mathbf{e}_1B_G(\Z/p)_+)^{\sm n} \simeq M_G(1)^{\sm n} \rightarrow M_G(n)$$
is an isomorphism after applying the functor $\tilde{H}_*(\Phi^G(-);\F_p)$.
\end{corollary}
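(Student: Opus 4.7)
The plan is to recognize the composition in question, after applying $\tilde{H}_*(\Phi^G(-);\F_p)$, as precisely the purely algebraic composition of Corollary \ref{corollary:compositionisanisomorphismonhomology}, which has already been shown to be an isomorphism. So almost no new work is required; the work lies in matching up the maps under the explicit identifications assembled in this section.

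First I would write down the three relevant homology groups as matrix-indexed Steinberg summands. Using Equation \ref{eqn:Steinbergfixedpointsmatrix} together with the identification $\Phi^G(\mathbf{e}_n Y) \simeq (e_n Y)^G$, we obtain
$$\tilde{H}_*(\Phi^G\mathbf{e}_nB_G(\Z/p)_+^n;\F_p) \cong e_n\bigoplus\limits_{\Mat_{n,r}}H_*(B(\Z/p)^n;\F_p),$$
while Equations \ref{equation:homologyofM_G(1)^n} and \ref{equation:homologyofM_G(n)} give the corresponding identifications for the middle and right-hand terms, namely $e_1^{\boxtimes n}\bigoplus_{\Mat_{n,r}}H_*(B(\Z/p)^n;\F_p)$ and $e_n\bigoplus_{\Mat_{n,r}}H_*(B(\Z/p)^n;\F_p)$ respectively. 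Set $D := H_*(B(\Z/p)^n;\F_p)$ to keep notation compact.

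Next I would identify each of the three maps in the composition under these identifications. The Steinberg inclusion $\mathbf{e}_nB_G(\Z/p)_+^n \to (\mathbf{e}_1B_G(\Z/p)_+)^{\sm n}$ becomes the inclusion of Steinberg summands of Equation \ref{eqn:inclusionofSteinbergsummandsonhomology}. The middle equivalence $(\mathbf{e}_1B_G(\Z/p)_+)^{\sm n}\simeq M_G(1)^{\sm n}$ is the $n$-th smash power of the $n=1$ case already established in Equation \ref{eq:firstcofiber}, and by construction it induces the identity map on $e_1^{\boxtimes n}\bigoplus_{\Mat_{n,r}}D$. Finally, Proposition \ref{proposition:M_G(n)productonhomology} identifies the product map $M_G(1)^{\sm n}\to M_G(n)$ on homology with the composition
$$e_1^{\boxtimes n}\bigoplus\limits_{\Mat_{n,r}}D \xrightarrow{\proj_{\T}} e_1^{\boxtimes n}\bigoplus\limits_{\Mat_{n,r}}D \xrightarrow{e_n} e_n\bigoplus\limits_{\Mat_{n,r}}D,$$
the key point being that the transverse contributions are computed by the subgroup complex product (Proposition \ref{prop:transversecase}, hence descend to an $e_n$-projection on the Steinberg summands) while the nontransverse contributions vanish on $\F_p$-homology by Proposition \ref{prop:nontransversecase} (giving the $\proj_{\T}$ step).

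Splicing these three identifications together, the map $f$ on $\tilde{H}_*(\Phi^G(-);\F_p)$ induced by the zigzag composition is precisely
$$e_n\bigoplus\limits_{\Mat_{n,r}}D \hookrightarrow e_1^{\boxtimes n}\bigoplus\limits_{\Mat_{n,r}}D \xrightarrow{\proj_{\T}} e_1^{\boxtimes n}\bigoplus\limits_{\Mat_{n,r}}D \xrightarrow{e_n} e_n\bigoplus\limits_{\Mat_{n,r}}D,$$
which is an isomorphism by Corollary \ref{corollary:compositionisanisomorphismonhomology} applied to $D = H_*(B(\Z/p)^n;\F_p)$. The only real subtlety is bookkeeping: one must check that the row-by-row identification $\Mat_{1,r}\times\cdots\times\Mat_{1,r}\cong\Mat_{n,r}$ used to identify the smash product with a single $\Mat_{n,r}$-indexed wedge is the same identification used in both Equation \ref{equation:homologyofM_G(1)^n} and in the definition of $\T$. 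This coherence, which is the main step where one could slip, is exactly what Proposition \ref{proposition:M_G(n)productonhomology} is set up to guarantee.
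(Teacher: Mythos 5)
Your proposal is correct and follows the same route as the paper: identify the Steinberg inclusion on homology via Equation \ref{eqn:inclusionofSteinbergsummandsonhomology}, identify the product map via Proposition \ref{proposition:M_G(n)productonhomology}, and conclude by applying Corollary \ref{corollary:compositionisanisomorphismonhomology} with $D=H_*(B(\Z/p)^n;\F_p)$. The extra bookkeeping you spell out (the row-by-row identification $\Mat_{1,r}\times\cdots\times\Mat_{1,r}\cong\Mat_{n,r}$ and the vanishing of nontransverse contributions) is exactly what the paper delegates to Proposition \ref{proposition:M_G(n)productonhomology}, so no new content is needed.
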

\begin{proof}
Equation \ref{eqn:inclusionofSteinbergsummandsonhomology} gives us a description of the first map, and Proposition \ref{proposition:M_G(n)productonhomology} gives us a description of the second map. Now apply Corollary \ref{corollary:compositionisanisomorphismonhomology}.
\end{proof}


\section{Splitting of the filtration}
\label{sec:splittingofthefiltration}
Recall from Section \ref{subsec:equivarianteilenbergmaclanespectra} that the mod $p$ symmetric powers of the equivariant sphere spectrum are a filtration for the equivariant Eilenberg-Maclane spectrum of $\underline{\F}_p$. Written below are only the stages at powers of $p$,
$$\Sigma^{\infty G}S^0=\Sp^1(\Sigma^{\infty G}S^0) \subset \Sp^p(\Sigma^{\infty G}S^0) \subset \Sp^{p^2}(\Sigma^{\infty G}S^0)\subseteq \cdots\subseteq \Sp^{\infty}(\Sigma^{\infty G}S^0)\simeq H\underline{\F}_p.$$
In this section, we give a short proof of Theorem \ref{thm:eqsplitting}
\setcounter{theorem}{1}
\begin{theorem}
Let $G$ be any finite $p$ group. The filtration $\{\Sp_{\Z/p}^{p^n}(\Sigma^{\infty G}S^0)\}_{n\ge 0}$ splits into its layers after smashing with $H\underline{\F}_p$. That is, there is an equivalence of $H\underline{\F}_p$-modules
$$H\underline{\F}_p\sm H\underline{\F}_p\simeq \bigvee\limits_{n\ge 0}H\underline{\F}_p\sm \Sigma^n \mathbf{e}_nB_G(\Z/p)^n_+.$$
\end{theorem}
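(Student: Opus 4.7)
My plan is to construct, for each $n\ge 1$, an $H\underline{\F}_p$-module section $s_n : H\underline{\F}_p\sm \Sigma^n\mathbf{e}_nB_G(\Z/p)^n_+ \to H\underline{\F}_p\sm \Sp_{\Z/p}^{p^n}(\Sigma^{\infty G}S^0)$ of the quotient map onto the $n$-th layer. Producing such sections causes each cofiber sequence in the filtration (after smashing with $H\underline{\F}_p$) to split as $H\underline{\F}_p$-modules, and passing to the colimit in $n$ then yields the desired wedge decomposition of $H\underline{\F}_p \sm H\underline{\F}_p$.

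For the base case $n = 1$, the inclusion $\Sigma^{\infty G}S^0 = \Sp_{\Z/p}^1 \hookrightarrow \Sp_{\Z/p}^p$, followed by $\Sp_{\Z/p}^p \hookrightarrow H\underline{\F}_p$, is the unit $\eta$ of the ring spectrum $H\underline{\F}_p$. After smashing with $H\underline{\F}_p$ on the left, the multiplication $\mu : H\underline{\F}_p\sm H\underline{\F}_p \to H\underline{\F}_p$ pre-composed with $\mathrm{id}\sm (\Sp_{\Z/p}^p \hookrightarrow H\underline{\F}_p)$ provides a retraction of $\mathrm{id}\sm \eta$, splitting the first cofiber sequence as $H\underline{\F}_p$-modules and producing $s_1$.

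For the inductive step, let $s_1^{\sm n}$ denote the $n$-fold iterated smash power of $s_1$ over $H\underline{\F}_p$, which is again an $H\underline{\F}_p$-module section of the corresponding smashed quotient. Define
\[ s_n \;:\; H\underline{\F}_p\sm \Sigma^n\mathbf{e}_nB_G(\Z/p)^n_+ \xrightarrow{\mathrm{id}\sm i_n} H\underline{\F}_p\sm (\Sigma\mathbf{e}_1B_G(\Z/p)_+)^{\sm n} \xrightarrow{s_1^{\sm n}} H\underline{\F}_p\sm (\Sp_{\Z/p}^p)^{\sm n} \xrightarrow{\mathrm{id}\sm m_n} H\underline{\F}_p\sm \Sp_{\Z/p}^{p^n}, \]
where $i_n$ is the Steinberg inclusion and $m_n$ is the $n$-fold mod $p$ symmetric power product. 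To verify this is a section, post-compose with the quotient $q_n : H\underline{\F}_p\sm \Sp_{\Z/p}^{p^n} \to H\underline{\F}_p\sm \Sigma^n\mathbf{e}_nB_G(\Z/p)^n_+$. Any tensor factor in $(\Sp_{\Z/p}^p)^{\sm n}$ coming from $\Sigma^{\infty G}S^0 \subset \Sp_{\Z/p}^p$ multiplies under $m_n$ into $\Sp_{\Z/p}^{p^{n-1}}$ and is therefore killed by $q_n$, so $q_n\circ s_n$ factors as $\mathrm{id}\sm (\bar m_n\circ i_n)$, where $\bar m_n$ is the induced product on associated graded layers. By Theorem \ref{thm:maintheorem} (the zigzag of Diagram \ref{eqn:maindiagramintro} in the equivariant setting), the composite $\bar m_n\circ i_n$ is an equivalence, so $q_n\circ s_n$ is as well.

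The main subtlety is verifying that $q_n\circ s_n$ is an equivalence, which is exactly where Theorem \ref{thm:maintheorem} enters crucially; once that identification is invoked, the rest of the argument is formal, matching the paper's description of this section as a short and straightforward argument.
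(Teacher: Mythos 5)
Your proposal is correct and follows essentially the same route as the paper: split the first stage using the unit and multiplication of $H\underline{\F}_p$, then define the $n$-th section as Steinberg inclusion followed by the smash power of the first-stage section and the symmetric power product, and check it is a section up to equivalence by factoring through the product on layers and invoking Theorem \ref{thm:maintheorem}. This matches the paper's construction of the maps $t_n$ and its verification via the zigzag of Diagram \ref{eqn:maindiagramintro}, so there is nothing further to add.
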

\setcounter{theorem}{71}
First, we need a lemma.

\begin{lemma}\label{lemma:splitfirststage}
The inclusion $\Sigma^{\infty G}S^0 \rightarrow \Sp_{\Z/p}^p(\Sigma^{\infty G}S^0)$ has a retraction after smashing with $H\underline{\F}_p$, namely
$$\xymatrix{\Sigma^{\infty G}S^0\sm H\underline{\F}_p \ar[r]& \Sp_{\Z/p}^p(\Sigma^{\infty G}S^0)\sm H\underline{\F}_p\ar@{-->}@/_1pc/[l]}.$$
\end{lemma}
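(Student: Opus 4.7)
The plan is to produce the retraction directly from the ring structure on $H\underline{\F}_p$. By Corollary \ref{cor:equivariantEilenbergMacLane}, there is an equivalence of genuine $G$-spectra $H\underline{\F}_p \simeq \Sp_{\Z/p}^{\infty}(\Sigma^{\infty G}S^0)$, and the multiplication maps on mod $p$ symmetric powers assemble into a ring structure on $H\underline{\F}_p$ whose unit $\eta: \Sigma^{\infty G}S^0 \to H\underline{\F}_p$ factors through every stage of the mod $p$ symmetric power filtration. In particular, writing $\iota: \Sigma^{\infty G}S^0 \hookrightarrow \Sp_{\Z/p}^p(\Sigma^{\infty G}S^0)$ for the inclusion in question and $j: \Sp_{\Z/p}^p(\Sigma^{\infty G}S^0) \hookrightarrow H\underline{\F}_p$ for the inclusion into the colimit, one has $j\circ \iota = \eta$.

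Given this, I would define the candidate retraction as the composition
$$r:\; \Sp_{\Z/p}^p(\Sigma^{\infty G}S^0)\sm H\underline{\F}_p \xrightarrow{\;j\sm\mathrm{id}\;} H\underline{\F}_p\sm H\underline{\F}_p \xrightarrow{\;\mu\;} H\underline{\F}_p \simeq \Sigma^{\infty G}S^0\sm H\underline{\F}_p,$$
where $\mu$ denotes the multiplication on $H\underline{\F}_p$ and the final equivalence is the canonical one. Then
$$r\circ (\iota\sm\mathrm{id}) \;=\; \mu\circ ((j\circ\iota)\sm\mathrm{id}) \;=\; \mu\circ (\eta\sm\mathrm{id}),$$
and the unit axiom for the ring spectrum $H\underline{\F}_p$ says precisely that $\mu\circ (\eta\sm\mathrm{id})$ is the canonical equivalence $\Sigma^{\infty G}S^0\sm H\underline{\F}_p \xrightarrow{\simeq} H\underline{\F}_p$, so $r$ is a retraction of $\iota\sm\mathrm{id}$.

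There is essentially no obstacle: the lemma is a formal consequence of the ring-spectrum structure on $H\underline{\F}_p$ and the compatibility of that structure with the filtration. The only point worth confirming carefully is that the multiplication inherited on $H\underline{\F}_p$ from the mod $p$ symmetric power model really does come from the natural transformation $\Sp_{\Z/p}^m(-)\sm \Sp_{\Z/p}^n(-) \rightarrow \Sp_{\Z/p}^{mn}(-)$, so that the inclusion $j$ is genuinely a map of modules over the unit; this is precisely the content of Corollary \ref{cor:equivariantEilenbergMacLane}.
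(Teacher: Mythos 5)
Your proposal is correct and matches the paper's own proof: both observe that the composite $\Sigma^{\infty G}S^0 \to \Sp_{\Z/p}^p(\Sigma^{\infty G}S^0) \to H\underline{\F}_p$ is the unit and then use the multiplication $\mu$ on $H\underline{\F}_p$ to produce the retraction $\mu\circ(j\sm\mathrm{id})$. The paper presents this as a commutative diagram rather than an equation, but the argument is identical.
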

\begin{proof}
The composition
$$\xymatrix{\Sigma^{\infty G}S^0 \ar[r] & \Sp_{\Z/p}^p(\Sigma^{\infty G}S^0)\ar[r] & \Sp_{\Z/p}^{\infty}(\Sigma^{\infty G}S^0)\simeq H\underline{\F}_p}$$
is the unit map. Therefore, after smashing with $H\underline{\F}_p$, and composing with the product map $\mu:H\underline{\F}_p \sm H\underline{\F}_p \rightarrow H\underline{\F}_p$, we have the commutative diagram
$$\xymatrix{\Sigma^{\infty G}S^0\sm H\underline{\F}_p\ar[r]\ar[rrd]_{\text{Id}} & (\Sp_{\Z/p}^p(\Sigma^{\infty G}S^0)\sm H\underline{\F}_p \ar[r]\ar@{-->}[rd] & H\underline{\F}_p\sm H\underline{\F}_p\ar[d]^{\mu}\\
&&H\underline{\F}_p}$$
The dotted map provides the splitting.
\end{proof}
Now Theorem \ref{thm:eqsplitting} is an immediate corollary of the following proposition.
\begin{exe}
There is a monomorphism (dotted) which splits the cofiber sequence shown.

$$\xymatrix{H\underline{\F}_p\sm \Sp_{\Z/p}^{p^{n-1}}(\Sigma^{\infty G}S^0)\ar[r] & H\underline{\F}_p\sm \Sp_{\Z/p}^{p^n}(\Sigma^{\infty G}S^0)\ar[r] & H\underline{\F}_p\sm \Sp_{\Z/p}^{p^n}(\Sigma^{\infty G}S^0)/\Sp_{\Z/p}^{p^{n-1}}(\Sigma^{\infty G}S^0)\ar@{-->}@/_1pc/[l]}$$
\end{exe}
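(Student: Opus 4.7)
My plan is to produce an explicit section of the quotient $H\underline{\F}_p\sm\Sp_{\Z/p}^{p^n}(\Sigma^{\infty G}S^0) \longrightarrow H\underline{\F}_p\sm Q_n$, where I abbreviate $Q_n := \Sp_{\Z/p}^{p^n}(\Sigma^{\infty G}S^0)/\Sp_{\Z/p}^{p^{n-1}}(\Sigma^{\infty G}S^0)$. The two ingredients are Lemma \ref{lemma:splitfirststage}, which handles the base $n=1$ case, and Theorem \ref{thm:maintheorem}, which (already established at this point in the paper) provides a section of the $n$-fold product on layers.

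Concretely, Lemma \ref{lemma:splitfirststage} yields a section $s\colon H\underline{\F}_p\sm Q_1\to H\underline{\F}_p\sm\Sp_{\Z/p}^p(\Sigma^{\infty G}S^0)$ of the projection to the first layer. Taking $n$ copies of $s$ and collapsing the resulting $H\underline{\F}_p^{\sm n}$ via the iterated ring multiplication produces a section
$$\sigma\colon H\underline{\F}_p\sm Q_1^{\sm n}\longrightarrow H\underline{\F}_p\sm (\Sp_{\Z/p}^p(\Sigma^{\infty G}S^0))^{\sm n}.$$
The iterated product $(\Sp_{\Z/p}^p)^{\sm n}\to\Sp_{\Z/p}^{p^n}$ sends a $\Sp_{\Z/p}^1$ in any factor slot into $\Sp_{\Z/p}^{p^{n-1}}$, so it descends to a natural map $\pi\colon Q_1^{\sm n}\to Q_n$, and by this compatibility the composite
$$H\underline{\F}_p\sm Q_1^{\sm n}\xrightarrow{\sigma}H\underline{\F}_p\sm (\Sp_{\Z/p}^p)^{\sm n}\longrightarrow H\underline{\F}_p\sm\Sp_{\Z/p}^{p^n}\longrightarrow H\underline{\F}_p\sm Q_n$$
agrees with $\mathrm{id}_{H\underline{\F}_p}\sm\pi$. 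By Theorem \ref{thm:maintheorem}, the identification $Q_n\simeq\Sigma^n\mathbf{e}_n B_G(\Z/p)_+^n$ together with the zigzag of Diagram \ref{eqn:maindiagramintro} exhibits a section $\iota\colon Q_n\to Q_1^{\sm n}$ of $\pi$. Pre-composing the above composite with $\mathrm{id}_{H\underline{\F}_p}\sm\iota$ yields a map $H\underline{\F}_p\sm Q_n\to H\underline{\F}_p\sm\Sp_{\Z/p}^{p^n}$ whose projection back to $H\underline{\F}_p\sm Q_n$ is $\mathrm{id}\sm(\pi\circ\iota)=\mathrm{id}$, providing the required splitting. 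Theorem \ref{thm:eqsplitting} then follows by iterating this splitting stage by stage.

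The main technical obstacle is the compatibility verification in the middle step: checking that $\sigma$, followed by the product on symmetric powers and the quotient to $Q_n$, reproduces $\mathrm{id}_{H\underline{\F}_p}\sm\pi$. This is a diagram chase, but care is needed in juggling the $H\underline{\F}_p$-module structure, the multiplicative filtration on the mod $p$ symmetric powers, and the various sub-spectra indexing the filtration on the $n$-fold smash product of $\Sp_{\Z/p}^p(\Sigma^{\infty G}S^0)$. Once that compatibility is in hand, the remainder is formal manipulation of sections and retractions.
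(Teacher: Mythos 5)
Your proposal is correct and follows essentially the same route as the paper: split the first stage via Lemma \ref{lemma:splitfirststage}, smash $n$ copies of that section together (collapsing the $H\underline{\F}_p$ factors by multiplication, exactly the paper's $t_1^{\sm n}$), feed the result into the product map on symmetric powers, and use the Theorem \ref{thm:maintheorem} zigzag through $\Sigma^n\mathbf{e}_nB_G(\Z/p)^n_+$ to see that the composite back to the quotient is an equivalence. The only cosmetic difference is that you normalize by precomposing with the inverse of that equivalence so the composite is literally the identity, whereas the paper is content to observe it is an equivalence.
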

\begin{proof}
Let $t_1$ denote the retraction map of Lemma \ref{lemma:splitfirststage}, i.e.
$$\xymatrix{\Sp_{\Z/p}^p(\Sigma^{\infty G}S^0)\sm H\underline{\F}_p \ar[r]&\Sp_{\Z/p}^p(\Sigma^{\infty G}S^0)/\Sp_{\Z/p}^1(\Sigma^{\infty G}S^0)\sm H\underline{\F}_p\ar@{-->}@/_1pc/[l]_{t_1}}.$$
Define the map $t_n$ by the composite in the top half of the diagram of $H\underline{\F}_p$-module spectra below,
$$\xymatrix{\Sp_{\Z/p}^{p^n}(\Sigma^{\infty G}S^0)/\Sp_{\Z/p}^{p^{n-1}}(\Sigma^{\infty G}S^0)\sm H\underline{\F}_p \ar@{=}[r]\ar@{-->}[dd]_{t_n} & \Sigma^n \mathbf{e}_nB_G(\Z/p)_+^n \sm H\underline{\F}_p\ar[d]^{\subset}\\
& (\Sigma \mathbf{e}_1B_G\Z/p_+)^{\sm n}\sm H\underline{\F}_p\ar[d]^{t_1^{\sm n}}\\
\Sp_{\Z/p}^{p^n}(\Sigma^{\infty G}S^0)\sm H\underline{\F}_p\ar[d] & (\Sp_{\Z/p}^p(\Sigma^{\infty G}S^0))^{\sm n}\sm H\underline{\F}_p\ar[l]\ar[d]\\
\Sp_{\Z/p}^{p^n}(\Sigma^{\infty G}S^0)/\Sp_{\Z/p}^{p^{n-1}}(\Sigma^{\infty G}S^0)\sm H\underline{\F}_p & \Sp_{\Z/p}^{p^n}(\Sigma^{\infty G}S^0)\sm H\underline{\F}_p\ar[l]}.$$
Since $t_1$ is a retraction, it follows that the composite map
$$\Sigma^n\mathbf{e}_nB_G(\Z/p)_+^n\sm H\underline{\F}_p \rightarrow \Sp_{\Z/p}^{p^n}(\Sigma^{\infty G}S^0)/\Sp_{\Z/p}^{p^{n-1}}(\Sigma^{\infty G}S^0)\sm H\underline{\F}_p$$
along the right and bottom of the commutative diagram, is an equivalence. Therefore, by commutativity, the composite along the left side of the diagram is an equivalence, and so $t_n$ is a retraction.
\end{proof}


\begin{thebibliography}{1}
\bibitem{AM} A. Adem, J. Milgram, \emph{Cohomology of Finite Groups}, Springer-Verlag Grundlehren \textbf{309} (2004)

\bibitem{AD} G. Arone, W. Dwyer, \emph{Partition Complexes, Tits Buildings and Symmetric Products}, Proceedings of the London Mathematical Society (3) \textbf{82} (2001), no. 1, pp. 229-256

\bibitem{ADL} G. Arone, W. Dwyer, K. Lesh, \emph{Bredon Homology of Partition Complexes}, Documenta Mathematica \textbf{21} (2016), pp. 1227-1268







\bibitem{DS} P. Dos Santos, \emph{A Note on the Equivariant Dold-Thom Theorem}, Journal of Pure and Applied Algebra \textbf{183} (2003), No. 1-3, pp. 299-312

\bibitem{MaG} J. P. C. Greenlees, J. P, May, \emph{Equivariant Stable Homotopy theory}, in Ioan James (ed.), \emph{Handbook of Algebraic Topology}, pp. 279-325 (1995)

\bibitem{GMM} B. Guillou, J. P. May, M. Merling, \emph{Categorical Models for Equivariant Classifying Spaces}, Algebraic and Geometric Topology \textbf{17} (2017), No. 5, pp. 2565-2602



\bibitem{HHR} M. Hill, M. Hopkins, D. Ravenel, \emph{On the Non-existence of Elements of Kervaire Invariant One}, Annals of Mathematics \textbf{184} (2016), No. 1, pp. 1-262

\bibitem{HK} P. Hu, I. Kriz, \emph{Real-oriented homotopy theory and an analogue of the Adams-Novikov spectral sequence}, Topology \textbf{40} (2001), 317-399


\bibitem{Ku} N. J. Kuhn, \emph{A Kahn-Priddy sequence and a conjecture of G. W. Whitehead}, Mathematical Proceedings of the Cambridge Philosophical Society \textbf{92}, 467-483 (1982)

\bibitem{Ku2} N. J. Kuhn, \emph{The Whitehead conjecture, the Tower of $S^1$ conjecture, and Hecke Algebras of type A}, Journal of Topology \textbf{8} (2015), Issue 1, pp. 118-146

\bibitem{Ku3} N. J. Kuhn, \emph{Spacelike resolutions of Spectra}, Proceedings of the Northwestern Homotopy Theory Conference (1982), Evanston, IL


\bibitem{KMP} N. J. Kuhn, S. Mitchell, S. Priddy, \emph{The Whitehead Conjecture and Splitting $B(\Z/2)^k$}, Bulletin of the American Mathematical Society \textbf{7} 255-258 (1982)

\bibitem{LF} P.C. Lima-Filho, \emph{On the equivariant homotopy of free abelian groups on $G$-spaces and $G$-spectra}, Mathematische Zeitschrift \textbf{224} (1997), 567-601

\bibitem{LSWX} G. Li, X. D. Shi, G. Wang, Z. Xu, \emph{Hurewicz Images of Real Bordism Theory and Real Johnson-Wilson Theories}, Advances in Mathematics \textbf{342} (2019), pp. 67-115

\bibitem{MC} M. C. McCord, \emph{Classifying Spaces and Infinite Symmetric Products}, Transactions of the American Mathematical Society, \textbf{146} (1969), 273-298


\bibitem{Mit} S. Mitchell, \emph{Finite complexes with $A(n)$-free cohomology}, Topology, \textbf{24} (1985), No. 2, pp. 227-248

\bibitem{MitCF} S. Mitchell, \emph{A Proof of the Conner-Floyd Conjecture}, American Journal of Mathematics \textbf{106} (1984), No. 4, pp. 889-891

\bibitem{MP} S. Mitchell, S. Priddy, \emph{Stable Splittings Derived from the Steinberg Module}, Topology \textbf{22} (1983), Issue 3, 285-298

\bibitem{Na} M. Nakaoka, \emph{Cohomology mod $p$ of symmetric products of spheres}, Journal of the Institute of Polytechnics Osaka City University Series A \textbf{9} (1958), No. 1, 1-18

\bibitem{Sankar} K. Sankar, \emph{Equivariant Steinberg Summands}, arXiv:1903.08246


\bibitem{Wel} P. Welcher, \emph{Symmetric fibre spectra and $K(n)$-homology acyclicity}, Indiana University Mathematics Journal \textbf{30} (1981), no. 6, 801-812


\end{thebibliography}
\end{document}